\numberwithin{equation}{section}
\newcommand{\red}[1]{#1}
\newcommand{\blue}[1]{#1}
\newcommand{\framed}[2]{\begin{center}\framebox{\begin{minipage}{#1} #2 \end{minipage}}\end{center}}
\newcounter{theorem}
\def\openthm#1#2{\refstepcounter{theorem}\bigskip

{\noindent\bf#1~\thetheorem\if#2!{. }\else{ (#2).}\fi}
\it}
\def\openrepthm#1#2{\bigskip

{\noindent\bf#1~#2{. }}
\it}
\def\thmskip{}
\newenvironment{theorem}[1][!]{\openthm{Theorem}{#1}}{\thmskip}
\newenvironment{lemma}[1][!]{\openthm{Lemma}{#1}}{\thmskip}
\newenvironment{corollary}[1][!]{\openthm{Corollary}{#1}}{\thmskip}
\newenvironment{definition}[1][!]{\openthm{Definition}{#1}}{\thmskip}
\newcounter{remark}
\def\openrem#1#2{\refstepcounter{remark}\bigskip
{\noindent \it \bfseries#1~\theremark\if#2!{. }\else{ (#2). }\fi}}
\newenvironment{remark}[1][!]{\openrem{Remark}{#1} \pushQED{\qed}}{\popQED}
\def\<{\langle}
\def\>{\rangle}
\def\R{\mathbb{R}}
\def\Q{\mathbb{Q}}
\def\N{\mathbb{N}}
\def\Z{\mathbb{Z}}
\def\Pr{\mathbb{P}}
\def\Ex{\mathbb{E}}
\def\Leb{\mathcal{L}}
\def\CC{{\rm C}}
\def\HH{{\rm H}}
\def\LL{{\rm L}}
\def\pot{{\rm pot}}
\def\per{{\rm per}}
\def\dd{{\rm d}}
\def\dx{\,\dd x}
\def\dq{\,\dd q}
\def\dt{\,\dd t}
\def\ds{\,\dd s}
\def\dPr{\,\dd \Pr}
\def\to{\rightarrow}
\def\sep{\,|\,}
\def\bsep{\,\b|\,}
\def\Bsep{\,\B|\,}
\DeclareMathOperator{\sgn}{sgn}
\def\eps{\varepsilon}
\def\b{\big}
\def\B{\Big}
\def\bg{\bigg}
\def\H{\mathcal{H}}
\def\Dom{\mathcal{D}}
\def\GDir{{\Gamma_{\mathrm{D}}}}
\def\GNeu{{\Gamma_{\mathrm{N}}}}
\def\Vel{\mathcal{V}}
\def\div{\mathrm{div}}
\def\HBC{\HH^1_\GDir}
\def\HBCPrime{\left(\HBC\!\right)'}
\def\L2Pot{\LL^2_\pot}
\def\dps{\displaystyle}
\begin{document}

\title{Stochastic homogenization of a scalar viscoelastic model exhibiting stress--strain hysteresis}
\author{Thomas Hudson$^{1}$, Fr\'ed\'eric Legoll$^{2,3}$ and Tony Leli\`evre$^{2,4}$
\\
{\footnotesize $^1$ Mathematics Institute, Zeeman Building, University of Warwick, Coventry, CV4 7AL, United Kingdom}
\\
{\footnotesize $^2$ CERMICS, \'Ecole des Ponts ParisTech, 77455 Marne-La-Vall\'ee Cedex 2, France}
\\
{\footnotesize $^3$ Laboratoire Navier, \'Ecole des Ponts ParisTech, 77455 Marne-La-Vall\'ee Cedex 2, France}
\\
{\footnotesize $^4$ Inria Paris, MATHERIALS project-team, 2 rue Simone Iff, CS 42112, 75589 Paris Cedex 12, France}
}

\thanks{The work of TH was funded by a public grant overseen by the French National Research Agency (ANR) as part of the ``Investissements d'Avenir'' program (reference: ANR-10-LABX-0098). TH was also funded by an Early Career Fellowship awarded by the Leverhulme Trust (ECF-2016-526). The work of FL and TL was supported by the European Research Council under the European Union's Seventh Framework Programme (FP/2007-2013) / ERC Grant Agreement number 614492.}
    
\begin{abstract}
Motivated by rate--independent stress--strain hysteresis observed in filled rubber, this article considers a scalar viscoelastic model in which the constitutive law is random and varies on a lengthscale which is small relative to the overall size of the solid. Using a variant of stochastic two--scale convergence as introduced by Bourgeat, Mikelic and Wright, we obtain the homogenized limit of the evolution, and demonstrate that under certain hypotheses, the homogenized model exhibits hysteretic behaviour which persists under asymptotically slow loading. These results are illustrated by means of numerical simulations in a particular one--dimensional instance of the model.
\end{abstract}


\maketitle

\section{Introduction}

Hysteresis is the phenomenon of ``history--dependence'' in a physical system. In mechanical systems, \emph{stress--strain hysteresis} occurs when the stress observed during loading depends on the path taken by the system in order to arrive at a particular strain, and not simply on the value of the strain itself. It follows that such stresses are non--conservative fields, i.e.~they cannot be directly expressed as the gradient of a potential energy function. In such systems, mechanical energy is dissipated through a thermodynamic process.
  
The Second Law of Thermodynamics dictates the most basic mechanism for dissipating mechanical energy in a solid: some of the work done on the solid must always be lost as heat, increasing the entropy in the system~\cite{TN04}. Other less ubiquitous mechanisms may involve the storage of mechanical energy through magnetic effects~\cite{LU00} or through molecular rearrangement~\cite{BJC95}. Typically, thermal dissipation depends on the work rate, while the latter examples involve a stress--induced phase transition in some order parameter of the system, and are hence \emph{rate--independent}.
  
Filled rubbers are a class of materials in which stress--strain hysteresis is observed in experiments, and persists at very low strain rates~\cite{L96,MF04}, indicating a rate--independent mechanism. Such rubbers include the most common varieties which are produced for commercial and industrial applications. Typically, they are composed of a rubber matrix containing microscopic ``filler'' particles, added to improve the mechanical properties of the material. The matrix is formed of polymer chains which are bonded both to the surface of the filler particles and to each other via sulphur bonds formed during the process of vulcanization. Viewed through a microscope, the filler particles are seen to form a complex random network throughout the material (see for example Figure~25 of~\cite{K03} and Figure~\ref{fig:microstructure} below).
  
At present, the mechanism which causes rate--independent stress--strain hysteresis in filled rubbers is not well understood. Our motivation here is to propose and mathematically study a class of simple viscoelastic models which exhibit the features observed experimentally, and reflect the random structure of the underlying material. In particular, we propose a micromechanical model in which energy is dissipated by frictional stresses acting to counter local deformation of the polymer matrix.
  
This model has two key features. First, the complex microstructure of the material is modelled by assuming that material parameters are random, depend upon the material point and vary on a lengthscale much smaller than the overall size of the body. Second, energy is assumed to be dissipated by internal frictional mechanisms which act to counter local changes in strain, rather than rigid body translations and infinitesimal rotations, in keeping with the physical assumption of \emph{frame indifference}. Other similar constitutive approaches in the literature include~\cite{L96,KR98,MK00,M07}. The mathematical framework we exploit is sufficiently general to allow us to incorporate a form of internal ``dry friction'', which leads to hysteretic behaviour which persists at arbitrarily small strain rates.

\smallskip

After presenting the constitutive assumptions made in this model, we prove the long--time existence of solutions under time--varying Dirichlet loading conditions imposed on part of the boundary, which reflect the boundary conditions used in standard material testing experiments. Our general mathematical approach to this problem is through the application of the theoretical tools developed to study doubly--nonlinear evolution equations, which are described in detail in~\cite{Roubicek}, and are studied with particular reference to the modelling of hysteresis in~\cite{Visintin}.
  
Our main result is then to obtain a homogenization result for this model in the limit where the lengthscale for microscopic variation versus body size vanishes. Technically--speaking, this result corresponds to a stochastic homogenization result for a random stationary doubly--nonlinear evolution problem. Greater spatial heterogeneity and differing assumptions on the dissipation mean that the resulting model lies in a distinct (but related) class of models to the Prandtl--Ishlinski\u{\i} rheological models studied in Chapters~III and~VII of~\cite{Visintin}. In the limiting model we derive, the effect of the spatial heterogeneity is tracked via a ``corrector field'' which describes the microscopic oscillations of the strain, and is characterised as the solution of an explicit differential inclusion.

The basis for our proof of this homogenization result is the theory of two--scale stochastic convergence developed in~\cite{BMW94}. This theory is inspired by the definition of two--scale convergence proposed by Nguetseng in~\cite{N89} and subsequently explored in detail in~\cite{Allaire94} in a deterministic setting. The theory is sufficiently general such that it may be viewed as including both periodic homogenization for deterministic systems and homogenization for random systems which are piecewise constant on a lattice with a random shift of the origin. Such cases form the particular examples constructed in Section~\ref{sec:examples}. We also illustrate our results in the particular case of a periodic setting in Section~\ref{sec:periodic}.

Recently, a general theoretical framework for evolutionary $\Gamma$--convergence has been developed to describe the convergence of doubly nonlinear evolution equations. The work~\cite{MielNotes} describes several general results under which ``strong'' convergence of an evolutionary semigroup may be deduced. Our analysis provides an example of a case in which a sequence of evolution semigroups converges only in a weak sense (see~\cite{T90} for an example of another such result) but the limit can still be identified. As such, our results lie outside the remit of the general theory of evolutionary $\Gamma$-convergence. Nevertheless, we make use of many of the ideas underlying the development of this framework.

At the time of submission, we became aware of related recent works on the homogenization of hysteretic systems~\cite{HS16,Heida17}. Our results differ from the setting of~\cite{Heida17} in that we consider systems driven by time--dependent boundary conditions, inspired by cyclic loading experiments, which leads us to consider time--dependent dissipation potentials. Further, we note that the technical tools we use differ: while the results in the latter reference are based on the construction of a Palm measure and a notion of two--scale convergence proposed by Zhikov and Pyatnitskii in~\cite{ZP06}, as mentioned above, we use the theory developed by Bourgeat, Mikelic and Wright in~\cite{BMW94}.

\smallskip
  
The article is organized as follows. Section~\ref{sec:constitutive} provides a detailed introduction to the model which we consider and a statement of the main results (in Section~\ref{sec:main}), as well as a numerical study in a simple one--dimensional case and some results demonstrating the qualitative properties of the model. This is intended to be as self--contained as possible for the reader less interested in the technical proofs of the subsequent existence and homogenization results.
  
Section~\ref{sec:setup} presents the mathematical background required to precisely state our results: the existence and uniqueness of a solution to the highly oscillatory evolution problem (Theorem~\ref{th:eps_existence}) and the identification of a homogenized limit (Theorem~\ref{th:main}). We also recall and expand some key aspects of the theory of stochastic two--scale convergence introduced in~\cite{BMW94}. The proof of Theorem~\ref{th:eps_existence} is given in Section~\ref{sec:wellposedness}, while Section~\ref{sec:limit} is devoted to the proof of Theorem~\ref{th:main}.
   
\section{Constitutive assumptions and main results}
\label{sec:constitutive}

Before presenting our model in its full generality in Section~\ref{sec:general}, we describe a simple particular case in Section~\ref{sec:InitialExample} and some illustrative numerical simulations in Section~\ref{sec:numerics}. We next state our main results in Section~\ref{sec:main}. Section~\ref{sec:hysteresis} discusses the hysteretic behaviour of our model.

\subsection{A simple illustrative example}
\label{sec:InitialExample}
  
We begin by formulating a simple one--dimensional case as an illustration of the more general model we subsequently consider. In this case, our model is closely related to a Prandtl--Ishlinski\u{\i} model of Stop--type, described in Chapter~III of~\cite{Visintin}. Let $\Dom = [0,1]$ be the reference configuration for a material undergoing a time--dependent deformation. The displacement is described by the function $y:\Dom\times[0,+\infty)\to\R$, where the second independent variable represents time. Here and throughout the rest of the article, we write $\dot{y}$ to denote the partial derivative of $y$ with respect to time $t$, and $D_xy$ to denote the gradient of $y$ with respect to the variable $x$, i.e. the strain. We consider the material under a loading experiment in which we assume that ``rigid'' boundary conditions are enforced, i.e. that $y(0,t) = 0$ and $y(1,t) = \ell(t)$ for any $t\in[0,T]$, where $\ell(t)$ is the elongation of the body (at time $t$, the length of the system is thus $1+\ell(t)$). The function $\ell$ is assumed to be periodic in time, as is common in experimental settings studying hysteresis.
  
We suppose that the internal stress is composed of three additive components: an elastic stress $\sigma_\text{e}$, a ``wet'' viscous frictional stress $\sigma_\text{w}$, and a ``dry'' frictional stress $\sigma_\text{d}$. The parameters relating these stresses to the current deformation are assumed to be random, and to vary rapidly on a lengthscale, denoted $\eps$, which is much shorter than the body itself. More precisely, at a given material point $x\in\Dom$ and time $t\in[0,\infty)$, the former two stress components are assumed to be functions of the strain and strain rate respectively, namely
\begin{equation*}
  \sigma_\text{e}(t,x) = A\left(\omega,\frac{x}{\eps}\right)D_xy(t,x)
  \quad\text{and}\quad
  \sigma_\text{w}(t,x) = \nu\left(\omega,\frac{x}{\eps}\right)D_x\dot{y}(t,x).
\end{equation*}
For the dry frictional stress, we assume that
\begin{equation*}
  \sigma_\text{d}(t,x) = \begin{cases}
    \dps \phantom{-}\mu\left(\omega,\frac{x}{\eps}\right)& \text{if $D_x\dot{y}(t,x)>0$},\\[3mm]
    \dps -\mu\left(\omega,\frac{x}{\eps}\right)& \text{if $D_x\dot{y}(t,x)<0$},
  \end{cases}
\end{equation*}
and if $D_x\dot{y}(t,x)=0$, then $\sigma_\text{d}(t,x)$ takes on some value in $\dps \left[-\mu\left(\omega,\frac{x}{\eps}\right),\mu\left(\omega,\frac{x}{\eps}\right)\right]$ in order to satisfy a force balance. Here, $\omega\in\Omega$ denotes a realization of the constitutive relation drawn from an appropriate probability space.
  
These constitutive assumptions are motivated by our understanding of the microstructure of filled rubber, illustrated on Figure~\ref{fig:microstructure} below. As mentioned in the introduction, filled rubbers are made up of two principal components: large filler particles and a polymer matrix. Elastic stresses are induced by the polymer matrix acting to increase the entropy of the polymer chains at fixed temperature~\cite{Treloar}. The ``wet'' frictional stress corresponds to the action of viscous dissipation via thermal vibration of the polymer matrix, while the ``dry'' frictional stress represents the opposition to motion by friction between the filler particles and the polymer chains as they move in order to allow deformation. We also note that, since all stresses depend only on the strain and strain rate, they are frame independent: rigid body motions (i.e. translations in this one--dimensional setting) do not affect their definition.
  
\begin{figure}[tbp]
  \includegraphics[width=0.6\textwidth]{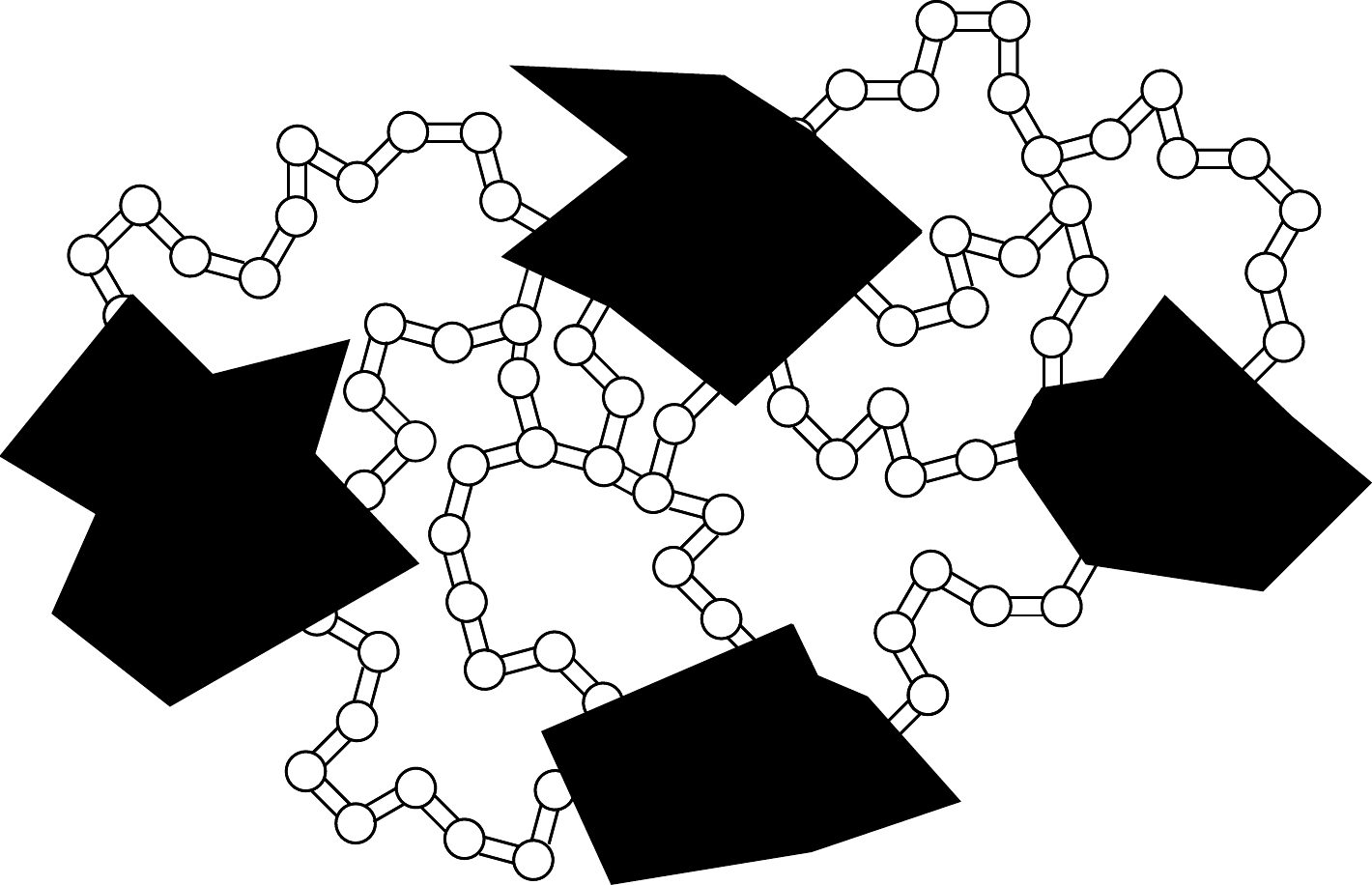}
  \caption{Schematic representation of the microscopic structure of filled rubber. Large black particles are filler, white particles represent the rubber matrix. \label{fig:microstructure}}
\end{figure}
  
If the material undergoes slow loading, so that inertial effects may be neglected, it follows that at all times the net force on each material point vanishes. Hence, in an appropriately weak sense, we suppose that the internal stresses satisfy the force balance
\begin{equation}
  -\div\b(\sigma_\text{e}(t,x) +\sigma_\text{w}(t,x)+\sigma_\text{d}(t,x)\b)=0\quad\text{for any $t\in[0,T]$}.
  \label{eq:1DForceBalance}
\end{equation}
In one dimension, any divergence--free field is constant, so~\eqref{eq:1DForceBalance} entails that there exists a (random) function $\overline{\sigma}(t)$ such that
\begin{equation}
\forall x \in \Dom, \quad \sigma_\text{e}(t,x)+\sigma_\text{w}(t,x) + \sigma_\text{d}(t,x)=\overline{\sigma}(t).
\label{eq:1DForceBalance_bis}
\end{equation}
We may then write
\begin{align}
  A\left(\omega,\frac{x}{\eps}\right) D_xy +\nu\left(\omega,\frac{x}{\eps}\right) D_x\dot{y}+\mu\left(\omega,\frac{x}{\eps}\right) &=\overline{\sigma}(t) \quad\text{when $D_x\dot{y}>0$},
  \nonumber
  \\
  A\left(\omega,\frac{x}{\eps}\right) D_xy+\sigma_\text{d}(t,x) &=\overline{\sigma}(t) \quad\text{when $D_x\dot{y}=0$},
  \label{eq:StressBalance}
  \\
  A\left(\omega,\frac{x}{\eps}\right) D_xy +\nu\left(\omega,\frac{x}{\eps}\right) D_x\dot{y}-\mu\left(\omega,\frac{x}{\eps}\right) &=\overline{\sigma}(t) \quad\text{when $D_x\dot{y}<0$}.
  \nonumber
\end{align}
  
We now recast~\eqref{eq:StressBalance} into two equivalent, but more mathematically convenient forms, which turn out to be general enough to allow us to prove convergence results. Let us define an elastic energy density $W(\omega,x,\xi)$ and a dissipation potential density $\psi(\omega,x,\xi)$ via
\begin{equation*}
  W(\omega,x,\xi) := \frac12 \, A(\omega,x) \, \xi^2 \quad \text{and} \quad \psi(\omega,x,\xi) := \frac12 \, \nu(\omega,x) \, \xi^2 + \mu(\omega,x) \, |\xi|.
\end{equation*}
We note that $\sigma_\text{e}(t,x) = D_\xi W\b(\omega,x/\eps,D_xy(t,x)\b)$. Furthermore, $\psi$ is convex in $\xi$ and hence has a subdifferential, denoted $\partial_\xi\psi(\omega,x/\eps,\xi)$, which is
\begin{equation*}
  \partial_\xi\psi(\omega,x/\eps,\xi) = \begin{cases}
    \dps \left\{\nu\left(\omega,\frac{x}{\eps}\right)\xi + \mu\left(\omega,\frac{x}{\eps}\right)\right\} & \text{when $\xi>0$},\\[2mm]
    \dps \phantom{\{}\left[-\mu\left(\omega,\frac{x}{\eps}\right),\mu\left(\omega,\frac{x}{\eps}\right)\right] & \text{when $\xi=0$},\\[2mm]
    \dps \left\{\nu\left(\omega,\frac{x}{\eps}\right)\xi - \mu\left(\omega,\frac{x}{\eps}\right)\right\} & \text{when $\xi<0$}.
  \end{cases}
\end{equation*}
It follows that the frictional stresses satisfy the inclusion $\sigma_\text{w}(t,x)+\sigma_\text{d}(t,x) \in \partial_\xi\psi(\omega,x/\eps,D_x\dot{y}(t,x))$, and we may express~\eqref{eq:1DForceBalance_bis} in the compact form
\begin{equation}
\forall x \in \Dom, \qquad D_\xi W\left(\omega,\frac{x}{\eps},D_xy\right) + \partial_\xi \psi\left(\omega,\frac{x}{\eps},D_x\dot{y}\right) \ni \overline{\sigma}(t).
\label{eq:ForceInclusion}
\end{equation}
  
Next, we derive a formulation which is ``conjugate'' to~\eqref{eq:ForceInclusion}, in the sense that it provides an equivalent relation between strain and strain rate, rather than relating dissipative and elastic stresses as~\eqref{eq:ForceInclusion} does (for further motivation of this construction, see Section~1 of~\cite{MielNotes}). This reformulation is particularly convenient for the purposes of the numerical results we describe in Section~\ref{sec:numerics}. Recall that, for any vector space $X$, the Legendre--Fenchel transform of a \red{proper convex function $f:X\to\R\cup\{+\infty\}$ (i.e. a convex function which takes a finite value for at least one point in $X$)} is the function $f^*:X'\to\R\cup\{+\infty\}$ defined by
\begin{equation*}
  f^*(\sigma) := \sup_{\xi\in X} \b\{\<\sigma,\xi\>_X-f(\xi)\b\},
\end{equation*}
where $\<\cdot,\cdot\>_X:X'\times X\to\R$ is the duality bracket between $X$ and its topological dual. We note that, by definition, $f(\xi)+f^*(\sigma)\geq \<\sigma,\xi\>_X$ for any $\xi\in X$ and $\sigma\in X'$. When $f$ is convex and lower semicontinuous, the following statements are all equivalent:
\begin{equation}
  (1)\quad\sigma\in\partial_\xi f(\xi), \qquad (2)\quad \xi\in\partial_\sigma f^*(\sigma)\quad\text{and}\qquad(3)\quad f(\xi)+f^*(\sigma) = \<\sigma,\xi\>_X.
  \label{eq:LFEquivalence}
\end{equation}
A proof of this fact is given in Theorem~23.5 of~\cite{Rockafellar}. \blue{We also recall that $f^*$ is always convex, being the supremum of convex functions.}

A straightforward computation demonstrates that
\begin{equation*}
\psi^*(\omega,x,\sigma)=
\begin{cases}
  0 & \text{if $|\sigma|\leq \mu(\omega,x)$},
  \\
  \displaystyle \frac{\b(|\sigma|-\mu(\omega,x)\b)^2}{2\nu(\omega,x)} & \text{if $|\sigma|>\mu(\omega,x)$}.
\end{cases}
\end{equation*}
Using the equivalence of the statements given in~\eqref{eq:LFEquivalence} and the fact that $\psi^*$ is continuously differentiable with respect to its third variable, so that its subdifferential is simply its derivative, \eqref{eq:ForceInclusion} is equivalent to the ``rate equation''
\begin{equation}
  D_x\dot{y}= D_\sigma\psi^*\left(\omega,\frac{x}{\eps},\overline{\sigma}(t)-D_\xi W\left(\omega,\frac{x}{\eps},D_xy\right)\right).
  \label{eq:RateEquation}
\end{equation}
This rate equation and the $\psi$--$\psi^*$ framework are convenient formulations of the problem mathematically, and also for implementing the numerical experiments we describe in the following section.
  
\subsection{Numerical simulation}
\label{sec:numerics}

We now present a numerical study of the model described in the previous section in order to motivate our subsequent work. We define the random constitutive relations as follows: divide $\R$ into intervals $I_i:=[p+i,p+i+1)$ for $i\in\Z$, where $p$ is a random variable uniformly distributed in $(-1,0]$, and assume that $A$, $\mu$ and $\nu$ are identically independently distributed constants on each interval $I_i$. More precisely, suppose that $A$, $\mu$ and $\nu$ are constant on each $I_i$, with values chosen uniformly at random from the following sets:
\begin{equation*}
  A|_{I_i} \in \{1,3\},\qquad \mu|_{I_i}\in\{0,0.4,0.7\} \quad\text{and}\quad \nu|_{I_i} \in\{0.05,0.1\}.
\end{equation*}
Define a reference displacement $\overline{y}(t,x) := \ell(t)x$, and suppose that initially $y(0,x) = \ell(0)x$. It is convenient to introduce the displacement away from this reference, $u(t,x):= y(t,x) - \overline{y}(t,x)$, satisfying the initial condition $u(0,x) = 0$ and the boundary condition $u(0,t) = u(1,t) = 0$. We infer from~\eqref{eq:RateEquation} that
\begin{equation*}
  \dot{\ell} + D_x\dot{u}= D_\sigma\psi^*\left(\omega,\frac{x}{\eps},\overline{\sigma}(t)-D_\xi W\left(\omega,\frac{x}{\eps},\ell+D_xu\right)\right).
\end{equation*}
It is straightforward to check that $D_xu$ is constant in space on each interval $I_i$ but time--dependent. It is therefore natural to introduce a vector of strains $S = \left\{ S_i \right\}_{0 \leq i \leq n}$ with $n=\lceil\eps^{-1}\rceil$, where $S_i:= D_xu|_{I_i}$ for each $i\in\b\{0,\ldots,n\b\}$. We remark that $S_i$ is simply the difference between the true strain for points in $I_i$ and the purely linear strain response to the boundary conditions, which would be $\ell(t)$. We also introduce the vector of constant elastic stresses, $\Sigma_i:=D_\xi W(\omega,x/\eps,\ell+D_xu)\big|_{I_i}$. To generate a random constitutive relation, define vectors of random parameters $A$, $\mu$ and $\nu$, where each element of these vectors is the corresponding constant value on the interval $I_i$, i.e. $A_i = A|_{I_i}$. With these definitions, we find that, on each interval $I_i$, we have
\begin{equation}\label{eq:1D_ODE_system_a}
  \Sigma_i(t) = A_i\cdot [\ell(t)+S_i(t)]
\end{equation}
and
\begin{equation}\label{eq:1D_ODE_system_b}
  \dot{\ell}(t)+\dot{S}_i(t) 
  = \begin{cases}
    0 & \text{if \ \ $|\overline{\sigma}(t)-\Sigma_i(t)|\leq \mu_i$},
    \\
    \displaystyle\frac{|\overline{\sigma}(t)-\Sigma_i(t)|-\mu_i}{\nu_i} \, \sgn\big(\overline{\sigma}(t)-\Sigma_i(t)\big) & \text{if \ \ $|\overline{\sigma}(t)-\Sigma_i(t)|\geq \mu_i$}.
  \end{cases}
\end{equation}

\subsubsection{Numerical method}

We now describe the numerical scheme we use to solve~\eqref{eq:1D_ODE_system_a}--\eqref{eq:1D_ODE_system_b}. Let $\Delta t$ denote a timestep, $S^j$ and $\Sigma^j$ be the values of $S$ and $\Sigma$, the vectors of strains and elastic stresses, computed at the $j$th timestep. Define the forward finite difference $\dps \Delta S^j:=\frac{S^{j+1}-S^j}{\Delta t}\in\R^{n+1}$. Let $\ell^j=\ell(j\Delta t)$, and set $\overline{\sigma}^j$ be the total stress at the $j$th timestep.

We discretize~\eqref{eq:1D_ODE_system_a}--\eqref{eq:1D_ODE_system_b} in the following way:
$$
S^{j+1} := S^j + \Delta t \, \Delta S^j \qquad \text{and} \qquad \Sigma^{j+1}_i := A_i(\ell^{j+1}+S^{j+1}_i)
$$
where $\Delta S^j$ and $\overline{\sigma}^{j+1}$ are chosen to solve
\begin{equation}\label{eq:1Drate}
  \Delta S^j_i
  = \begin{cases}
    -\Delta\ell^j & \text{if \ \ $|\overline{\sigma}^{j+1}-\Sigma^{j+1}_i|\leq \mu_i$},
    \\
    \displaystyle\frac{|\overline{\sigma}^{j+1}-\Sigma^{j+1}_i|-\mu_i}{\nu_i} \, \sgn(\overline{\sigma}^{j+1}-\Sigma^{j+1})-\Delta\ell^j & \text{if \ \ $|\overline{\sigma}^{j+1}-\Sigma^{j+1}_i|\geq \mu_i$},
  \end{cases}
\end{equation}
subject to
\begin{equation} \label{eq:1Dconstraint}
0 =\sum_{i=0}^n \Big|(\eps I_i)\cap[0,1] \Big| \, \Delta S^j_i.
\end{equation}
The equation~\eqref{eq:1Drate} describes the rate given a stress $\overline{\sigma}^{j+1}$, and~\eqref{eq:1Dconstraint} is a constraint which ensures that the boundary conditions are satisfied. Using~\eqref{eq:LFEquivalence}, the equation~\eqref{eq:1Drate} is equivalent to solving the inclusion
\begin{equation*}
\overline{\sigma}^{j+1} \in A_i \b(\ell^{j+1}+S^j_i + \Delta t \, \Delta S^j_i \b)+\begin{cases}
    \b\{\nu_i(\Delta S^j_i+\Delta\ell^j) + \mu_i\b\} & \text{if \ \ $\Delta S^j_i>-\Delta\ell^j$},\\[2mm]
    \b[-\mu_i,\mu_i\b] & \text{if \ \ $\Delta S^j_i=-\Delta\ell^j$},\\[2mm]
    \b\{\nu_i(\Delta S^j_i+\Delta\ell^j) - \mu_i\b\} & \text{if \ \ $\Delta S^j_i<-\Delta\ell^j$}.
  \end{cases}
\end{equation*}
Since the right--hand side of this inclusion is monotone in $\Delta S^j_i$, it follows that there is a unique solution for any $\overline{\sigma}^{j+1}$, which moreover increases as $\overline{\sigma}^{j+1}$ increases.

Given $\overline{\sigma}^{j+1}$, we can solve for $\Delta S^j \in \R^{n+1}$, and progressively optimize $\overline{\sigma}^{j+1}$ to find a value such that the constraint~\eqref{eq:1Dconstraint} is approximately satisfied. Since the inverse function for the right--hand side is only Lipschitz and not differentiable, we use the secant method to perform this optimization.

\begin{remark}
We have explained here how to solve the problem \emph{after} time--discretization. The well-posedness of the problem (for $\eps > 0$ fixed) \emph{before} time--discretization is established in Section~\ref{sec:wellposedness}.
\end{remark}
  
\subsubsection{Numerical results}

Our calculations are carried out in Julia 0.6.2~\cite{Julia}. Random samples are generated via the \texttt{sample} command from the \texttt{Distributions} package, which by default uses a Mersenne--Twister algorithm to generate pseudorandom numbers. Plots are created using \texttt{PyPlot}, which provides an interface with the Python plotting library \texttt{matplotlib}.
  
Figure~\ref{fig:rates} shows stress--strain curves for a fixed sample generated with $\eps=1/200$. Each curve corresponds to the same loading $\ell_\delta(t) := \sin^2(2\pi \, \delta  \, t)$ with different rates $\delta\in\{1,2^{-2},2^{-4},2^{-6},2^{-8}\}$, over the time range $[0,1/\delta]$, which corresponds to 2 cyclic loading periods. The system exhibits persistent hysteresis as the loading rate $\delta$ decreases, and the stress--strain curve appears to converge to a fixed limit cycle, indicating a rate--independent component in the model.

\begin{figure}[htbp]
  \includegraphics[width=0.8\textwidth]{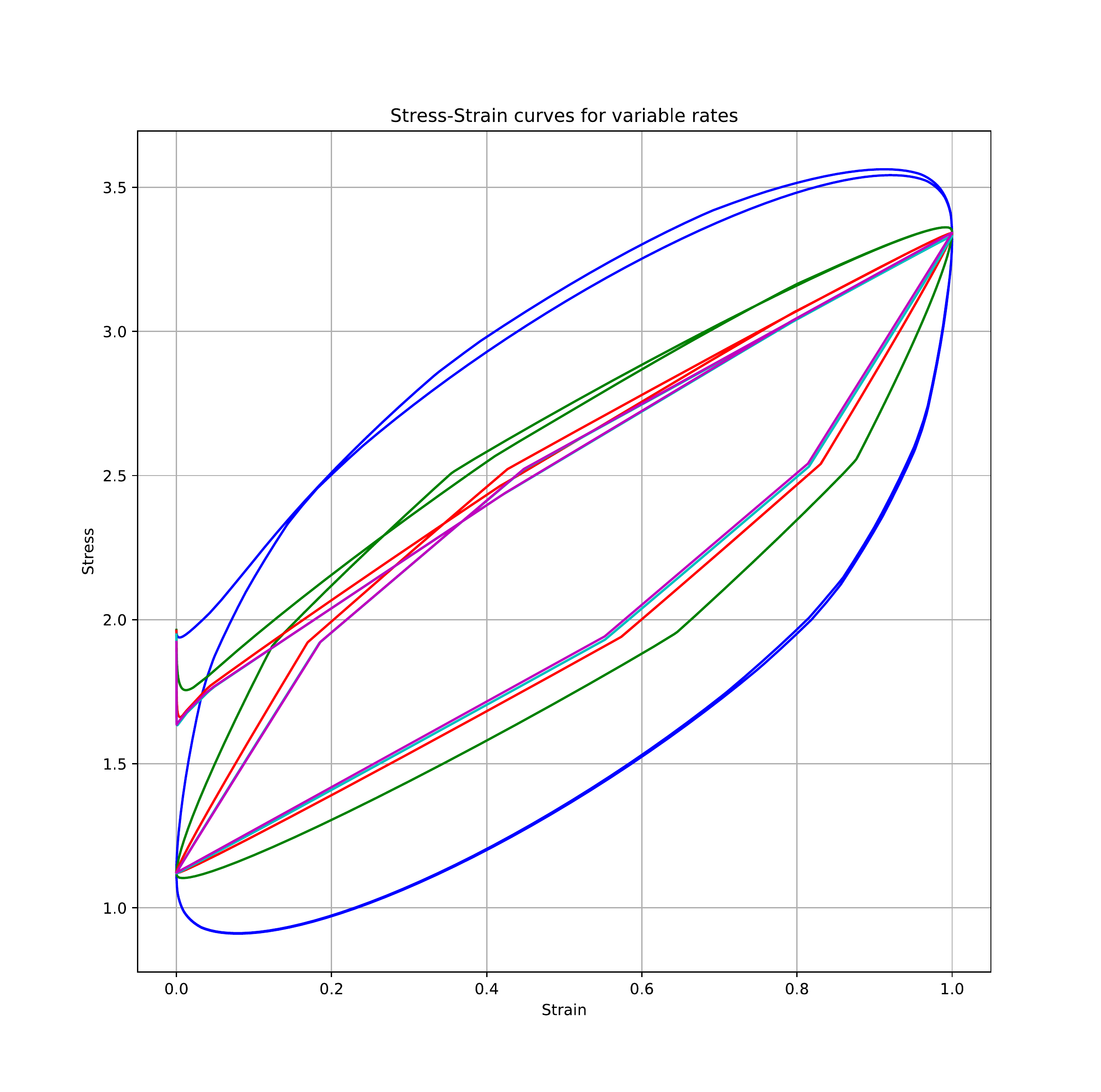}
  \caption{Loading and unloading at different rates for a given sample material with $\eps=1/200$ (blue: $\delta = 1$; green: $\delta = 2^{-2}$; red: $\delta = 2^{-4}$; cyan: $\delta = 2^{-6}$, magenta: $\delta=2^{-8}$). We represent $\overline{\sigma}(t)$ (the stress in the system, which is independent of $x$) as a function of $\ell_\delta(t)$ (the total strain of the system). The numerical evidence of convergence when $\delta \to 0$ is clear. \label{fig:rates}}
\end{figure} 
  
Next, for each $\eps\in\{1/100, 1/200, 1/400, 1/800, 1/1600\}$, $50$ random environments are generated. For each realization, the dynamics are simulated with $\delta=0.1$, again over 2 periods. The results of these calculations are shown on Figures~\ref{fig:sample} and~\ref{fig:variance}. Figure~\ref{fig:sample} shows the mean (over the random environments) stress--strain curve for $\eps=1/1600$, along with an envelope indicating an error bar of one standard deviation in the calculated stress at each timestep. Figure~\ref{fig:variance} shows the decrease in the variance of the stress calculated at $t=0.25$ as $\eps$ decreases. The latter figure shows a typical linear relationship between $\eps$ and the variance, indicating convergence. A similar decrease is observed at every time.

Finally, we note that, for fixed rate $\delta\ll1$, numerical experiments show that taking the magnitude of the possible values of $\mu$ to zero results in a collapse of the hysteresis loops, indicating that the internal dry friction included in the model is indeed the mechanism which results in hysteresis which persists at very low strain rates.
 
\begin{figure}[tp]
  \includegraphics[width=0.8\textwidth]{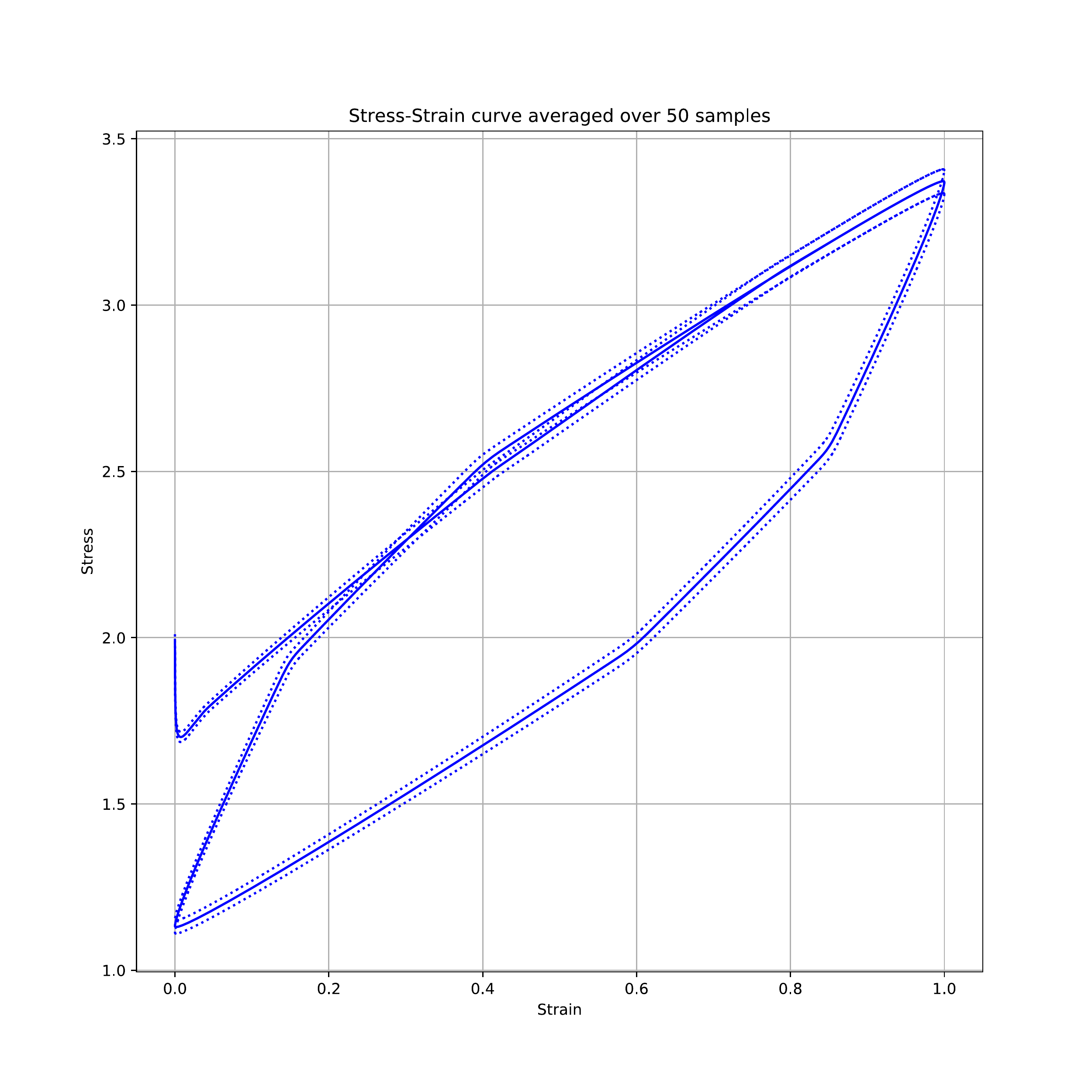}
  \caption{Stress--strain curve (mean taken over $50$ realizations) for $\eps=1/1600$. Dotted lines indicate the standard deviation in the calculated stress at each timestep. For all simulations, the rate parameter is taken to be $\delta=0.1$. \label{fig:sample}}
\end{figure}
  
\begin{figure}[tp]
  \includegraphics[width=0.6\textwidth]{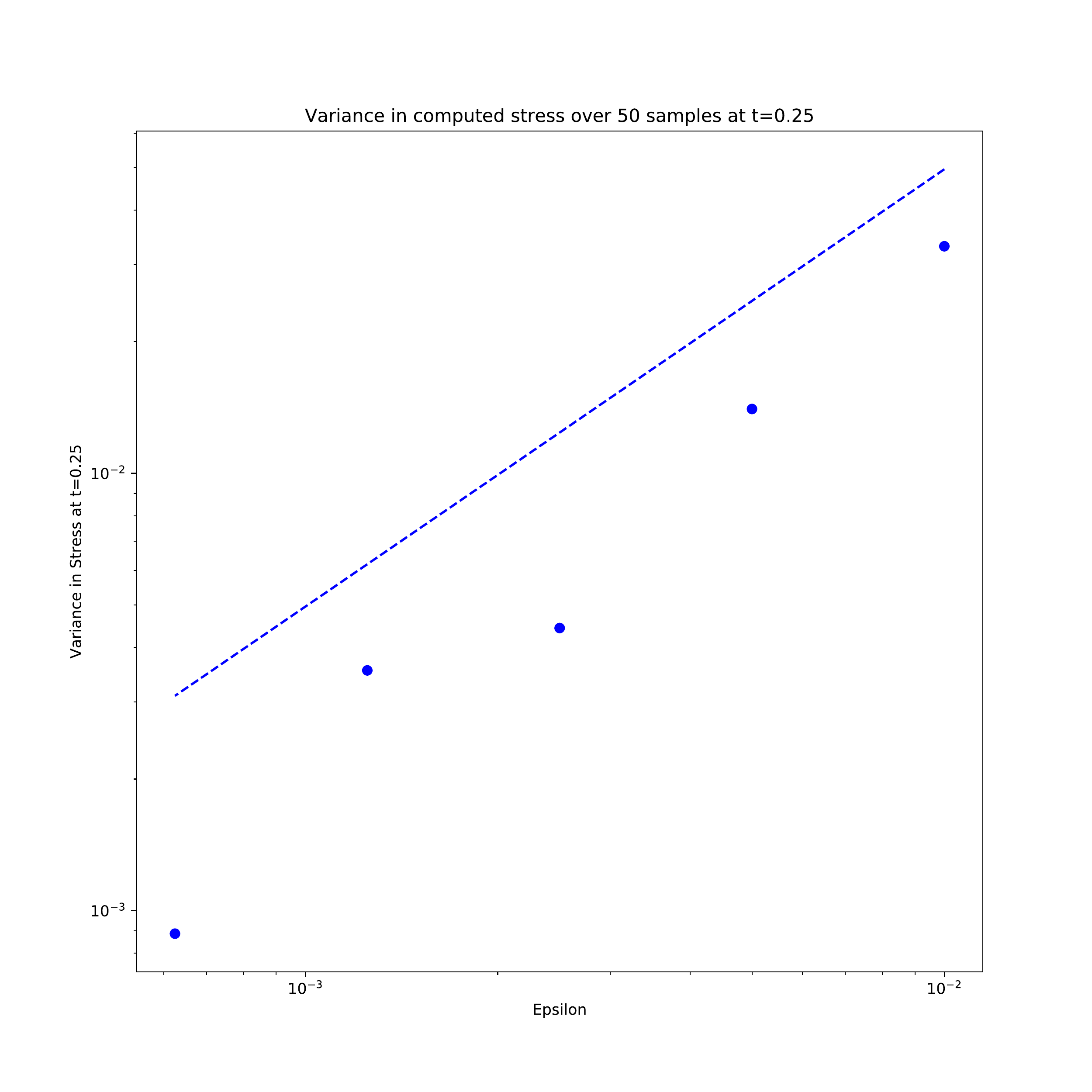}
  \caption{Log--log plot of the variance (calculated using 50 realizations) of the stress at $t=0.25$ as a function of $\eps\in\{1/100,1/200,1/400,1/800,1/1600\}$. The dashed line corresponds to a linear rate in $\eps$. \label{fig:variance}}
\end{figure}  
  
\subsection{General model}
\label{sec:general}

The numerical results of the previous section suggest that, as $\eps\to0$, the random fluctuations tend to be ``averaged'', so that we may hope that there is ultimately convergence to an underlying deterministic model which describes the limiting asymptotic behaviour. As we prove in this work, this is indeed the case. In this section, we therefore detail the precise mathematical assumptions made in order to prove our subsequent results.
We comment on both the applicability of these assumptions and the possibility of extending our study to other cases in Section~\ref{sec:Discussion}.
  
\subsubsection{Assumptions on randomness}
\label{sec:Randomness}

We assume that the random constitutive laws may be described in terms of random variables defined on a probability space $(\Omega,\Sigma,\Pr)$. This probability space is assumed to satisfy the key assumption that the usual Hilbert space of square--integrable random variables, $\LL^2(\Omega;\Pr)$, is \emph{separable}, i.e. contains a countable dense subset.
  
We denote $d$ the ambient physical dimension (which was taken to be $d=1$ in Sections~\ref{sec:InitialExample} and~\ref{sec:numerics}). We suppose that the space $\Omega$ is endowed with a \emph{$d$--dimensional ergodic dynamical system}, i.e. there exists a family of $\Pr$--measurable invertible maps $\{T(x):\Omega\to\Omega \, \sep \, x\in\R^d\}$ such that
\begin{enumerate}
\item $T$ is a \emph{group action} on $\Omega$ for the addition in $\R^d$, i.e. $T(x)\circ T(y)=T(x+y)$ for any $x$ and $y\in\R^d$, and $T(0)$ is the identity map;
\item $\Pr$ is an \emph{invariant measure} with respect to $T$, i.e. $\Pr[T(x)^{-1}E] = \Pr[E]$ for any $x\in\R^d$ and any $E\in\Sigma$;
\item for any $E\in\Sigma$, the set $\{(\omega,x)\in\Omega\times\R^d \ \sep \ T(x)\omega\in E\}$ is an element of the sigma--algebra generated by $\Sigma\times\mathcal{M}^d$, where $\mathcal{M}^d$ is the $d$--dimensional Lebesgue sigma--algebra;
\item $T$ is \emph{ergodic}, i.e. any set $E \in \Sigma$ such that
\begin{equation*}
  \Pr\b[\b(T(x)E\cup E\b)\setminus \b(T(x)E\cap E\b)\b]=0 \quad\text{for any $x\in\R^d$}
\end{equation*}
satisfies either $\Pr[E]=0$ or $\Pr[E]=1$.
\end{enumerate}
A function $F$ defined on $\Omega\times\R^d$ is called \emph{stationary} if there exists another function $F_0$ defined on $\Omega$ such that
\begin{equation}
  F(\omega,x) = F_0\b(T(x)\omega\b)\qquad\text{for any $x\in\R^d$ and $\Pr$--a.e. in $\Omega$}. 
  \label{eq:Stationarity}
\end{equation}
Informally, $F$ being stationary means that the distribution of $F(\cdot,x)$ does not depend upon $x$. As an example, when such quantities are defined, we have
\begin{equation*}
  \Ex[F(\cdot,x)] = \Ex[F_0] \qquad \text{and} \qquad \mathrm{Var}[F(\cdot,x)] = \mathrm{Var}\big[F_0\big] \qquad\text{for any $x\in\R^d$}.
\end{equation*}
Stationarity and ergodicity are the main constitutive assumptions made on the ``randomness'' of the material parameters.
  
\subsubsection{Elastic constitutive law}
\label{sec:elasticity_assumptions}

We suppose that the material under consideration obeys a linear elastic constitutive law with coefficients that are random and vary on a small length scale (denoted henceforth $\eps$) relative to the size of the body $\Dom$. In particular we assume that the elastic stored energy $W:\Omega\times\R^d\times\R^d\to\R$ takes the form
\begin{equation*}
  W(\omega,x,\xi) = \frac12 \xi\cdot A(\omega,x)\xi,
\end{equation*}
where $A$ is assumed to satisfy the following properties.

\medskip

\begin{center}
  \framed{0.9\textwidth}{
    {\bf Assumptions on $A$.}
    \begin{enumerate}
    \item[$(A1)$] $A:\Omega\times\R^d\to\R^{d\times d}$ is measurable with respect to the sigma--algebra generated by $\Sigma\times\mathcal{M}^d$, and is stationary in the sense of~\eqref{eq:Stationarity}, i.e. there exists \blue{a measurable function} $A_0:\Omega\to\R^{d\times d}$ such that
      \begin{equation*}
        A(x,\omega) = A_0\big(T(x)\omega\big)\quad\text{for any $x\in\R^d$ and almost any $\omega\in\Omega$}.
      \end{equation*}
    \item[$(A2)$] $A$ is symmetric, i.e. $A(\omega,x)=A(\omega,x)^T$ almost everywhere in $\Omega \times \R^d$.
    \item[$(A3)$] There exist constants $0<\underline{A} \leq \overline{A}<+\infty$ such that, for any $\xi\in\R^d$,
      \begin{equation}
        \underline{A} \, |\xi|^2 \leq \xi\cdot A(\omega,x)\xi \leq \overline{A} \, |\xi|^2\quad\text{a.e. in $\Omega\times\R^d$}.
        \label{eq:A_bounds}
      \end{equation}
  \end{enumerate}}
\end{center}

\medskip

We note that assumptions~$(A2)$ and~$(A3)$ are equivalent to similar hypotheses on the function $A_0$ introduced in assumption~$(A1)$. To complement assumption~$(A1)$, for convenience we define $W_0(\omega,\xi) := \frac12 \xi\cdot A_0(\omega)\xi$.

The total elastic potential energy of the body associated with a displacement $y$ is assumed to be
\begin{equation*}
  \Phi^\eps_\omega[y]
  :=
  \int_\Dom W\left(\omega,\frac{x}{\eps},D_xy\right)\dx
  =
  \frac{1}{2} \int_\Dom D_xy\cdot A\left(\omega,\frac{x}{\eps}\right)D_xy\dx
  \quad\text{for any $y\in\HH^1(\Dom)$}.
\end{equation*}
We note that the bounds~\eqref{eq:A_bounds} ensure that $\Phi^\eps_\omega[y]$ is well--defined. We also note that $\Phi^\eps_\omega$ is G\^ateaux--differentiable on $\HH^1(\Dom)$, with derivative $\nabla\Phi^\eps_\omega: \HH^1(\Dom)\to \left( \HH^1(\Dom) \right)'$ given by
\begin{equation} \label{eq:nabla_Phi}
\forall u \in \HH^1(\Dom), \quad \<\nabla \Phi^\eps_\omega[y],u\>_{\HH^1(\Dom)} := \int_\Dom A\left(\omega,\frac{x}{\eps}\right)D_xy\cdot D_xu\dx.
\end{equation}
We note that $\nabla\Phi^\eps_\omega[y]$ may be thought of as the elastic force arising due to the displacement $y$, and $\dps A\left(\omega,\frac{\cdot}{\eps}\right)D_xy\in\LL^2(\Dom)^d$ as the stress field due to the elastic deformation.

\begin{remark}
Throughout this article, we adopt the terminology of elasticity (refering to displacement, strain, stress, \dots), even though our unknown function is scalar-valued. This terminology indeed provides a clearer intuition about our approach. In Section~\ref{sec:Discussion} we discuss the extension of our work to a true elastic problem.
\end{remark}
  
\subsubsection{Dissipative constitutive law}
\label{sec:dissipation_assumptions}

We suppose that energy is locally dissipated via a dissipation potential which induces forces which act to oppose local changes in strain only, and not the absolute position of the body: this is expressed as a function
\begin{equation*}
  \psi:\Omega\times\R^d\times\R^d\to\R,
\end{equation*}
where $\psi$ is assumed to satisfy the following assumptions.

\medskip

\begin{center}
  \framed{0.9\textwidth}{
  {\bf Assumptions on $\psi$.}
  \begin{enumerate}
  \item[$(\psi1)$] For any $\xi\in\R^d$, the function $(\omega,x)\in\Omega\times\R^d\mapsto\psi(\omega,x,\xi)$ is measurable with respect to the sigma--algebra generated by $\Sigma\times\Leb^d$, and is stationary in the sense of~\eqref{eq:Stationarity}, i.e. there exists $\psi_0:\Omega\times\R^d\to\R$ such that
    \begin{equation*}
      \text{for any $\xi\in\R^d$}, \quad \psi(\omega,x,\xi) = \psi_0\big(T(x)\omega,\xi\big) \quad\text{for any $x\in\R^d$ and almost every $\omega\in\Omega$}.
    \end{equation*}
  \item[$(\psi2)$] $\psi(\omega,x,\xi)\geq 0$ for any $(\omega,x,\xi) \in \Omega\times\R^d\times\R^d$, and $\psi(\omega,x,0)=0$ for any $(\omega,x)\in \Omega\times\R^d$.
  \item[$(\psi3)$] $\psi$ is uniformly strongly convex in its final variable, i.e. there exists $c>0$ such that
    \begin{equation*}
      \xi \in \R^d \mapsto \psi(\omega,x,\xi)-c|\xi|^2\quad\text{is convex for almost every $(\omega,x)\in\Omega\times\R^d$}.
    \end{equation*}
  \item[$(\psi4)$] There exists $C>0$ such that, for any $\xi\in\R^d$,
    \begin{equation*}
      \psi(\omega,x,\xi)\leq C(1+|\xi|^2)\quad\text{for almost every $(\omega,x)\in\Omega\times\R^d$}.
    \end{equation*}
  \end{enumerate}}
\end{center}

\medskip

As in the case of the elastic potential energy, we note that assumptions~$(\psi2)-(\psi4)$ correspond to similar hypotheses on the function $\psi_0$ introduced in assumption~$(\psi1)$.

We note that, as an immediate consequence of the strong convexity assumption~$(\psi3)$,
$$
  (\xi-\xi')\cdot (v-v')\geq 2c|v-v'|^2 \quad \text{where } \xi\in\partial_\xi\psi(\omega,x,v) \text{ and } \xi'\in\partial_\xi\psi(\omega,x,v'),
$$
for almost every $(\omega,x)\in\Omega\times\R^d$ and any $v,v'\in\R^d$. The positivity assumption~$(\psi2)$ entails that
$$
  0\in\partial_\xi\psi(\omega,x,0)\quad\text{for any $(\omega,x)\in \Omega \times \R^d$},
$$
and hence, using~$(\psi3)$, we get that, for any $(\omega,x,v) \in\Omega\times\R^d\times\R^d$,
\begin{equation}
  \psi(\omega,x,v)-c|v|^2\geq \psi(\omega,x,0)-c|0|^2+0 \cdot v=0,\quad\text{thus}\quad\psi(\omega,x,v)\geq c|v|^2.
  \label{eq:psi_lower_bound}
\end{equation}

We also note that~$(\psi4)$ entails an important bound on elements of the subdifferential of $\psi$. Suppose that $\sigma\in\partial_\xi\psi(\omega,x,\xi)$. Then, for any $\eta \in \R^d$, we have
\begin{equation*}
  \psi(\omega,x,\xi)+\sigma\cdot\eta\leq \psi(\omega,x,\xi+\eta).
\end{equation*}
Setting $\eta = t\sigma$, and using the fact that $\psi(\omega,x,\xi)\geq0$ (see~$(\psi2)$) and the upper bound assumed in~$(\psi4)$, we have
\begin{equation*}
  t|\sigma|^2\leq \psi(\omega,x,\xi+t\sigma)\leq C(1+|\xi+t\sigma|^2)\leq C(1+2|\xi|^2+2t^2|\sigma|^2).
\end{equation*}
Rearranging, and adding $C$ to the right--hand side, we obtain
\begin{equation*}
  \left(t-2Ct^2\right)|\sigma|^2 \leq 2C(1+|\xi|^2).
\end{equation*}
Choosing $t=1/4C$ and rearranging, we have that
\begin{equation}\label{eq:subdiff_bnd}
  \forall \sigma \in \partial_\xi\psi(\omega,x,\xi), \qquad |\sigma|^2\leq 16C^2\left(1+|\xi|^2\right)
\end{equation}
where $C$ is independent of $\omega$, $x$ and $\xi$ ($C$ is actually the constant appearing in~$(\psi4)$).

\medskip

The dissipation potential evaluated at a velocity field $v \in \HH^1(\Dom)$ is assumed to be
\begin{equation*}
  \Psi^\eps_\omega[v] = \int_\Dom \psi\left(\omega,\frac{x}{\eps},D_xv\right)\dx.
\end{equation*}
Using~\eqref{eq:psi_lower_bound} and assumptions~$(\psi2)$ and~$(\psi3)$, it follows that $\Psi^\eps_\omega$ is a positive strictly convex functional on $\HH^1(\Dom)$ for almost every $\omega\in\Omega$. Moreover, the subdifferential of $\Psi^\eps_\omega$ on $\HH^1(\Dom)$ may be identified as being
\begin{multline*}
  \partial\Psi^\eps_\omega[v]:=\B\{f\in\left(\HH^1(\Dom)\right)' \Bsep \<f,u\>_{\HH^1(\Dom)} = \int_\Dom \sigma\cdot D_xu\dx \ \ \text{for any $u \in \HH^1(\Dom)$},
  \\
  \text{ with } \sigma\in \LL^2(\Dom)^d \text{ and }\sigma(x)\in\partial_\xi\psi\left(\omega,\frac{x}{\eps},D_xv(x)\right)\text{ for almost every $x\in\Dom$} \B\}.
\end{multline*}
In the above definition, we may think of $\sigma$ as being a candidate for the dissipative stress which acts to oppose motion when the strain rate is $D_xv$.
  
\subsubsection{Evolution problem}
\label{sec:EvolutionProblem}

We are now in a position to formulate the evolution problem we study. We suppose that the material we consider is driven by displacement boundary conditions on $\GDir\subseteq\partial\Dom$ and undergoes a loading which is slow enough such that inertial effects may be neglected. Elastic and dissipative body forces are hence equilibrated at all times. This is equivalent to requiring that the displacement $y$ satisfies the inclusion
\begin{equation}
  0\in\partial\Psi^\eps_\omega[\dot{y}(t)]+\nabla \Phi^\eps_\omega[y(t)]\qquad \text{for almost every $t\in[0,T]$}.
  \label{eq:ForceBalance}
\end{equation}
We suppose that initially
\begin{equation}
  y(0) = \overline{y}(0),
  \label{eq:IC}
\end{equation}
and that the displacement boundary condition takes the form
\begin{equation}
  y(t)\b|_{\GDir} = \overline{y}(t)\b|_{\GDir}
  \label{eq:BC}
\end{equation}
for some function $\overline{y}(t)$ defined on $\Dom$. On the remainder $\GNeu$ of the boundary, the material is free to relax. To enforce these conditions, we decompose $y(t) = \overline{y}(t) + u(t)$ where $u$ vanishes on $\GDir$, and consider the ``lifted'' functionals $\Phi^\eps_{t,\omega}$ and $\Psi^\eps_{t,\omega}$, defined by
\begin{equation*}
  \Phi^\eps_{t,\omega}[u] := \Phi^\eps_\omega[\overline{y}(t)+u] \quad\text{and}\quad \Psi^\eps_{t,\omega}[v] := \Psi^\eps_\omega[\dot{\overline{y}}(t)+v].
\end{equation*}
The requirement that the force balance~\eqref{eq:ForceBalance} along with the initial condition~\eqref{eq:IC} and the boundary conditions~\eqref{eq:BC} are satisfied is then equivalent to seeking a function $u^\eps_\omega : [0,T]\times\Dom\to\R$ which, for almost every $\omega\in\Omega$, satisfies
\begin{equation}
  \partial \Psi^\eps_{t,\omega}[\dot{u}^\eps_\omega(t)]+\nabla \Phi^\eps_{t,\omega}[u^\eps_\omega(t)]\ni0\quad\text{for a.e. $t\in[0,T]$}, \qquad u^\eps_\omega(0) = 0, \qquad \text{$u^\eps_\omega(t)=0$ on $\GDir$}.
  \label{eq:EvolutionProblem}
\end{equation}
Our subsequent goals are to show that this evolution problem is well--posed for any fixed $\eps>0$, and then to identify a limiting problem when $\eps\to0$.

\subsubsection{A ``checkerboard'' example}
\label{sec:examples}

We now describe a specific example falling within our assumptions, which may be thought of as a randomly--coloured ``checkerboard'' of constitutive laws. We consider regularly--spaced sites, in which one of finitely--many sets of constitutive relations are satisfied, selected independently according to a probability measure $\mu$ defined on the finite state space $\mathcal{S}$. Sites are independent from each other, and in a given coordinate frame, these sites are shifted by a random vector relative to the coordinate axes. Following the construction described in Section~7.3 of~\cite{JKO94}, let $\dps Q=\left[-\frac12,\frac12\right)^d$ be the unit cube, and set
\begin{equation*}
  \Omega := \b\{\omega:\R^d\to\mathcal{S} \ \bsep \ \text{$\exists q \in Q$ such that, for any $k\in\Z^d$, $\omega(x)$ is constant on $q+k+Q$}\b\}.
\end{equation*}
Equivalently, we may identify $\omega\in \Omega$ with a vector $(q,Z)\in Q\times \mathcal{S}^{\Z^d}$, and define a sigma--algebra $\dps \Sigma=\mathcal{M}^d \otimes \bigotimes_{k\in\Z^d}\mathcal{P}(\mathcal{S})$, i.e. the sigma--algebra generated by the product of the Lebesgue sigma--algebra $\mathcal{M}^d$ on $Q$ with countably many copies of the power set $\mathcal{P}(\mathcal{S})$. We define the probability measure $\Pr$ as follows: given any $V \subset Q$ and any finite subset $\mathcal{I}\subset\Z^d$, let $U_k \subset \mathcal{S}$ for any $k \in \mathcal{I}$. Set $U_k=\mathcal{S}$ for $k\notin\mathcal{I}$. Then $\dps V \times \prod_{k\in\Z^d} U_k \subset Q \times \mathcal{S}^{\Z^d} = \Omega$, and we define
\begin{equation*}
  \Pr\bg(V \times \prod_{k\in\Z^d} U_k \bg)
  = \Leb^d(V)\prod_{k\in\mathcal{I}} \mu(U_k).
\end{equation*}
Now, using the representation of $\Omega$ as piecewise constant functions, we define the action $T(x)$ as follows:
\begin{equation*}
\forall x,s \in \R^d, \quad \b[T(x)\omega\b](s) = \omega(s+x).
\end{equation*}
It is straightforward to check that $T$ is well--defined as a bijection on $\Omega$, and that $(\Omega,\Sigma,\Pr)$ and $T$ satisfy all the properties assumed in Section~\ref{sec:Randomness}.
  
\medskip

Now that we have defined the probabilistic setting, we build stationary functions. Given $\omega\in\Omega$, we define
\begin{equation*}
  A\big((q,Z),x\big) = A_{0,Z(k)}(x+q) \quad \text{for any $x\in k+Q$, \ \ $k\in\Z^d$},
\end{equation*}
where $A_{0,i}\in\LL^\infty_\per(Q)^{d\times d}$ for each $i\in\mathcal{S}$, $\overline{A} \, |\xi|^2 \geq \xi\cdot A_{0,i}(x)\xi \geq \underline{A} \, |\xi|^2$ for any $\xi\in\R^d$, $x\in Q$ and $i\in\mathcal{S}$, and $A_{0,i}(x)$ is a symmetric matrix for any $x\in Q$ and $i\in\mathcal{S}$. Similarly, we define $\psi$ by
\begin{equation*}
  \psi\big((q,Z),x,\xi\big) = \frac12 \, \nu_{0,Z(k)}(x+q) \, |\xi|^2 + \mu_{0,Z(k)}(x+q) \, |\xi| \quad\text{for any $x\in k+Q$, \ \ $k\in\Z^d$},
\end{equation*}
where $\nu_{0,i}$ and $\mu_{0,i}$ belong to $\LL^\infty_{\rm per}(Q)$ for each $i\in\mathcal{S}$, with $\nu_{0,i}(x) \geq \underline{\nu} > 0$ and $\mu_{0,i}(x)\geq0$ for any $x \in Q$ and any $i\in\mathcal{S}$.

Both $A$ and $\psi$ are measurable and stationary in the sense of~\eqref{eq:Stationarity}, so the above construction provides an example of a model satisfying our assumptions. We note that the model described in Sections~\ref{sec:InitialExample} and~\ref{sec:numerics} is a particular case of this construction.
  
We also remark that, if we take $|\mathcal{S}|=1$, then we have, for any $x\in \R^d$,
\begin{equation*}
  A\big((q,Z),x\big) = A_0(x+q) \quad \text{and} \quad \psi\big((q,Z),x,\xi\big) = \frac12 \, \nu_0(x+q) \, |\xi|^2 + \mu_0(x+q) \, |\xi|,
\end{equation*}
for fixed $Q$--periodic functions $A_0$, $\nu_0$ and $\mu_0$. In this case, our results correspond to the case of periodic homogenization, where we average uniformly over shifts of the underlying periodic lattice.
  
\subsubsection{Discussion of assumptions}
\label{sec:Discussion}

In this section, we briefly discuss the various assumptions made above.

\medskip

\noindent
\emph{Linear elastic stress--strain relation.} The model described above assumes that $W(\omega,x,\xi)$ is quadratic with respect to $\xi$. With some adaptations, we believe that the proofs could be extended to the case where $W$ remains $C^2$ and strictly convex with respect to $\xi$, with a standard $p$--growth condition that reads as follows: there exist $\underline{A} >0$, $\overline{A}>0$ and $p\geq 2$ such that
\begin{equation*}
  \underline{A} \, \b(|\xi|^p-1\b)\leq W(\omega,x,\xi)\leq \overline{A} \, \b(|\xi|^p+1\b)\qquad\text{for any $x$ and $\xi\in\R^d$ and $\Pr$--a.e. in $\Omega$}.
\end{equation*}
More generally, we would expect the results to hold when the dissipation potential $\psi$ is strictly convex and satisfies $q$--growth conditions with $q>1$, and $W$ satisfies $p$--growth conditions with $p\geq q$.

\medskip

\noindent
\emph{Scalar displacement variable.} We assume throughout this article that $u$ is scalar--valued. Physically, the multidimensional formulation of the model therefore corresponds to a membrane or anti--plane model where only the out--of--plane displacement is taken into account, rather than to a true bulk viscoelastic problem.

Similar techniques to those used below should cover the case where $u$ is a vector--valued function, $W$ remains quadratic (i.e. $W(\omega,x,\xi) = \tfrac12 \xi:\mathbb{C}(\omega,x):\xi$ for any $\xi\in\R^{d\times d}$) as in linear elasticity, and where the dissipation potential $\psi(\omega,x,\xi)$ is again convex. The main additional steps required in such a case would be applications of Korn's inequality. Some results regarding two--scale convergence and the characterisation of the dual dissipation potential would also need to be reproved.

To correctly treat a more general nonlinear elastic problem where the stored energy density $W$ is a function of the deformation gradient and satisfies physical frame--indifference conditions, would require the assumption that $W(\omega,x,\xi)$ is polyconvex, i.e. is a convex function of the minors of $\xi\in\R^{d\times d}$. In that case, our analysis would become significantly more complex, and it is not completely obvious what sort of convexity condition on a corresponding dissipation potential $\psi$ would be sufficient to guarantee existence.

Moreover, there are examples in the literature (see e.g.~\cite{B94}) in which polyconvexity does not persist after a homogenization procedure in a static setting, so existence results for any limiting evolution are not clear. For these reasons, we have chosen to avoid these significant technical complications, and to restrict ourselves to the case of a scalar--valued function $u$.
  
\subsection{Main results}
\label{sec:main}

We are now in a position to informally state and discuss our two main results (rigorous statements of these theorems are provided at the beginning of Section~\ref{sec:wellposedness} and Section~\ref{sec:limit} respectively).

\subsubsection{General statement}

Our first result demonstrates the existence of a solution for fixed $\eps$.
  
\begin{theorem}
\label{th:eps_existence}
Under appropriate regularity hypotheses on $\overline{y}$, there exists a unique solution $u^\eps_\omega$ to~\eqref{eq:EvolutionProblem} for almost every $\omega\in\Omega$. 
\end{theorem}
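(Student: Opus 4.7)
The approach I would take is an incremental minimization (minimizing movements) scheme, followed by a limit passage based on the theory of doubly nonlinear evolutions; this is natural given that the dissipation is strongly convex by $(\psi3)$ and the elastic energy is quadratic. Working on $V := \HH^1_{\GDir}(\Dom)$, on which a Poincaré inequality holds (assuming $\GDir$ has positive surface measure), I would reformulate~\eqref{eq:EvolutionProblem} in terms of $y(t) := \overline y(t) + u(t)$, which must satisfy $-\nabla\Phi^\eps_\omega[y(t)] \in \partial\Psi^\eps_\omega[\dot y(t)]$ weakly against test functions in $V$, together with $y(0)=\overline y(0)$ and $y(t)|_{\GDir}=\overline y(t)|_{\GDir}$. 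The natural regularity hypothesis mentioned in the statement would be $\overline y \in \HH^1(0,T;\HH^1(\Dom))$, so that $\dot{\overline y}$ is well-defined in $\LL^2(0,T;\HH^1(\Dom))$.

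For construction, partition $[0,T]$ with step $\tau=T/N$ and at each node $t_k = k\tau$ define $y^k$ as the unique minimizer of
$$y \mapsto \Phi^\eps_\omega[y] + \tau\,\Psi^\eps_\omega\!\left[\frac{y-y^{k-1}}{\tau}\right]$$
subject to $y|_{\GDir} = \overline y(t_k)|_{\GDir}$. Existence and uniqueness at each step follow from strict convexity and coercivity, which combine the ellipticity~\eqref{eq:A_bounds}, Poincaré's inequality on $V$, and the quadratic lower bound~\eqref{eq:psi_lower_bound}. The Euler-Lagrange relation produces, at each step, a selection $\sigma^k\in\LL^2(\Dom)^d$ with $\sigma^k(x)\in\partial_\xi\psi(\omega,x/\eps,D_x(y^k-y^{k-1})/\tau)$ a.e.\ satisfying $\int_\Dom \bigl(A(\omega,x/\eps)D_xy^k + \sigma^k\bigr)\cdot D_x u\,dx = 0$ for every $u\in V$. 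Testing this identity with $u^k-u^{k-1}\in V$, decomposing $u^k-u^{k-1} = (y^k-y^{k-1}) - (\overline y(t_k)-\overline y(t_{k-1}))$, using convexity of $\Phi^\eps_\omega$ to control $\Phi^\eps_\omega[y^k]-\Phi^\eps_\omega[y^{k-1}]$, the subgradient inequality plus~\eqref{eq:psi_lower_bound} on the dissipative side, and the pointwise bound~\eqref{eq:subdiff_bnd} on $\sigma^k$, a discrete Grönwall argument yields uniform bounds on the piecewise-affine and piecewise-constant interpolants of $\{y^k\}$ in $\HH^1(0,T;V)\cap \LL^\infty(0,T;V)$, and on $\{\sigma^\tau\}$ in $\LL^2((0,T)\times\Dom)^d$.

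The main obstacle is the passage to the limit in the \emph{set-valued} inclusion $\sigma^\tau(x,t) \in \partial_\xi\psi(\omega,x/\eps,D_x\dot y^\tau(x,t))$, which is genuinely multivalued at $D_x\dot y=0$ because of the dry-friction component $\mu|\xi|$ of $\psi$. Weak compactness supplies limits $y^\star$ and $\sigma^\star$, but identifying that $\sigma^\star$ is an admissible selection requires a Minty-type argument: the discrete energy identity above, combined with the (weak) lower semicontinuity of $\Phi^\eps_\omega$ and of $\int_0^T\Psi^\eps_\omega[\dot y^\tau]\,dt$, yields $\limsup_\tau \int_0^T\!\int_\Dom \sigma^\tau\cdot D_x\dot y^\tau\,dx\,dt \leq \int_0^T\!\int_\Dom \sigma^\star\cdot D_x\dot y^\star\,dx\,dt$, and maximal monotonicity of the operator $v\mapsto \partial_\xi\psi(\omega,x/\eps,D_xv)$ then identifies $\sigma^\star(x,t)\in\partial_\xi\psi(\omega,x/\eps,D_x\dot y^\star(x,t))$ for a.e.\ $(t,x)$. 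The time-dependence introduced by $\overline y$ enters only through additive terms of the form $\<\nabla\Phi^\eps_\omega[y^\tau],\dot{\overline y}\>$ and $\<\sigma^\tau,\dot{\overline y}\>$, which pass to the limit by weak-strong pairing given the assumed regularity of $\overline y$.

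Uniqueness is comparatively straightforward. Given two solutions $y_1,y_2$, subtract the corresponding equations and test with $\dot u_1-\dot u_2 = \dot y_1-\dot y_2 \in V$ (both solutions share the same Dirichlet data). The strong monotonicity provided by $(\psi3)$ forces the dissipative pairing to dominate $2c\,\|D_x(\dot y_1-\dot y_2)\|_{\LL^2}^2$, while the elastic pairing equals $\tfrac12\tfrac{d}{dt}\int_\Dom (A(\omega,x/\eps)D_x(y_1-y_2))\cdot D_x(y_1-y_2)\,dx$. Integrating from $0$ to $t$, using $y_1(0)=y_2(0)$ and the coercivity of $A$, yields $y_1\equiv y_2$.
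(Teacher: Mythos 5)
Your proof is essentially correct, but it takes a genuinely different and more elaborate route than the paper. You use the classical Rothe/minimizing-movements scheme: time discretization, incremental variational problems, \emph{a priori} estimates via a discrete energy balance and Gr\"onwall, weak compactness, and a Minty-type argument to identify the multivalued subdifferential in the limit. The paper instead exploits a key structural simplification: because $\psi$ is \emph{strongly} convex (assumption $(\psi3)$), the static inclusion $0\in\partial\Psi^\eps_{t,\omega}[v]+\nabla\Phi^\eps_{t,\omega}[u]$ has a \emph{unique} solution $v=\Vel^\eps_{t,\omega}[u]$ for each fixed $u$, namely the minimizer of $v\mapsto\Psi^\eps_{t,\omega}[v]+\langle\nabla\Phi^\eps_{t,\omega}[u],v\rangle$. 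Strong monotonicity of $\partial\Psi^\eps_{t,\omega}$ together with the boundedness of $\nabla\Phi^\eps_{t,\omega}$ makes $\Vel^\eps_{t,\omega}$ a \emph{Lipschitz} single-valued map with constant $\overline{A}/2c$, independent of $(t,\omega,\eps)$. The evolution then reduces to the ODE $\dot u = \Vel^\eps_{t,\omega}[u]$, and the paper solves it by the Banach fixed-point theorem (Picard iteration) on a short time interval, then iterates. This avoids entirely the time discretization, the chain-rule inequality for $\Phi^\eps_\omega$ along the discrete iterates, and the Minty/weak-convergence machinery needed to identify the limit of the set-valued relation at $D_x\dot y=0$. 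Your route is more general (it would survive weakening $(\psi3)$ to mere convexity plus coercivity), but heavier; the paper's is shorter precisely because it recognizes that $\partial\Psi^\eps_{t,\omega}$ is not merely maximal monotone but strongly monotone, hence invertible with a Lipschitz inverse.

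One point you do not address, which the paper's route handles automatically, is measurability in $\omega$. The precise statement of the theorem (Section~\ref{sec:wellposedness}) produces $u^\eps\in\HH^1\big([0,T];\LL^2(\Omega;\HBC)\big)$, i.e.\ a single Bochner-measurable function of $(t,\omega)$. The paper's Picard iteration is run directly on $\LL^2([0,\tau]\times\Omega;\HBC)$, so joint measurability comes for free. Your construction fixes $\omega$ and produces a pathwise solution $u^\eps_\omega\in\HH^1([0,T];\HBC)$ for each $\omega$; to reach the form needed for the two-scale analysis you would still have to argue that $\omega\mapsto u^\eps_\omega$ is measurable and square-integrable, e.g.\ by verifying that the discrete iterates $y^k$ are jointly $(\omega,x)$-measurable and that the limit passage preserves this. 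This is not hard given the uniqueness you establish, but it is an extra step your sketch omits. The rest of your argument (coercivity and strict convexity of the incremental functionals, the discrete energy bound, the $\limsup$ estimate for $\int\!\int\sigma^\tau\cdot D_x\dot y^\tau$ obtained from the discrete convexity inequality combined with lower semicontinuity of $\Phi^\eps_\omega$ at the final time, and the monotonicity-based uniqueness) is sound.
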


\medskip
  
\noindent
It follows that the model described above is well--posed for any $\eps>0$. Our main theorem is then the following homogenization result.
  
\begin{theorem}
\label{th:main}
As $\eps\to0$, the sequence of functions $u^\eps_\omega$ solution to~\eqref{eq:EvolutionProblem} converges in an appropriate sense (which is made precise in Section~\ref{sec:stoc2scale}) to $u^\star$, and $D_xu^\eps_\omega$ converges to $D_xu^\star+\theta$, where $u^\star$ is independent of $\omega\in\Omega$, and $\theta$ is a time--dependent vector field which depends on both $x$ and $\omega$, and additionally satisfies $\Ex[\theta]=0$. Moreover, for almost every $t\in[0,T]$, the pair $(u^\star,\theta)$ is the unique solution to the system of inclusions
\begin{gather}
  -\div_x\bg[\int_\Omega \partial_\xi\psi_0\left(\omega,D_x\dot{\overline{y}}+D_x\dot{u}^\star+\dot{\theta}\right) + D_\xi W_0\b(\omega,D_x\overline{y}+D_xu^\star+\theta\b)\dPr(\omega)\bg]\ni0,
  \label{eq:homogenised_eqn}
\\
  -\div_\omega \left[\partial_\xi\psi_0\left(\omega,D_x\dot{\overline{y}}+D_x\dot{u}^\star +\dot\theta\right)+D_\xi W_0(\omega,D_x\overline{y}+D_xu^\star+\theta)\right]\ni 0,
  \label{eq:corrector_eqn}
\end{gather}
with boundary and initial conditions
\begin{equation*}
u^\star|_{\GDir}=0, \qquad u^\star(0)=0 \qquad\text{and} \qquad \theta(0)=0,
\end{equation*}
in an appropriate functional space, detailed in Section~\ref{sec:limit}.
\end{theorem}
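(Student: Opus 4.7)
The plan follows the standard three--step template for stochastic homogenization of evolution problems: (i) derive a priori bounds uniform in $\eps$ for $u^\eps_\omega$, (ii) extract a stochastic two--scale convergent subsequence using the theory of~\cite{BMW94} recalled in Section~\ref{sec:stoc2scale}, and (iii) pass to the limit in~\eqref{eq:EvolutionProblem} using monotonicity of $\partial\Psi^\eps_{t,\omega}$ combined with a Minty--type argument. For step~(i), the natural approach is to test~\eqref{eq:EvolutionProblem} against $\dot u^\eps_\omega(t)$ and integrate in time. Using ellipticity~$(A3)$ for the elastic term, the quadratic lower bound~\eqref{eq:psi_lower_bound} for the dissipation, and the assumed regularity of $\overline y$, one obtains that $u^\eps_\omega$ is bounded in $\LL^\infty(0,T;\HBC)$ and $\dot u^\eps_\omega$ in $\LL^2(0,T;\HBC)$, uniformly in $\eps$ and $\Pr$--almost surely in $\omega$. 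The subdifferential bound~\eqref{eq:subdiff_bnd} then furnishes an $\LL^2(0,T;\LL^2(\Dom)^d)$ bound on any measurable selection $\sigma^\eps$ of the dissipative stress.

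\textbf{Two--scale compactness.} Step~(ii) consists in applying the stochastic two--scale compactness theorem to extract a (non--relabelled) subsequence such that $u^\eps_\omega \wto u^\star$ and $\dot u^\eps_\omega \wto \dot u^\star$ weakly in $\LL^2(0,T;\HBC)$, and such that there exist $\theta(t,\cdot,\cdot)\in\L2Pot$ with $\Ex[\theta]=0$, and $\sigma^\star(t,x,\omega)$, with the property that $D_xu^\eps_\omega$ two--scale converges to $D_xu^\star+\theta$, $D_x\dot u^\eps_\omega$ to $D_x\dot u^\star+\dot\theta$, and $\sigma^\eps$ to $\sigma^\star$. That the corrector field $\theta$ must lie in $\L2Pot$ (the stochastic analogue of a pointwise gradient, orthogonal to constants in $\omega$) is precisely the information that later produces the corrector equation~\eqref{eq:corrector_eqn}, while independence of $u^\star$ from $\omega$ follows from ergodicity.

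\textbf{Passing to the limit and uniqueness.} For step~(iii), I would insert into the weak formulation of~\eqref{eq:EvolutionProblem} two--scale test pairs of the form $\varphi(t,x)+\eps\,w(t,x,T(x/\eps)\omega)$, with $\varphi\in\HBC$ and $w(t,x,\cdot)$ valued in the stochastic potential space. Passing to the two--scale limit yields that, for almost every $t\in[0,T]$,
\begin{equation*}
\int_\Dom \int_\Omega \bigl[\sigma^\star + A_0(\omega)(D_x\overline y+D_xu^\star+\theta)\bigr]\cdot\bigl[D_x\varphi+\eta\bigr]\dPr(\omega)\dx = 0,
\end{equation*}
for every $\varphi\in\HBC$ and every $\eta\in\L2Pot$. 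Choosing $\eta\equiv 0$ produces~\eqref{eq:homogenised_eqn} (the macroscopic equation), while choosing $\varphi\equiv 0$ produces~\eqref{eq:corrector_eqn} (the corrector equation), \emph{provided} we can show that $\sigma^\star(t,x,\omega)\in\partial_\xi\psi_0\bigl(\omega,D_x\dot{\overline y}+D_x\dot u^\star+\dot\theta\bigr)$. This last identification is the principal obstacle, since $\sigma^\eps$ is the image of a weakly converging sequence by the set--valued, nonlinear map $\partial_\xi\psi$. The standard remedy is a Minty--type argument: starting from the monotonicity inequality $(\sigma^\eps-\tau^\eps)\cdot(D_x\dot u^\eps_\omega-\eta^\eps)\geq 0$ for admissible oscillating competitors $\tau^\eps\in\partial_\xi\psi(\omega,\cdot/\eps,\eta^\eps)$, passing to the two--scale limit, and combining the result with a $\limsup$ bound on $\int_0^T\!\int_\Dom\sigma^\eps\cdot D_x\dot u^\eps_\omega\dx\dt$ obtained from a two--scale version of the energy identity of step~(i), together with a two--scale $\liminf$ inequality for the quadratic elastic energy $\Phi^\eps_\omega$. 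Once the inclusion is established, uniqueness of $(u^\star,\theta)$ follows by subtracting two solutions, testing with their difference, and using strong convexity~$(\psi3)$ together with ellipticity~$(A3)$; this uniqueness then upgrades convergence from the extracted subsequence to the full limit $\eps\to0$.
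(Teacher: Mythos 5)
Your proposal is sound in outline and arrives at the same two--scale limit system, but it takes a genuinely different route to identify the limiting inclusion. You propose the classical Minty monotonicity device: insert admissible oscillating test pairs $\varphi + \eps\, w(\cdot,T(\cdot/\eps)\omega)$ into the weak formulation, pass to the two--scale limit to obtain an equation involving an unknown stress $\sigma^\star$, and then identify $\sigma^\star \in \partial_\xi\psi_0(\omega, D_x\dot{\overline y}+D_x\dot u^\star + \dot\theta)$ from the monotonicity inequality $(\sigma^\eps-\tau^\eps)\cdot(D_x\dot u^\eps-\eta^\eps)\geq 0$ plus a $\limsup$ bound on $\int \sigma^\eps\cdot D_x\dot u^\eps$. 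The paper instead reformulates the inclusion at every $\eps>0$ as the integrated Fenchel--Young \emph{equality}~\eqref{eq:zero_energy}, $\int \Psi^\eps_{t,\omega}[\dot u^\eps] + (\Psi^\eps_{t,\omega})^*[-\nabla\Phi^\eps_{t,\omega}[u^\eps]] - \langle -\nabla\Phi^\eps_{t,\omega}[u^\eps],\dot u^\eps\rangle = 0$, establishes separate two--scale $\liminf$ lower bounds for each of the three terms, and concludes that the nonnegative limiting integrand must vanish almost everywhere. Since the integrand is (by Fenchel) pointwise nonnegative, its vanishing is equivalent to the desired subdifferential inclusion. This is the ``energy--dissipation principle'' route favoured in the evolutionary $\Gamma$--convergence literature~\cite{MielNotes}.

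What each route buys: your Minty argument avoids any direct manipulation of the Legendre conjugate $(\Psi^\eps_{t,\omega})^*$ of the nonsmooth integral dissipation functional; the paper handles that object by first passing to the Moreau envelope $\psi_\eta$ (Lemma~\ref{th:convex_approx}), using its $\CC^1$ regularity to characterise $(\Psi^\eta_{t,\omega})^*$ through the Euler--Lagrange equations of the associated variational problem (Lemma~\ref{th:dual_char}), taking the two--scale limit at fixed $\eta$ along a diagonal subsequence, and finally sending $\eta\to 0$. Your approach therefore spares the reader this double--limit argument. On the other hand, your route requires (i) a measurable, jointly $(t,\omega,x)$--integrable selection $\sigma^\eps$ of the dissipative stress --- you state this casually but it is a genuine point of care, and the paper's regularisation sidesteps it because the minimiser of the regularised dual problem is unique and depends measurably on the data; and (ii) the $\limsup$ bound $\limsup_\eps \int\sigma^\eps\cdot D_x\dot u^\eps \leq \int\sigma^\star\cdot(D_x\dot u^\star+\dot\theta)$, which requires the weak lower semicontinuity of the quadratic elastic energy under two--scale convergence and the limiting force--balance equation evaluated at the admissible test pair $(\dot u^\star,\dot\theta)$. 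These are exactly the ingredients absorbed in the paper's Step~4 (equations \eqref{eq:liminf_term3}--\eqref{eq:liminf_term7}) and Step~5, so the two approaches are of comparable technical weight. A final caution: to close the Minty argument you need the monotonicity inequality for a sufficiently rich class of competitors $(\eta^\eps, \tau^\eps)$; since $\partial_\xi\psi$ is set--valued, one should either invoke a measurable--selection theorem for $\tau^\eps$ or replace the strict monotonicity step by the equivalent Fenchel inequality on integrands, at which point the argument converges with the paper's.
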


\medskip

\blue{Equations~\eqref{eq:homogenised_eqn} and~\eqref{eq:corrector_eqn} should be understood as follows: there exists some vector valued function $G^\star$, which depends on $(t,\omega,x)$ and satisfies $G^\star(t,\omega,x) \in \partial_\xi\psi_0\left(\omega,D_x\dot{\overline{y}}+D_x\dot{u}^\star +\dot\theta\right)$ for almost any $(t,\omega,x)$, such that
\begin{gather}
  -\div_x\bg[\int_\Omega G^\star(t,\omega,x) + D_\xi W_0\b(\omega,D_x\overline{y}+D_xu^\star+\theta\b)\dPr(\omega)\bg] = 0,
  \label{eq:homogenised_eqn_bis}
\\
-\div_\omega \left[G^\star(t,\omega,x) + D_\xi W_0(\omega,D_x\overline{y}+D_xu^\star+\theta)\right] = 0.
\label{eq:corrector_eqn_bis}
\end{gather}
}
Equation~\eqref{eq:homogenised_eqn} may be interpreted as a macroscopic force balance, where the total stress is computed as an expectation over random variations. Equation~\eqref{eq:corrector_eqn} may be interpreted as a corrector problem, corresponding to a microscopic force balance. In general, \eqref{eq:homogenised_eqn} and~\eqref{eq:corrector_eqn} are coupled since $D_x\overline{y}$, $D_xu^\star$ and $\theta$ all depend upon $x\in\Dom$ and the time $t$.

\medskip

At this stage, we have yet to define the operator $-\div_\omega$ appearing in~\eqref{eq:corrector_eqn}. Since we have not assumed any topology on the probability space $\Omega$, the definition is not obvious. On the other hand, \red{this operator (along with other stochastic differentiation operators) has a familiar interpretation as a derivative} with respect to a periodic variable when considering the form of periodic homogenization covered by our result. To help elucidate our main result, in Section~\ref{sec:periodic} we provide an example of the result in the periodic setting previously described in Section~\ref{sec:examples}, and we consider the one-dimensional case in Section~\ref{sec:oneD_fred}. We postpone a precise definition of $-\div_\omega$ (and of other differentiation operators) in the fully stochastic case until Section~\ref{sec:stoc_setup}.

\subsubsection{A periodic ``checkerboard'' example}
\label{sec:periodic}

To provide some intuition about the result stated in Theorem~\ref{th:main}, we return to the checkerboard example considered in Section~\ref{sec:examples}. As noted at the end of that section, if $\#\mathcal{S}=1$, then we claim that our result in this case corresponds to periodic homogenization with a random shift of the coordinate frame. To see this, we note that the mapping $\omega=(q,Z)\mapsto q$ is a bijective measure--preserving map from $(\Omega,\Sigma,\Pr)$ to $(Q,\mathcal{M}^d,\Leb^d)$, so we may identify $\omega\in\Omega$ with $q\in Q$. In this case, we may write, for any $x\in \R^d$, that
\begin{equation*}
  \psi(q,x,\xi) = \frac12 \, \nu_0(x+q) \, |\xi|^2 + \mu_0(x+q) \, |\xi| \qquad\text{and}\qquad W(q,x,\xi) = \frac12 \, \xi\cdot A_0(x+q)\xi,
\end{equation*}
where we recall that in this case $\nu_0$, $\mu_0$ and $A_0$ are all $Q$-periodic functions. Note in passing that, in this case, the function $D_\sigma\psi^*$, which is useful below, has a simple expression:
\begin{equation*}
  D_\sigma\psi^*(q,x,\sigma)=
  \begin{cases}
    0 & \text{when $|\sigma|\leq\mu_0(x+q)$},
    \\
    \displaystyle \frac{|\sigma|-\mu_0(x+q)}{\nu_0(x+q)} \, \red{\frac{\sigma}{|\sigma|}} & \text{when $|\sigma|\geq\mu_0(x+q)$}.
  \end{cases}
\end{equation*}
We now write Theorem~\ref{th:main} in this case. We set $y^\star=\overline{y}+u^\star$, where $u^\star$ is the homogenized solution introduced in Theorem~\ref{th:main}. We note that, in the periodic setting, the corrector vector field takes the form of the gradient of a periodic function, $\theta=D_qu^1$ (for an explanation, refer to the discussion at the end of Section~\ref{sec:stress_and_strain}). We fix $u^1$ by further imposing that $\dps \int_Q u^1(t,q,x) \, \dd q=0$ for any $t$ and $x$. We may then write the corrector equation~\eqref{eq:corrector_eqn} as
\begin{equation*}
  -\div_q \B[\partial_\xi\psi_0(q,D_x\dot{y}^\star+D_q\dot{u}^1)+A_0(q)\b[D_xy^\star+D_q u^1\b]\B]\ni 0,
\end{equation*}
which should be interpreted as asserting that there exists a stress field $\Sigma(t,q,x)$ which is weakly divergence--free in $q$ such that
\begin{equation}
  \label{eq:micro_stress_bal}
  \partial_\xi\psi_0\b(q,D_x\dot{y}^\star(t,x) +D_q\dot{u}^1(t,q,x)\b)\ni\Sigma(t,q,x) - A_0(q)\b[D_xy^\star(t,x)+D_q u^1(t,q,x)\b].
\end{equation}
This equation should be viewed as a balance of stresses, and $\Sigma$ is an unknown of the problem.
  
Applying the properties~\eqref{eq:LFEquivalence} of the Legendre--Fenchel transform, we recast the above equation as the equivalent rate equation
\begin{equation*}
  D_q\dot{u}^1(t,q,x)=D_\sigma\psi^*_0\B(q,\Sigma(t,q,x) -A_0(q)\b[D_xy^\star(t,x)+D_q u^1(t,q,x)\b]\B)-D_x\dot{y}^\star(t,x)
\end{equation*}
subject to the conditions
$$
\int_Q u^1(t,q,x) \, \dd q=0 \qquad \text{and} \qquad u^1(0,q,x)=0,
$$
the second condition stemming from the fact that $D_q u^1(0,q,x) = \theta(0,q,x) = 0$. We may also integrate~\eqref{eq:micro_stress_bal} over $q \in Q$ to obtain
\begin{multline}
  \left\{ \int_Q \xi(t,q,x) \dq \ \middle| \ \xi(t,q,x) \in \partial_\xi\psi_0\b(q,D_x\dot{y}^\star(t,x) + D_q\dot{u}^1(t,q,x)\b) \ \ \text{ a.e. $(t,q,x)\in[0,T]\times Q\times\Dom$} \right\}
  \\
  \ni \int_Q \Sigma(t,q,x) \dq - \int_Q A_0(q)\b[D_xy^\star(t,x)+D_q u^1(t,q,x)\b]\dq,
  \label{eq:mean_stress_bal}
\end{multline}
an equation which is useful below.

We now turn to the macroscopic equation~\eqref{eq:homogenised_eqn}. To interpret it in this case, we define the homogenized potential
\begin{equation}\label{eq:psibar}
  \overline{\psi}(t,x,\xi):=\int_Q\psi_0\b(q,\xi+D_q\dot{u}^1(t,q,x)\b)\,\dd q.
\end{equation}
Setting $\dps A_{\rm mean}=\int_Q A_0(q)\dq$, the homogenized force balance~\eqref{eq:homogenised_eqn} should then be read as requiring that there exists a weakly divergence--free stress field $\sigma^\star(t,x)$ such that 
\begin{equation} \label{eq:enpc}
  \partial_\xi\overline{\psi}\b(t,x,D_x\dot{y}^\star(t,x)\b) \ni \sigma^\star(t,x) - A_{\rm mean} \, D_xy^\star(t,x) - \int_Q A_0(q) D_q u^1(t,q,x) \, \dd q.
\end{equation}
Noting the definition~\eqref{eq:psibar} of $\overline{\psi}$ and the relation~\eqref{eq:mean_stress_bal}, it may be inferred that the \red{field $\sigma^\star$ can be chosen to} satisfy
\begin{equation*}
  \sigma^\star(t,x) = \int_Q \Sigma(t,q,x) \dq,
\end{equation*}
so that \red{in this case} $\sigma^\star$ can be interpreted as the average stress at point $x$ and at time $t$.
  
Invoking~\eqref{eq:LFEquivalence}, we recast~\eqref{eq:enpc} as the rate equation
\begin{gather*}
  D_x\dot{y}^\star(t,x) = D_\sigma\big(\overline{\psi}\big)^*\bigg(t,x,\sigma^\star(t,x) - A_{\rm mean} \, D_xy^\star(t,x) - \int_Q A_0(q) D_q u^1(t,q,x) \, \dd q \bigg), \\
  \text{subject to} \quad y^\star(t)\b|_{\GDir} = \overline{y}(t) \qquad \text{and} \qquad y^\star(0) = \overline{y}(0).
\end{gather*}
\red{We note that, in the first equality above, we know that the subdifferential of $\big(\overline{\psi}\big)^*$ is single--valued thanks to Theorem~26.3 in~\cite{Rockafellar}.} In summary, we have obtained the following corollary of Theorem~\ref{th:main} in this periodic setting.

\begin{corollary}
In the case of the model described in Section~\ref{sec:examples} with $\#\mathcal{S}=1$, the sequence of solutions $u^\eps$ to~\eqref{eq:EvolutionProblem} converges in an appropriate sense to $u^\star$, and $D_xu^\eps$ converges to $D_xu^\star+D_qu^1$, where $u^\star(t,x)$ and $u^1(t,q,x)$ solve the equations
\begin{align*}
  D_q\dot{u}^1 &= D_\sigma\psi^*_0\left(q,\Sigma - A_0(q) \b[D_x\overline{y}+D_xu^\star+D_q u^1\b]\right)-D_x\dot{\overline{y}}-D_x\dot{u}^\star,
  \\
  D_x\dot{u}^\star &= D_\sigma\big(\overline{\psi}\big)^*\left(t,x,\sigma^\star-A_{\rm mean} (D_x\overline{y}+D_xu^\star)-\int_Q A_0(q) D_q u^1 \dq \right)-D_x\dot{\overline{y}},
\end{align*}
with
$$
\int_Q u^1(t,q,x) \, \dd q=0, \qquad u^1(0,q,x)=0, \qquad u^\star(t)\b|_{\GDir} = 0 \qquad \text{and} \qquad u^\star(0) = 0,
$$
where the stress fields $\sigma^\star$ and $\Sigma$ are divergence-free (respectively in $x$ and $q$), satisfy $\dps \sigma^\star(t,x)=\int_Q \Sigma(t,q,x)\dq$, and are chosen in order to ensure that the boundary and initial conditions for $u^\star$ and $u^1$ are satisfied.
\end{corollary}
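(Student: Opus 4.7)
\emph{Proof proposal.}

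My plan is to obtain this statement as a direct specialisation of Theorem~\ref{th:main} to the periodic setting, essentially formalising the derivation already sketched between~\eqref{eq:psibar} and~\eqref{eq:enpc}. The first step is to exploit the measure-preserving bijection $\omega=(q,Z)\mapsto q$ from $(\Omega,\Sigma,\Pr)$ to $(Q,\mathcal{M}^d,\Leb^d)$, which is legitimate precisely because $\#\mathcal{S}=1$ makes the $Z$ factor trivial. Under this identification, the dynamical system $T(x)$ becomes translation on $Q$ (modulo $\Z^d$), every stationary function becomes a $Q$-periodic function, the expectation $\Ex$ coincides with the Lebesgue average $\int_Q \cdot \dq$, and the abstract stochastic differentiation operators defined in Section~\ref{sec:stoc_setup} reduce to the classical $D_q$ and $\div_q$ acting on $Q$-periodic functions. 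In particular, $A(q,x)=A_0(q+x)$ and $\psi(q,x,\xi)=\tfrac12\nu_0(q+x)|\xi|^2+\mu_0(q+x)|\xi|$, in agreement with the formulae preceding the corollary statement.

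The second step is to characterise the corrector field $\theta$. Using the description of potential (mean-zero) vector fields in the stochastic two-scale framework recalled at the end of Section~\ref{sec:stress_and_strain}, such fields correspond, in the present periodic setting, to gradients $D_qu^1$ of $Q$-periodic scalar functions $u^1$; the normalisation $\int_Q u^1(t,q,x)\dq=0$ fixes $u^1$ uniquely, and the initial condition $u^1(0,\cdot)=0$ follows from $\theta(0,\cdot)=0$ together with this normalisation.

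The third step is to translate the inclusions~\eqref{eq:homogenised_eqn}--\eqref{eq:corrector_eqn} into concrete form. With $y^\star=\overline{y}+u^\star$, \eqref{eq:corrector_eqn_bis} becomes exactly the microscopic balance~\eqref{eq:micro_stress_bal} for some stress $\Sigma(t,q,x)$ which is weakly $q$-divergence-free, while~\eqref{eq:homogenised_eqn_bis} becomes the macroscopic balance~\eqref{eq:enpc} for some stress $\sigma^\star(t,x)$ which is weakly $x$-divergence-free. Integrating~\eqref{eq:micro_stress_bal} over $Q$ and comparing with~\eqref{eq:enpc} via the definition~\eqref{eq:psibar} of $\overline\psi$ yields the relation $\sigma^\star(t,x)=\int_Q\Sigma(t,q,x)\dq$, as already derived in the text.

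Finally, to pass from these subdifferential inclusions to the stated rate equations, I would invoke the equivalence~\eqref{eq:LFEquivalence} between the subdifferentials of a convex function and of its Legendre--Fenchel conjugate. The explicit expression for $\psi_0^*$ computed in Section~\ref{sec:InitialExample} shows that $D_\sigma\psi_0^*$ is well defined and single-valued, so~\eqref{eq:micro_stress_bal} inverts into the stated ODE for $D_q\dot u^1$. For the macroscopic equation, strict convexity of $\overline\psi$ (inherited from assumption~$(\psi3)$) combined with Theorem~26.3 of~\cite{Rockafellar} ensures that $\big(\overline{\psi}\big)^*$ is $C^1$ in its third variable, so~\eqref{eq:enpc} is equivalent to the stated ODE for $D_x\dot u^\star$. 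The main technical obstacle is essentially notational but nontrivial: one must carefully verify that the abstract stochastic divergence on $(\Omega,\Sigma,\Pr)$ introduced in Section~\ref{sec:stoc_setup} really does coincide, under the identification $\Omega\simeq Q$, with the classical operator $\div_q$ on $Q$-periodic functions, and that the class of potential fields with vanishing expectation is exactly the class of gradients of mean-zero $Q$-periodic functions. Both identifications are standard but depend on the specific shift structure of the dynamical system in this example.
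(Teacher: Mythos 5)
Your proposal reproduces the paper's own derivation in Section~\ref{sec:periodic} step by step: the measure-preserving identification $\Omega\simeq Q$ when $\#\mathcal{S}=1$, the characterisation $\theta=D_qu^1$ via the periodic Poincar\'e inequality noted at the end of Section~\ref{sec:stress_and_strain}, the interpretation of~\eqref{eq:homogenised_eqn}--\eqref{eq:corrector_eqn} as~\eqref{eq:enpc} and~\eqref{eq:micro_stress_bal}, integration over $Q$ to relate $\sigma^\star$ and $\Sigma$, and the Legendre--Fenchel inversion justified by Theorem~26.3 of~\cite{Rockafellar}. The `technical obstacle' you flag (verifying that $\div_\omega$ coincides with $\div_q$ and that mean-zero potential fields are gradients of mean-zero $Q$-periodic functions) is precisely the point the paper also leaves implicit, so your argument is correct and at the same level of rigour as the paper's.
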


\medskip

One of the key observations which arises from this example is that, even in this simple case (which can even be simplified further by choosing $\psi(q,x,\xi)= |\xi|^2/2$), the dissipation potential in the limit typically depends on the entire history of loading: $\overline{\psi}$ is indeed defined using a time--dependent corrector. This demonstrates that, while the solutions converge to a well--defined limit when $\eps \to 0$, computing approximations to the limit problem is very challenging.

\subsubsection{The one--dimensional case}
\label{sec:oneD_fred}

We consider here the one--dimensional case, and further assume, for the sake of simplicity, that $\dps \psi\big(\omega,x,\xi\big) = \frac12 \, \nu(\omega,x) \, |\xi|^2$ for some random stationary coefficient $\nu$ satisfying $0<\underline{\nu} \leq \nu(\omega,x) \leq \overline{\nu}$ for any $(\omega,x)\in\Omega\times\R$. We recall from the definition of stationarity given in~\eqref{eq:Stationarity} that there exists $\nu_0:\Omega\to\R$ such that $\nu(\omega,x) = \nu_0\b(T(x)\omega\b)$. We are going to explicitly compute the solution $u^\eps$ to the evolution equation~\eqref{eq:EvolutionProblem}, pass to the limit $\eps \to 0$ on that explicit formula, and check that the so-obtained function $u^\star$ is the unique solution to the homogenized problem~\eqref{eq:homogenised_eqn}--\eqref{eq:corrector_eqn}.

\medskip

The evolution equation~\eqref{eq:EvolutionProblem} reads
$$
\forall v \in \HBC, \quad \int_\Dom \nu\Big(\omega,\frac{x}{\eps}\Big) \, D_x \dot{y}^\eps \, D_x v + \int_\Dom A\Big(\omega,\frac{x}{\eps}\Big) \, D_x y^\eps \, D_x v = 0,
$$
where $y^\eps = \overline{y}+u^\eps$. This implies that there exists some function $\sigma^\eps$, depending on $t$ and $\omega$ but independent of $x$, such that
\begin{equation} \label{eq:seattle0}
\nu\Big(\omega,\frac{x}{\eps}\Big) \, D_x \dot{y}^\eps + A\Big(\omega,\frac{x}\eps\Big) \, D_x y^\eps = \sigma^\eps,
\end{equation}
which is exactly~\eqref{eq:ForceInclusion}. Dividing by $\nu(\omega,x/\eps)$, we obtain, for any $x \in \Dom$, an ordinary differential equation that we can integrate. Using that the initial condition is independent from $\eps$ and $\omega$, we find that
\begin{equation} \label{eq:seattle}
D_x y^\eps(t,\omega,x) = D_x y(0,x) \exp\left(-\frac{A(\omega,x/\eps)}{\nu(\omega,x/\eps)} \, t\right) + \int_0^t \frac{\sigma^\eps(s,\omega)}{\nu(\omega,x/\eps)} \, \exp\left(\frac{A(\omega,x/\eps)}{\nu(\omega,x/\eps)} \, \red{(s-t)} \right) \, ds.
\end{equation}
Using the boundary condition $\dps \int_\Dom D_x y^\eps(t,\omega,x) \, dx = \ell(t)$ for some $\ell$ independent of $\eps$ and $\omega$, we get, by integrating~\eqref{eq:seattle} on $\Dom$, that
\begin{equation} \label{eq:seattle2}
\ell(t) = f^\eps(t,\omega) + \int_0^t \sigma^\eps(s,\omega) \, g^\eps(\red{s-t},\omega) \, ds
\end{equation}
with
$$
  f^\eps(t,\omega) = \int_\Dom D_x y(0,x) \exp\left(-\frac{A(\omega,x/\eps)}{\nu(\omega,x/\eps)} \, t\right) dx,
  \qquad
  g^\eps(s,\omega) = \int_\Dom \frac{1}{\nu(\omega,x/\eps)} \, \exp\left(\frac{A(\omega,x/\eps)}{\nu(\omega,x/\eps)} \, s \right) \, dx.
$$
We prove in Section~\ref{sec:boundedness} below (in the general case) that $y^\eps(t,\omega,\cdot)$ and $\dot{y}^\eps(t,\omega,\cdot)$ are bounded in $H^1(\Dom)$ by constants independent of $t \in [0,T]$ and $\omega \in \Omega$. They thus weakly converge (up to a subsequence extraction) to $y^\star(t,\omega,\cdot)$ and $\dot{y}^\star(t,\omega,\cdot)$ respectively, \blue{for almost every $(t,\omega)\in[0,T]\times\Omega$}. The relation~\eqref{eq:seattle0} and the bounds on $y^\eps$ and $\dot{y}^\eps$ mentioned above show that, \blue{again for almost every $\omega$, the sequence $\sigma^\eps(\cdot,\omega)$ is bounded in $\LL^2(0,T)$: it thus weakly converges (up to a subsequence extraction) to $\sigma^\star(\cdot,\omega)$.}

We now pass to the limit $\eps \to 0$ in~\eqref{eq:seattle}. Using the ergodic theorem, we obtain that
\begin{equation} \label{eq:seattle_hom}
D_x y^\star(t,\omega,x) = D_x y(0,x) \, h^\star(t) + \int_0^t \sigma^\star(s,\omega) \, g^\star(\red{s-t}) \, ds
\end{equation}
\blue{for almost every $(t,\omega, x)$,} with
$$
  h^\star(t) = \Ex \left[ \exp\left(-\frac{A_0(\omega)}{\nu_0(\omega)} \, t\right) \right],
  \qquad
  g^\star(s) = \Ex \left[ \frac{1}{\nu_0(\omega)} \, \exp\left(\frac{A_0(\omega)}{\nu_0(\omega)} \, s \right) \right].
$$
Passing to the limit $\eps \to 0$ in~\eqref{eq:seattle2}, we also obtain
\begin{equation} \label{eq:seattle2_hom}
\ell(t) = \int_\Dom D_x y(0,x) \, h^\star(t) \, dx + |\Dom| \int_0^t \sigma^\star(s,\omega) \, g^\star(\red{s-t}) \, ds.
\end{equation}
Collecting~\eqref{eq:seattle_hom} and~\eqref{eq:seattle2_hom}, we deduce that
\begin{equation} \label{eq:seattle_hom_fin}
D_x y^\star(t,\omega,x) = \Big( D_x y(0,x) - |\Dom|^{-1} \ell(0) \Big) h^\star(t) + |\Dom|^{-1} \ell(t),
\end{equation}
which shows that $D_x y^\star$ is indeed deterministic.

\medskip

We now consider the homogenized problem~\eqref{eq:homogenised_eqn}--\eqref{eq:corrector_eqn}. \red{It turns out that, in the one--dimensional case, the vanishing of the stochastic divergence in~\eqref{eq:corrector_eqn} is analogous with the vanishing of a usual derivative, allowing} us to deduce that there exists a function $\Sigma$, independent of $\omega$, such that
$$
\nu_0(\omega) \left( D_x\dot{y}^\star + \dot\theta \right) + A_0(\omega) \left( D_xy^\star + \theta \right) = \Sigma(t,x).
$$
Equation~\eqref{eq:homogenised_eqn} reads $\dps 0 = \div_x \Ex[ \Sigma(t,x) ] = \div_x \Sigma(t,x)$, hence $\Sigma$ only depends on time. Integrating the above ordinary differential equation and using that $\theta(t=0,\omega,x)=0$, we obtain that
$$
D_x y^\star(t,x) + \theta(t,\omega,x) = D_x y^\star(0,x) \exp\left(-\frac{A_0(\omega)}{\nu_0(\omega)} \, t\right) + \int_0^t \frac{\Sigma(s)}{\nu_0(\omega)} \, \exp\left(\frac{A_0(\omega)}{\nu_0(\omega)} \, (\red{s-t}) \right) \, ds.
$$
Taking the expectation and using that $\Ex[\theta] = 0$, we deduce that
\begin{equation} \label{eq:seattle_5}
D_x y^\star(t,x) = D_x y^\star(0,x) \, h^\star(t) + \int_0^t \Sigma(s) \, g^\star(\red{s-t}) \, ds.
\end{equation}
Integrating~\eqref{eq:seattle_5} over $\Dom$ and using the boundary conditions, we deduce that
\begin{equation} \label{eq:seattle_6}
\ell(t) = \ell(0) \, h^\star(t) + |\Dom| \int_0^t \Sigma(s) \, g^\star(\red{s-t}) \, ds.
\end{equation}
Collecting~\eqref{eq:seattle_5} and~\eqref{eq:seattle_6}, we recover~\eqref{eq:seattle_hom_fin}.




\subsection{Hysteretic behaviour}
\label{sec:hysteresis}

We conclude this introductory section by showing that, in a particular instance of our model, the system exhibits hysteretic behaviour which persists at asymptotically low strain rates, as observed numerically on Figure~\ref{fig:rates}.
  
As in Section~\ref{sec:examples}, we suppose here that
\begin{equation*}
  \psi\left(\omega,\frac{x}{\eps},\xi\right) = \frac12 \, \nu\left(\omega,\frac{x}{\eps}\right)|\xi|^2 + \mu\left(\omega,\frac{x}{\eps}\right) |\xi|,
\end{equation*}
where $\mu,\nu:\Omega\times\R^d\to\R$ are random stationary coefficients which additionally satisfy
\begin{equation*}
  0<\underline{\nu} \leq \nu(\omega,x)\leq \overline{\nu}\quad\text{and}\quad 0< \underline{\mu}\leq \mu(\omega,x)\leq \overline{\mu}\quad \text{for any $(\omega,x)\in\Omega\times\R^d$}.
\end{equation*}
As a reference domain, we choose $\Dom = \prod_{i=1}^d(a_i,b_i)$. We prescribe the boundary data by defining $\overline{y}_\delta(t,x) := \ell(\delta \, t) \, x_1$, where $\ell$ is a 1--periodic $\CC^1$ function and $\delta$ is a parameter that scales the time (the smaller $\delta$ is, the slower the loading is). We impose a Dirichlet boundary condition on $\GDir = \left( \{a_1\} \times \prod_{i=2}^d(a_i,b_i) \right) \cup \left( \{b_1\} \times \prod_{i=2}^d(a_i,b_i) \right)$, and let the material free to relax on the remainder of the boundary.
  
For fixed $\eps$ and $\omega$, the energy dissipated over one time period by the system driven by the boundary condition $y_\delta|_\GDir=\overline{y}_\delta|_\GDir$ may be expressed as
\begin{align*}
  {\cal E}^\eps_\omega = \text{Energy Dissipated}
  &=\int_0^{1/\delta}\int_\Dom\partial_\xi\psi\left(\omega,\frac{x}{\eps},D_x\dot{y}_\delta\right)\cdot D_x\dot{y}_\delta\dx\dt
  \\
  &=\int_0^{1/\delta}\int_\Dom \nu\left(\omega,\frac{x}{\eps}\right) |D_x\dot{y}_\delta|^2+\mu\left(\omega,\frac{x}{\eps}\right) |D_x\dot{y}_\delta|\dx\dt
  \\
  &\geq\int_0^{1/\delta}\int_\Dom \nu\left(\omega,\frac{x}{\eps}\right)|D_x\dot{y}_\delta\cdot e_1|^2 +\mu\left(\omega,\frac{x}{\eps}\right) |D_x\dot{y}_\delta\cdot e_1|\dx\dt.
\end{align*}
Using the lower bounds on $\mu$ and $\nu$ and applying Jensen's inequality, the boundary conditions and a change of time variable, we find that
\begin{align*}
  \int_0^{1/\delta} \int_\Dom \nu \, |D_x\dot{y}_\delta \cdot e_1|^2 \dx\dt
  &\geq
  \int_0^{1/\delta}\frac{\underline{\nu}}{\Leb^d(\Dom)} \bg| \int_\Dom D_x\dot{y}_\delta \cdot e_1 \, \dx\bg|^2\dt\\
  &=
  \int_0^{1/\delta}\frac{\underline{\nu}}{\Leb^d(\Dom)} \bg| \int_\Dom \left[ \delta \, \dot{\ell}(\delta \, t) + D_x\dot{u}_\delta \cdot e_1 \right] \dx\bg|^2\dt\\
  &=
  \Leb^d(\Dom) \, \underline{\nu} \, \delta \int_0^1 \left| \dot{\ell}(t) \right|^2\dt,
\end{align*}
where $\Leb^d(\Dom)$ denotes the volume of the domain $\Dom$, and likewise
\begin{equation*}
  \int_0^{1/\delta} \int_\Dom \mu \, |D_x\dot{y}_\delta \cdot e_1| \dx\dt
  \geq
  \int_0^{1/\delta} \underline{\mu}\,\bg| \int_\Dom \left[ \delta \, \dot{\ell}(\delta \, t) + D_x \dot{u}_\delta \cdot e_1 \right] \dx\bg|\dt
  =
  \Leb^d(\Dom) \, \underline{\mu} \int_0^1 \left| \dot{\ell}(t) \right| \dt.
\end{equation*}
Noting that the second lower bound is independent of the rate $\delta$, we obtain that
\begin{equation}
  \label{eq:bound_hyst}
{\cal E}^\eps_\omega = \text{Energy dissipated in one period} \geq \Leb^d(\Dom) \, \underline{\mu} \int_0^1 \left| \dot{\ell}(t) \right| \dt,
\end{equation}
showing that no matter how slowly the material is loaded and unloaded, the material always dissipates energy (note also that since $\mu$ has units of energy per unit volume, this lower bound has units of energy, as expected). We note that this lower bound is independent of $\eps$ and $\omega$. For the homogenized model we identify, similar results can be indeed obtained showing that the above property is preserved in the limit $\eps\to0$.

\medskip

We now discuss the link between that dissipated energy and the fact that the system shows hysteresis, namely that the area within the loops shown on Figure~\ref{fig:rates} is positive. We consider here the one--dimensional setting of Section~\ref{sec:InitialExample}, and write the mechanical energy at time $t$ as
$$
E^\eps_\omega(t) = \int_\Dom W\left(\omega,\frac{x}{\eps},D_xy(t,x)\right)\dx.
$$
Then, over one period, we compute
$$
E^\eps_\omega(1/\delta) - E^\eps_\omega(0)
=
\int_0^{1/\delta} \frac{\dd E^\eps_\omega}{\dt}\dt
=
\int_0^{1/\delta} \int_\Dom A\left(\omega,\frac{x}{\eps}\right)D_x y\cdot D_x \dot{y}\,\dx\dt.
$$
Using the equilibrium equation~\eqref{eq:ForceInclusion}, we deduce that
$$
E^\eps_\omega(1/\delta) - E^\eps_\omega(0)
\in
\int_0^{1/\delta} \int_\Dom \Big( \overline{\sigma}(t,\omega) - \partial_\xi\psi\left(\omega,\frac{x}{\eps},D_x \dot{y}\right) \Big) D_x \dot{y}\,\dx\dt,
$$
hence, using the specific expression of $\psi$,
$$
E^\eps_\omega(1/\delta) - E^\eps_\omega(0)
=
\int_0^{1/\delta} \overline{\sigma}(t,\omega) \left( \int_\Dom D_x \dot{y} \dx\right) \, \dt - {\cal E}^\eps_\omega.
$$
An application of Stokes Theorem demonstrates that the first term in the above right-hand side is the area within the loop, whereas the last term is the dissipated energy. If the system reaches a limit cycle, we expect the energy difference on the left--hand side to tend to zero over successive periods (indeed, this is what we observe numerically). Under the assumption that we have found such a periodic solution, and that the above left--hand side is therefore $0$,  we find that
$$
\text{Area within the loop} = \text{Energy dissipated in one period}.
$$
Since the energy dissipated in one period satisfies~\eqref{eq:bound_hyst}, we obtain that the area within the loop remains bounded away from 0 for any loading rate $\delta$. This demonstrates that the model exhibits stress--strain hysteresis which persists at arbitrarily slow rates of loading.

\section{Functional analytic setting}
\label{sec:setup}

To mathematically cast the model we study in a correct manner, we first introduce spaces of admissible displacements. Since we consider an evolution problem, we first define the ``spatial'' function spaces and next describe their evolutionary counterparts.
  
\subsection{Space of displacements}
\label{sec:Deformations}

We consider a Lipschitz domain $\Dom\subset\R^d$ which corresponds to the reference configuration of a $d$--dimensional viscoelastic body. The boundary $\partial\Dom$ of the domain is partitioned into relatively open sets
\begin{equation*}
  \partial \Dom = \GDir\cup\GNeu \quad\text{with}\quad \GDir\cap\GNeu=\emptyset \quad\text{and}\quad \GDir \neq\emptyset.
\end{equation*}
The set $\GDir$ is the portion of the boundary subject to Dirichlet conditions and $\GNeu$ is the portion left free, and is therefore stress--free under natural boundary conditions. We consider scalar--valued functions $y:\Dom\to\R$ which correspond to displacements experienced by this body. The weak gradient of a function $y$ is denoted $D_xy:\Dom\to\R^d$. We assume that $\GDir$ has non--zero capacity and set
\begin{equation*}
  \HBC := \overline{\CC^1_0(\Dom\cup \GNeu)}^{\HH^1(\Dom)},
\end{equation*}
where $\CC^1_0(\Dom\cup\GNeu)$ is the space of real--valued continuously differentiable functions which are compactly supported in $\Dom\cup\GNeu$. We write $\HBC$ in place of $\HBC(\Dom)$, since $\Dom$ is fixed throughout.
  
Our assumption that $\GDir$ has non--zero capacity entails that the Poincar\'e inequality holds: there exists a constant $C>0$ depending only on $\Dom$ and $\GDir$ such that
\begin{equation} \label{eq:poincare}
  \|u\|_{\LL^2(\Dom)}\leq C\|D_xu\|_{\LL^2(\Dom)}\quad\text{for any $u\in\HBC$}.
\end{equation}
It follows that the mapping $u\mapsto\|u\|_{\HBC}:=\|D_xu\|_{\LL^2(\Dom)}$ is a norm on $\HBC$ equivalent to the restriction of the $\HH^1(\Dom)$ norm to $\HBC$. Furthermore
\begin{equation*}
  (u,v)_{\HBC} := \int_\Dom D_xu\cdot D_xv\dx\quad\text{for any $u,v\in\HBC$}
\end{equation*}
defines an inner product which induces the norm $\|\cdot\|_{\HBC}$.
  
We write $\HBCPrime$ to denote the space of bounded linear functionals acting on $\HBC$, and denote by $\<\cdot,\cdot\>_{\HBC}:\HBCPrime\times\HBC\to\R$ the corresponding duality product. Whenever there exists $\sigma\in \LL^2(\Dom)^d$ such that $f\in\HBCPrime$ may be represented as
\begin{equation*}
  \<f,u\>_{\HBC} = \int_\Dom \sigma \cdot D_xu\dx\quad\text{for any $u\in\HBC$},
\end{equation*}
we write $f =-\div_x\sigma$. Upon applying the Riesz Representation Theorem, we note that any $f\in\HBCPrime$ has such a representation, although without further conditions this is non--unique.
  
\subsection{Stochastic displacement space}
\label{sec:stoc_setup}

Our aim is to study the evolution problem~\eqref{eq:EvolutionProblem} and to identify its homogenized limit when $\eps \to 0$. To that end, we use a variant of the theory of stochastic two--scale convergence in the mean introduced in~\cite{BMW94}. We recall here the notion of stochastic weak derivatives, following the exposition in Section~2 of~\cite{BMW94}.
  
We consider the family of unitary operators $\{U(x)\}_{x\in\R^d}$ acting on $f\in\LL^2(\Omega)$ via
\begin{equation*}
  [U(x)f](\omega) = f\b(T(x)\omega\b),
\end{equation*}
where $T(x)$ is the ergodic dynamical system defined in Section~\ref{sec:Randomness}. Since $\LL^2(\Omega)$ is assumed to be separable, it is possible to define \emph{stochastic partial derivatives} $\delta_1,\ldots,\delta_d$ as the infinitesimal generators $\delta_j$ of the strongly continuous unitary group representations, where $x_j$ varies around 0 while the other coordinates remain fixed at $0$ (see Eq.~(2.1) of~\cite{BMW94}). More precisely, for any $f\in\LL^2(\Omega)$ such that the limit makes sense, we set
$$
\delta_j f = \lim_{h\to 0} \frac{U(h\mathrm{e}_j)f-f}{h},
$$
where $\mathrm{e}_j\in\R^d$ is the $j$th Euclidean basis vector, and the limit is taken in $\LL^2(\Omega)$. The operators $i\delta_j$ are self--adjoint and commute on their joint domain of definition, denoted $\mathscr{D}(\Omega)$. For a multi--index $\alpha\in\N^d$, we define the operator
\begin{equation*}
  \delta^\alpha := \delta_1^{\alpha_1} \, \delta_2^{\alpha_2} \dots \delta_d^{\alpha_d}.
\end{equation*}
We define the space $\mathscr{D}^\infty(\Omega)$ of ``test functions'' as
\begin{equation*}
  \mathscr{D}^\infty(\Omega):=\b\{\phi\in\LL^\infty(\Omega) \, \bsep \, \delta^\alpha \phi \in\LL^\infty(\Omega)\cap\mathscr{D}(\Omega) \text{ for all $\alpha\in\N^d$} \b\}.
\end{equation*}
The fact that this space is non--empty is proven in Lemma~2.1 of~\cite{BMW94}, using an explicit construction via ``convolution'' of an $\LL^\infty(\Omega)$ function with the Fourier transform of a function in $\CC^\infty_0(\R^d)$.

For any $f\in\LL^2(\Omega)$, the stochastic weak derivative $\delta^\alpha f$ of $f$ is the distribution in $\mathscr{D}^\infty(\Omega)'$ defined via
\begin{equation*}
  \<\delta^\alpha f,\phi\> := (-1)^{|\alpha|}\int_\Omega f \, \delta^\alpha \phi \, \dPr\quad\text{for any $\phi\in\mathscr{D}^\infty(\Omega)$},
\end{equation*}
where $|\alpha| = \sum_{i=1}^d \alpha_i$. The space $\HH^1(\Omega)$ is defined to be the subspace of $\LL^2(\Omega)$ for which there exists $f_j\in\LL^2(\Omega)$ (for any $1 \leq j \leq d$) such that
\begin{equation*}
  \int_\Omega f_j\, \phi \, \dPr = \<\delta_j f,\phi\>\quad\text{for any $\phi\in\mathscr{D}^\infty(\Omega)$}.
\end{equation*}
As usual, we abuse notation by writing $\delta_j f \in\LL^2(\Omega)$ to denote $f_j$ in the above definition whenever $f\in\HH^1(\Omega)$.

Let $u\in\HH^1(\Omega)$. It is convenient to define the \emph{stochastic gradient} $D_\omega u\in\LL^2(\Omega)^d$ of $u$ by
$$
D_\omega u=(\delta_1u, \ldots,\delta_d u).
$$
We note that $\HH^1(\Omega)$ is a Hilbert space for the usual inner product
\begin{equation*}
  (u,v)_{\HH^1(\Omega)} := (u,v)_{\LL^2(\Omega)}+(D_\omega u,D_\omega v)_{\LL^2(\Omega)} =(u,v)_{\LL^2(\Omega)}+\sum_{j=1}^d (\delta_j u, \delta_jv)_{\LL^2(\Omega)}.
\end{equation*}    
Furthermore, $\mathscr{D}^\infty(\Omega)$ contains a countable subset which is dense in this space (see Lemma~2.1 of~\cite{BMW94}).

\begin{remark} \label{rem:evident}
For any $u \in \HH^1(\Omega)$, we have $\dps \int_\Omega D_\omega u \dPr = 0$. In other words, stochastic gradients are always mean zero fields. This is a direct consequence of the definition of $D_\omega$ and of the fact that $\Pr$ is an invariant measure with respect to $T$, which implies that $\dps \int_\Omega u(T(x) \omega) \dPr$ is independent of $x$.
\end{remark}


\subsection{Stress and strain spaces}
  \label{sec:stress_and_strain}
  
In addition to the displacement spaces defined above, we also distinguish appropriate spaces in which to consider the corresponding stresses and strains.

First, we note that $\LL^2(\Dom)^d$ is a Hilbert space when endowed with the inner product
\begin{equation*}
  (\xi,\zeta)_{\LL^2(\Dom)^d}:=\int_\Dom \xi\cdot \zeta\dx.
\end{equation*}
Following the notation used in Chapter~1 of~\cite{JKO94}, we define a space of strains which are compatible with a displacement in $\HBC$,
\begin{equation*}
  \L2Pot(\Dom) = \left\{ \xi \ \middle| \ \xi = D_xu\text{ for some $u\in\HBC$} \right\} \subset \LL^2(\Dom)^d.
\end{equation*}
This is a closed subspace of $\LL^2(\Dom)^d$, since $\HBC$ is a Hilbert space where the Poincar\'e inequality~\eqref{eq:poincare} holds. Moreover, it has orthogonal complement
\begin{equation*}
  \L2Pot(\Dom)^\perp = \left\{\xi\in\LL^2(\Dom)^d\,\middle|\, \int_\Dom\xi\cdot D_xu\dx = 0 \text{ for any $u\in\HBC$} \right\}.
\end{equation*}
Stated differently, this is the space of all $\xi\in\LL^2(\Dom)^d$ which satisfy $-\div_x\,\xi=0$ in $\HBCPrime$, in the notation introduced at the end of Section~\ref{sec:Deformations}. This is an appropriate space for certain stresses we consider in the sequel.

\smallskip

In a similar way (again, see Chapter~7 of~\cite{JKO94}), we define the stochastic version of these spaces, 
\begin{gather*}
  \L2Pot(\Omega) = \overline{\left\{ \theta \ \middle| \ \theta = D_\omega u\text{ for some $u\in\HH^1(\Omega)$} \right\}}^{\ \LL^2(\Omega)^d},
  \\
  \L2Pot(\Omega)^\perp = \left\{ \xi\in\LL^2(\Omega)^d \ \middle| \ \int_\Omega \xi\cdot D_\omega u \dPr = 0 \text{ for any $u\in\HH^1(\Omega)$} \right\}.
\end{gather*}
We note that both of these spaces are separable, a property which they inherit from $\LL^2(\Omega)$. Again, the former space corresponds to ``stochastic strains'', and the latter to certain ``stochastic stresses''. Unlike in the case of $\L2Pot(\Dom)$, we note that we cannot in general assert that $\theta\in\L2Pot(\Omega)$ implies that $\theta=D_\omega u$ for some $u\in\HH^1(\Omega)$, since we cannot guarantee that a Poincar\'e inequality of the form
\begin{equation*}
  \|u\|_{\LL^2(\Omega)}\leq C\,\|D_\omega u\|_{\LL^2(\Omega)}
\end{equation*}
holds for any $u\in\HH^1(\Omega)$. This is the reason why we have defined $\L2Pot(\Omega)$ as the $\LL^2(\Omega)^d$ closure of this set. We remark that, in the example given in Section~\ref{sec:periodic}, a Poincar\'e inequality does hold on $\HH^1_{\mathrm{per}}(Q)$, and therefore in this case we are able to conclude that any $\theta\in\L2Pot(\Omega)$ satisfies $\theta=D_\omega u$ for some $u\in\HH^1(\Omega)$.

\begin{remark} \label{rem:evident2}
  As a direct consequence of Remark~\ref{rem:evident}, we see that any $\theta \in \L2Pot(\Omega)$ satisfies $\dps \int_\Omega \theta \dPr = 0$.
\end{remark}
  
\subsection{Evolution and corrector spaces}

Finally, we introduce Bochner spaces corresponding to trajectories of the evolution problems we study. These spaces are $\LL^2\b([0,T];X\b)$ and $\HH^1\b([0,T];X\b)$ where $X$ is a Banach space, being respectively the space of square Bochner--integrable functions $u:[0,T]\to X$ and the space of square Bochner--integrable functions with square Bochner--integrable weak derivative in time. We write $u(t)$ to mean the value of $u$ in $X$ at $t\in[0,T]$. These spaces are Banach spaces when endowed with the norms
\begin{equation*}
  \|u\|_{\LL^2([0,T];X)}:=\bg(\int_0^T \|u(t)\|^2_X\dt\bg)^{1/2} \qquad\text{and}\qquad\|u\|_{\HH^1([0,T];X)}:=\bg(\int_0^T \left( \|\dot{u}(t)\|^2_X+\|u(t)\|^2_X \right) \dt\bg)^{1/2},
\end{equation*}
where here and throughout the remainder of our analysis, $\dot{u}$ denotes the time derivative of $u$. Moreover, in the case where $X$ is a separable space, we have the isometric isomorphism
\begin{equation*}
  \Big( \LL^2\b([0,T];X\b) \Big)' \simeq \LL^2\b([0,T];X'\b),
\end{equation*}
and we identify these spaces throughout our analysis. We also note that $\LL^2\b([0,T];X\b)$ and $\HH^1\b([0,T];X\b)$ are reflexive whenever $X$ is reflexive, and separable whenever $X$ is separable.
  
We also consider the space $\CC\b([0,T];X\b)$, which is the space of continuous maps from $[0,T]$ into the Banach space $X$. We note that any $u\in\HH^1\b([0,T];X\b)$ has a representative in $\CC\b([0,T];X\b)$, which satisfies
\begin{equation*}
  u(t_1)-u(t_0) = \int^{t_1}_{t_0} \dot{u}(s)\ds\quad\text{for any $t_1,t_0\in [0,T]$}.
\end{equation*}

\blue{
Similarly to functions which are Bochner--integrable and have weak derivatives in time, at various points we consider Bochner--integrable corrector functions with values in $\LL^2(\Dom;\L2Pot(\Omega))$, and stresses with values in $\LL^2(\Dom;\L2Pot(\Omega)^\perp)$. Both of the latter spaces have norm
 \begin{equation*}
   \b\|\theta\b\|_{\LL^2(\Dom;\LL^2(\Omega))}:=\bigg(\int_\Dom\int_\Omega\b|\theta(\omega,x)\b|^2\dPr(\omega)\dx\bigg)^{1/2}.
 \end{equation*}

\subsection{Measurability considerations and representations}
\label{sec:Correctors}

In the sequel, we manipulate functions which depend on time $t\in[0,T]$, random realisation $\omega\in\Omega$ and spatial position $x\in\Dom$. We assume that such functions are measurable on the product space $[0,T]\times\Omega\times\Dom$, and one example of natural function spaces in which to consider these function under our assumptions is $\LL^2([0,T]\times\Omega\times\Dom)^n$, with $n=1$ (for scalar-valued functions) or $n=d$ (for vector-valued functions). The inner product on these spaces is
\begin{equation*}
  (\theta,\xi)_{\LL^2([0,T]\times\Omega\times\Dom)^n}:=\int_0^T\!\!\int_\Omega\int_\Dom \theta(t,\omega,x)\cdot \xi(t,\omega,x)\dx\dPr(\omega)\dt,
\end{equation*}
where we use Fubini's theorem to write the integral over the product space. We note that, as a consequence of the results contained in Chapter~III, Sections~11.16--11.17 of~\cite{DunfordSchwartzI}, we may equivalently require that such functions are represented in $\LL^2\b([0,T]\times\Omega;\LL^2(\Dom)\b)$ or $\LL^2\b([0,T];\LL^2\b(\Omega;\LL^2(\Dom)\b)\b)$ (or indeed any other permutation of this ordering) and vice versa. Throughout this article, we will identify such representations notationally without comment.

As particular examples of this convention, we will freely switch between thinking of:
\begin{itemize}
\item $u\in\HH^1\b([0,T];\LL^2(\Omega;\HBC)\b)$, as being identified with $u\in\LL^2([0,T]\times\Omega\times\Dom)$ and such that $\dot{u}\in\LL^2([0,T]\times\Omega\times\Dom)$ with $D_xu,D_x\dot{u}\in\LL^2([0,T]\times\Omega\times\Dom)^d$;
\item $\theta \in \HH^1\b([0,T];\LL^2(\Dom;\L2Pot(\Omega))\b)$, as being identified with $\theta \in \LL^2([0,T]\times\Omega\times\Dom)^d$ such that $\dot{\theta} \in \LL^2([0,T]\times\Omega\times\Dom)^d$ with $\theta(t,\cdot,x),\dot{\theta}(t,\cdot,x)\in\L2Pot(\Omega)$ for almost every $(t,x)\in[0,T]\times\Dom$; and
\item $\zeta\in\LL^2\b([0,T];\LL^2(\Dom;\L2Pot(\Omega)^\perp)\b)$, as being identified with $\zeta\in\LL^2([0,T]\times\Omega\times\Dom)^d$ such that $\zeta(t,\cdot,x)\in\L2Pot(\Omega)^\perp$ for almost every $(t,x)\in[0,T]\times\Dom$.
\end{itemize}
Since we never require a pointwise identification of the functions we consider (we seek only weak solutions to the problem we consider), we mainly opt to refer to functions as being represented in the most restrictive setting they can be considered for notational brevity.}

\subsection{Stochastic two--scale convergence}
\label{sec:stoc2scale}

In this section, we recall and slightly extend the definition of stochastic two--scale convergence given in~\cite{BMW94}. Throughout, we write $\LL^2\b([0,T]\times\Omega\times\Dom\b)$ to mean the space of measurable square--integrable functions with respect to the product measure $\Leb^1\otimes\Pr\otimes\Leb^d$, with the usual sigma--algebra generated by $\mathcal{M}^1\otimes\Sigma\otimes\mathcal{M}^d$.
  
The compactness results herein are very close to those given in~\cite{BMW94}, which in turn share much in common with those in~\cite{Allaire94}: the generalization we make for the subsequent analysis is that we need to obtain a form of compactness such that, loosely, we may select subsequences of a bounded sequence $u^\eps(t)$ which two--scale converge in space for almost every time $t$ and realization $\omega$.

An important class required for the definition of two--scale convergence is that of \emph{admissible functions}. These are functions which satisfy the correct measurability properties to allow us to define two--scale convergence.

\begin{definition}
  \label{def:admissible}
A function $\psi\in\LL^2([0,T]\times\Omega\times\Dom)$ is \emph{admissible} if $\psi_T:(t,\omega,x)\mapsto \psi(t,T(x)\omega,x)$ satisfies $\psi_T\in\LL^2([0,T]\times\Omega\times\Dom)$.
\end{definition}

\medskip

Note that, for some $\psi\in\LL^2([0,T]\times\Omega\times\Dom)$, the function $\psi_T$ may not be measurable. This motivates the above definition. As particular examples of admissible functions, it is straightforward to use the ideas of~\cite{BMW94} to check that functions in $\CC([0,T];\CC(\overline{\Dom},\mathscr{D}^\infty(\Omega))$ are admissible, as is any function of the form
\begin{equation*}
  (t,\omega,x)\mapsto f(t) \, g(x) \, h(\omega) \qquad \text{where $f\in\CC^\infty_0([0,T])$, $g\in\CC^1_0(\Dom\cup\GNeu)$ and $h\in\mathscr{D}^\infty(\Omega)$}.
\end{equation*}
Series of functions of this form are dense in $\LL^2([0,T]\times\Omega\times\Dom)$. This is an important observation which we use repeatedly in the analysis which follows. With the class of admissible functions prescribed, we define two--scale convergence as follows.
  
\begin{definition}
  \label{def:two-scale}
A sequence $u^\eps\in\LL^2\b([0,T]\times\Omega\times\Dom\b)$ is said to \emph{two--scale converge} to $u^\star\in\LL^2\b([0,T]\times\Omega\times\Dom\b)$ if, for any admissible $\psi$, we have
\begin{equation*}
  \lim_{\eps\to 0} \int_0^T\!\!\blue{\int_\Omega\int_\Dom} u^\eps(t,\omega,x) \, \psi\left(t,T\left(\frac{x}{\eps}\right)\omega,x\right) \dx \dPr(\omega) \dt
  =
  \int_0^T\!\!\blue{\int_\Omega\int_\Dom} u^\star(t,\omega,x) \, \psi(t,\omega,x) \dx \dPr(\omega) \dt.
\end{equation*}
\end{definition}
\blue{This definition is a generalisation of the one given in Definition~3.3 in~\cite{BMW94} to incorporate time dependence. Indeed, if we consider a sequence $u^\eps$ and limit $u^\star$ which are both independent of $t$, then the two definitions coincide.}

\medskip

The following lemma extends the fundamental compactness result concerning two--scale convergence to the case of two--scale convergence in the sense of the above definition. For comparison, similar results are given in Theorem~3.4 of~\cite{BMW94} and in Theorem~1.2 of~\cite{Allaire94}.
  
\begin{lemma}
\label{th:2scale_L2}
Let $u^\eps$ be a bounded sequence in $\LL^2\b([0,T]\times\Omega\times\Dom\b)$. Then there exists a subsequence of $u^\eps$ and some $u^\star\in\LL^2\b([0,T]\times\Omega\times\Dom\b)$ such that, along this subsequence, $u^\eps$ two--scale converges to $u^\star$.
\end{lemma}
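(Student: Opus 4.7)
The plan is to follow the standard duality approach for two-scale compactness, adapted to the stochastic setting with time dependence. For each $\eps>0$, I would define a linear functional $L^\eps$ on admissible test functions by
\begin{equation*}
L^\eps(\psi) := \int_0^T\!\!\int_\Omega\int_\Dom u^\eps(t,\omega,x)\,\psi\!\left(t,T(x/\eps)\omega,x\right)\dx\dPr(\omega)\dt,
\end{equation*}
and then produce a weak-$*$ limit of these functionals along a subsequence, identifying it via the Riesz representation theorem.

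First I would establish uniform boundedness of $L^\eps$. Writing $\psi^\eps_T(t,\omega,x):=\psi(t,T(x/\eps)\omega,x)$, the admissibility of $\psi$ ensures joint measurability on $[0,T]\times\Omega\times\Dom$, so Fubini applies, and the invariance of $\Pr$ under $T$ (assumption~(2) in Section~\ref{sec:Randomness}) yields
\begin{equation*}
\|\psi^\eps_T\|^2_{\LL^2([0,T]\times\Omega\times\Dom)} = \int_0^T\!\!\int_\Dom\int_\Omega \psi(t,T(x/\eps)\omega,x)^2\dPr(\omega)\dx\dt = \|\psi\|^2_{\LL^2([0,T]\times\Omega\times\Dom)}.
\end{equation*}
Cauchy--Schwarz then gives $|L^\eps(\psi)|\leq \|u^\eps\|_{\LL^2}\|\psi\|_{\LL^2}\leq C\|\psi\|_{\LL^2}$ uniformly in $\eps$, where $C$ bounds the sequence $u^\eps$ in $\LL^2$.

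Next, I would extend each $L^\eps$ to a bounded linear functional $\widetilde L^\eps$ on all of $\LL^2([0,T]\times\Omega\times\Dom)$. This uses the density of admissible functions in $\LL^2$, which is provided by the remark immediately following Definition~\ref{def:admissible} that finite series of functions $f(t)g(x)h(\omega)$ with $f\in\CC^\infty_0([0,T])$, $g\in\CC^1_0(\Dom\cup\GNeu)$ and $h\in\mathscr{D}^\infty(\Omega)$ are both admissible and dense. The extension is unique and retains the same operator norm bound. The space $\LL^2([0,T]\times\Omega\times\Dom)$ is separable, as a product of three separable $\LL^2$ spaces (the separability of $\LL^2(\Omega;\Pr)$ being assumed in Section~\ref{sec:Randomness}). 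By the separable form of the Banach--Alaoglu theorem, the bounded sequence $\widetilde L^\eps$ admits a weak-$*$ convergent subsequence $\widetilde L^{\eps_k}$ with limit $\widetilde L^\star$; by Riesz representation, there exists $u^\star\in\LL^2([0,T]\times\Omega\times\Dom)$ with $\widetilde L^\star(\psi)=\int u^\star\psi$. Restricting the weak-$*$ convergence $\widetilde L^{\eps_k}(\psi)\to\widetilde L^\star(\psi)$ to admissible $\psi$, where $\widetilde L^{\eps_k}=L^{\eps_k}$, yields precisely the two-scale convergence required by Definition~\ref{def:two-scale}.

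The main technical obstacle, which is isolated up front by the admissibility hypothesis, is the measurability of the composition $\psi(t,T(x/\eps)\omega,x)$ on the product space: for a generic $\psi\in\LL^2$ there is no reason for this to be jointly measurable, and Fubini would fail. Admissibility is precisely the condition required to make the calculation above rigorous and to define $L^\eps$ as a bona fide integral. Once this point is handled, the remaining steps (uniform bound, extension by density, weak-$*$ compactness, Riesz identification) are a routine adaptation of the deterministic two-scale convergence proof of Allaire/Nguetseng, with the measure-preserving action $T$ playing the role of periodic shifts, and the extra time variable merely adding a factor of $[0,T]$ that poses no new difficulty.
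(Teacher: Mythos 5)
Your argument is correct and follows essentially the same strategy as the paper's proof: use invariance of $\Pr$ under $T$ together with Cauchy--Schwarz to get a uniform bound $|L^\eps(\psi)|\le C\|\psi\|_{\LL^2}$, exploit separability of $\LL^2([0,T]\times\Omega\times\Dom)$ for a compactness argument, and identify the limit functional via Riesz representation. The only organizational difference is that you extend each $L^\eps$ to a bounded functional on all of $\LL^2$ and then invoke sequential weak-$*$ compactness of the dual, whereas the paper diagonalizes explicitly over a countable dense set $S$ of admissible functions and only extends the \emph{limit} functional at the end; these are the same argument packaged slightly differently.
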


\red{
Before providing a proof of this result, we make some remarks about the relationship between two--scale convergence and weak convergence.

\begin{remark}
  If we choose an admissible test function $\psi$ in Definition~\ref{def:two-scale} which is deterministic, then as a consequence of the definition, any measurable representative of the map $(t,x)\mapsto\int_\Omega u^\eps(t,\omega,x)\dPr(\omega)$ weakly--converges to $(t,x)\mapsto\int_\Omega u^\star(t,\omega,x)\dPr(\omega)$ in $\LL^2([0,T]\times\Dom)$. Two--scale convergence thus entails weak convergence of expectations, but does not necessarily imply weak convergence in $\LL^2([0,T]\times\Dom\times\Omega)$. Viewing these functions as expectations conditioned on the space and time variables explains the use of the terminology `two--scale convergence in the mean' used in~\cite{BMW94}.
\end{remark}
}

\begin{proof}
We follow the proof of Theorem~3.4 of~\cite{BMW94}. Since $\LL^2(\Omega)$ is separable, there exists a countable and dense set $S$ of admissible functions in $\LL^2\b([0,T]\times\Omega\times\Dom\b)$. For each $\psi \in S \subset \LL^2\b([0,T]\times\Omega\times\Dom\b)$ (which is admissible), we have, employing the Cauchy--Schwarz inequality and the invariance of $\Pr$ with respect to $T$, that
\begin{equation*}
  \bg|\int_0^T\!\!\blue{\int_\Omega\int_\Dom}u^\eps(t,\omega,x) \, \psi\left(t,T\left(\frac{x}{\eps}\right)\omega,x\right)\dx\, \mathrm{d}\Pr(\omega)\dt\bg|\leq C \|\psi\|_{\LL^2([0,T]\times\Omega\times\Dom)}
\end{equation*}
where $C = \sup_\eps \|u^\eps\|_{\LL^2([0,T]\times\Omega\times\Dom)}$. We can thus find a subsequence $\{ u^{\eps'} \}$ (which depends on $\psi$) such that
\begin{equation} \label{eq:chicago}
  \lim_{\eps' \to 0} \int_0^T\!\!\blue{\int_\Omega\int_\Dom}u^{\eps'}(t,\omega,x) \, \psi\left(t,T\left(\frac{x}{\eps'}\right)\omega,x\right)\dx\, \mathrm{d}\Pr(\omega)\dt
\end{equation}
exists. Since $S$ is countable, we can infer from a diagonalization argument that there exists a subsequence $\{ u^{\eps'} \}$ (independent of the elements in $\psi$) such that, for any $\psi \in S$, the limit~\eqref{eq:chicago} exists. Let $X$ be the subspace of admissible functions $\psi \in \LL^2\b([0,T]\times\Omega\times\Dom\b)$ such that the limit~\eqref{eq:chicago} (along the subsequence we have just defined) exists. Then $X$ is a vector subspace of $\LL^2\b([0,T]\times\Omega\times\Dom\b)$ and the limit~\eqref{eq:chicago} defines a bounded linear functional (denoted $g$) on $X$. Recalling that $S \subset X$ and that $S$ is dense in $\LL^2\b([0,T]\times\Omega\times\Dom\b)$, we can extend $g$ to a bounded linear functional on $\LL^2\b([0,T]\times\Omega\times\Dom\b)$. Using the Riesz theorem, this functional $g$ may be identified with some $u^\star \in \LL^2\b([0,T]\times\Omega\times\Dom\b)$. This concludes the proof of Lemma~\ref{th:2scale_L2}.
\end{proof}

We also have the following result, which is the one we use below. Again, this is a slight generalization of Proposition~1.14(i) of~\cite{Allaire94} and of Theorem~3.7(b) of~\cite{BMW94}.
  
\begin{lemma}
\label{th:2scale_H1}
Let $u^\eps$ be a bounded sequence of functions in $\HH^1\b([0,T];\LL^2(\Omega;\HBC)\b)$. Then there exist a scalar-valued function $u^\star\in\HH^1([0,T];\HBC)$, a vector-valued function $\theta\in\HH^1\b([0,T];\LL^2\b(\Dom;\L2Pot(\Omega)\b)\b)$ and a subsequence along which:
\begin{enumerate}
\item $u^\eps$ and $\dot{u}^\eps$ respectively two--scale converge to $u^\star$ and $\dot{u}^\star$, \label{i}
\item $D_xu^\eps$ and $D_x\dot{u}^\eps$ two--scale converge to $D_xu^\star+\theta$ and $D_x\dot{u}^\star+\dot{\theta}$, \label{ii}
\end{enumerate}
where each of these two--scale limits \blue{is identified with a representative in} $\LL^2([0,T]\times\Omega\times\Dom)$ (or $\LL^2([0,T]\times\Omega\times\Dom)^d$ respectively);
\begin{enumerate}[resume]
\item $u^\eps(0,\cdot\,,\cdot\,)$ and $u^\eps(T,\cdot\,,\cdot\,)$ respectively two--scale converge to $u^\star(0,\cdot\,)$ and $u^\star(T,\cdot\,)$, \label{iii}
\item $D_xu^\eps(0,\cdot\,,\cdot\,)$ and $D_xu^\eps(T,\cdot\,,\cdot\,)$ respectively two--scale converge to $D_xu^\star(0,\cdot\,)+\theta(0,\cdot\,,\cdot\,)$ and $D_xu^\star(T,\cdot\,)+\theta(T,\cdot\,,\cdot\,)$, \label{iv}
\end{enumerate}
where each of these two--scale limits \blue{is identified with a representative in} $\LL^2(\Omega\times\Dom)$ (or $\LL^2(\Omega\times\Dom)^d$ respectively), and two--scale convergence holds in the sense of \blue{Definition~3.3 of~\cite{BMW94}.}

Moreover, if $u^\eps(0,\omega,x)=u_0(x)$ for any $\eps$ and for almost every $(\omega,x)\in \Omega \times \Dom$ with $u_0 \in \HBC$, then $u^\star(0,x)=u_0(x)$ in $\HH^1(\Dom)$ and $\theta(0,\omega,x)=0$ in $\LL^2(\Dom\times\Omega)^d$.
\end{lemma}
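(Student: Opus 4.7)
The plan is to extract two-scale limits via Lemma~\ref{th:2scale_L2} and then identify their structure through integration by parts combined with ergodicity. Since $u^\eps$ is bounded in $\HH^1([0,T];\LL^2(\Omega;\HBC))$, each of $u^\eps$, $\dot u^\eps$, $D_xu^\eps$ and $D_x\dot u^\eps$ is bounded in the appropriate $\LL^2([0,T]\times\Omega\times\Dom)$ space, and a diagonal extraction yields a common subsequence along which these four sequences two-scale converge to limits which I provisionally denote $u^\star$, $v^\star$, $\xi$ and $\chi$. First I will commute the time derivative with the two-scale limit by testing $\dot u^\eps$ (respectively $D_x\dot u^\eps$) against admissible tensor products of the form $f'(t)\phi(x)h(T(x/\eps)\omega)$ with $f\in\CC^1_c((0,T))$, $\phi\in\CC^1_c(\Dom)$ and $h\in\mathscr{D}^\infty(\Omega)$, and integrating by parts in time; this yields $v^\star=\dot u^\star$ and $\chi=\dot\xi$ in the distributional sense.

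Next I will establish that $u^\star$ is independent of $\omega$ by testing $D_xu^\eps$ against the admissible vector field $\eps f(t)\phi(x)h(T(x/\eps)\omega)e_j$. Integration by parts in $x$ produces the term $-\int u^\eps\phi f(\delta_jh)(T(x/\eps)\omega)$, in which the $\eps^{-1}$ arising from differentiating $h(T(\cdot/\eps)\omega)$ exactly cancels the prefactor $\eps$; all remaining terms retain an $\eps$ and vanish in the limit. Passing $\eps\to 0$ and using density of tensor-product test functions gives $\delta_ju^\star=0$ for each $j$ in the sense of weak stochastic derivatives, after which ergodicity of $T$ forces $u^\star$ to be $\omega$-independent. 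The same argument applied to $\dot u^\eps$ treats $\dot u^\star$. Testing $D_xu^\eps$ against a purely deterministic $\phi\in\CC^1_c(\Dom)$ and integrating by parts identifies the mean as $\Ex[\xi]=D_xu^\star$ (with the Dirichlet trace inherited from $u^\eps$, placing $u^\star\in\HH^1([0,T];\HBC)$), so I define the corrector $\theta:=\xi-D_xu^\star$, which is automatically mean zero in $\omega$.

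The crucial step is to verify that $\theta(t,\cdot,x)\in\L2Pot(\Omega)$ for almost every $(t,x)$. To this end I test $D_xu^\eps$ against admissible fields of the form $f(t)\phi(x)G(T(x/\eps)\omega)$, where $G\in\mathscr{D}^\infty(\Omega)^d$ is chosen to satisfy $\sum_j\delta_jG_j=0$ (ensuring $G\in\L2Pot(\Omega)^\perp$). Integration by parts in $x$ produces an $\eps^{-1}(\delta\cdot G)(T(x/\eps)\omega)$ contribution that vanishes by the divergence-free condition; passing to the limit in the remaining terms and substituting $\xi=D_xu^\star+\theta$ yields
\begin{equation*}
\int_0^T\int_\Omega\int_\Dom\theta(t,\omega,x)\cdot f(t)\phi(x)G(\omega)\dx\dPr(\omega)\dt=0.
\end{equation*}
This is the main technical obstacle of the proof, since it requires establishing density of such smooth stochastically divergence-free test fields in $\L2Pot(\Omega)^\perp$; I expect this to be achievable by a mollification argument in the spirit of the construction of $\mathscr{D}^\infty(\Omega)$ from Lemma~2.1 of~\cite{BMW94}. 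Combining $\theta(t,\cdot,x)\in\L2Pot(\Omega)$ with the previously-established $\chi=\dot\xi$ yields $\chi=D_x\dot u^\star+\dot\theta$, and since $\L2Pot(\Omega)$ is closed and therefore preserved under time differentiation, $\theta\in\HH^1([0,T];\LL^2(\Dom;\L2Pot(\Omega)))$.

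For the trace statements (iii)--(iv), I fix an admissible $\Phi(\omega,x)$ and consider $F^\eps(t):=\int_\Omega\int_\Dom u^\eps(t,\omega,x)\Phi(T(x/\eps)\omega,x)\dx\dPr$, which is uniformly bounded in $\HH^1(0,T)$ and therefore Hölder-$\tfrac12$ equicontinuous on $[0,T]$. Arzelà--Ascoli extracts a uniformly convergent subsequence whose limit must agree with the continuous function $t\mapsto\int_\Omega\int_\Dom u^\star(t,\omega,x)\Phi(\omega,x)\dx\dPr$ by weak testing against $g\in\LL^2(0,T)$; evaluation at $t\in\{0,T\}$ yields~(iii), and the identical argument for $D_xu^\eps$ yields~(iv). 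The initial-condition claim then follows from uniqueness of two-scale limits: when $u^\eps(0,\omega,x)=u_0(x)$, both $u_0$ and $D_xu_0$ are their own two-scale limits (they are deterministic and constant in $\eps$), so (iii)--(iv) force $u^\star(0)=u_0$ in $\HH^1(\Dom)$ and $\theta(0)=0$ in $\LL^2(\Omega\times\Dom)^d$.
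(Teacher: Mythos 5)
Your approach follows the paper's skeleton closely: extract a common two--scale convergent subsequence for $u^\eps$, $\dot u^\eps$, $D_x u^\eps$, $D_x\dot u^\eps$; commute time derivatives with two--scale limits via integration by parts in $t$; show $u^\star$ is deterministic by testing $D_x u^\eps$ against $\eps$--weighted oscillating fields; and identify the structure of $\theta$ by testing against stochastically divergence--free fields. Your Arzel\`a--Ascoli argument for the endpoint traces is a genuine and valid alternative to the paper's mollification--in--time argument with $\varphi_\delta(t)=\varphi(t/\delta)/\delta$; both rest on the $\HH^1(0,T)$ bound on $t\mapsto\int_\Omega\int_\Dom u^\eps\,\Phi^\eps$, and each route works.

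However, you leave two genuine gaps. The crucial one, which you yourself flag as the main obstacle, is the density of smooth stochastically divergence--free test fields in $\L2Pot(\Omega)^\perp$. Lemma~2.1 of~\cite{BMW94}, which you cite, establishes density of $\mathscr{D}^\infty(\Omega)$ in $\LL^2(\Omega)$, but does not by itself produce a dense \emph{divergence--free} family, so the citation does not close the step. What is actually needed, and what the paper invokes, is the construction in the proof of Lemma~2.3(b) of~\cite{BMW94}, which yields a set $S_1\subset\mathscr{D}^\infty(\Omega)^d$ dense in $\L2Pot(\Omega)^\perp$ together with the pointwise identity $\eps\,\div_x\big(\zeta(T(x/\eps)\omega)\big)=(\div_\omega\zeta)(T(x/\eps)\omega)=0$ used when passing to the limit. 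Without that specific density result, the conclusion $\theta(t,\cdot,x)\in\L2Pot(\Omega)$ is unsupported. The second gap is the parenthetical claim that the Dirichlet trace of $u^\star$ is ``inherited'' from $u^\eps$: testing $D_x u^\eps$ against deterministic $\phi\in\CC^1_c(\Dom)$ identifies $\Ex[\xi]$ with the distributional gradient of $u^\star$, but this determines $u^\star$ only up to a time--dependent additive constant, and two--scale convergence does not preserve boundary traces automatically. The paper closes this by first testing against $g\in\CC^1_0(\Dom\cup\GDir)^d\cap\L2Pot(\Dom)^\perp$ to show $\Ex[\xi]\in\L2Pot(\Dom)$ (hence the gradient of an $\HBC$ function), and then by showing that $\Ex[u^\eps]$ is uniformly bounded in $\HH^1([0,T];\HBC)$ and converges weakly there to $u^\star$, so that $u^\star\in\HH^1([0,T];\HBC)$ follows from weak closedness.
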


\blue{Before proceeding to the proof of this lemma, we recall that, as mentioned above, Definition~3.3 of~\cite{BMW94} is identical to Definition~\ref{def:two-scale} given above when applied to $t$--independent functions.}

\medskip

\begin{proof}
The proof is organised into 5 steps. The proof of the assertions~(\ref{i}) and~(\ref{ii}) is an adaptation of the proof of Theorem~3.7(b) of~\cite{BMW94}.

\medskip

\noindent
\emph{Step 1. Two--scale convergence of $u^\eps$ and $\dot{u}^\eps$.}
\blue{We first note that applying the Lemma proved in~III.11.17 of~\cite{DunfordSchwartzI} allows us to represent $u^\eps$ and all of its weak derivatives in space and time in $\LL^2([0,T]\times\Omega\times\Dom)^n$ with $n=1$ or $n=d$. In an abuse of notation, we will refer to both representations in the same way.} Lemma~\ref{th:2scale_L2} then entails that we may extract a subsequence such that $u^\eps$ and $\dot{u}^\eps$
two--scale converge to some $u^\star$ and $v^\star$ respectively, which both belong to $\LL^2([0,T]\times\Omega\times\Dom)$. For any $f\in\CC_0^\infty([0,T])$, $g\in\CC^1_0(\Dom)$ and $h\in\mathscr{D}^\infty(\Omega)$, by integrating by parts, we have
\begin{equation*}
  \int_0^T\!\!\blue{\int_\Omega\int_\Dom} u^\eps(t,\omega,x) \, \dot{f}(t) \, g(x) \, h\left(T\left(\frac x\eps\right)\omega\right) \dx \dPr(\omega) \dt
  = -\int_0^T\!\!\blue{\int_\Omega\int_\Dom} \dot{u}^\eps(t,\omega,x) \, f(t) \, g(x) \, h\left(T\left(\frac x\eps\right)\omega\right) \dx \dPr(\omega) \dt.
\end{equation*}
Passing to the two--scale limit, we obtain
\begin{equation*}
  \int_0^T\!\!\blue{\int_\Omega\int_\Dom} u^\star(t,\omega,x) \, \dot{f}(t) \, g(x) \, h(\omega) \dx \, \dd\Pr(\omega) \dt = -\int_0^T\!\!\blue{\int_\Omega\int_\Dom} v^\star(t,\omega,x) \, f(t) \, g(x) \, h(\omega) \dx \dPr(\omega)\dt.
\end{equation*}
Since the tensor product $\mathscr{D}^\infty(\Omega)\otimes\CC^1_0(\Dom)$ is dense in $\LL^2(\Omega\times\Dom)$ (because the component spaces endowed with appropriate $\LL^2$ norms are respectively dense in $\LL^2(\Omega)$ and $\LL^2(\Dom)$), we find that
\begin{equation*}
  \blue{\int_\Omega\int_\Dom} \int_0^T \!\!\dot{f}(t) u^\star(t,\omega,x) \dt\; k(\omega,x) \dx \, \dd\Pr(\omega) = - \blue{\int_\Omega\int_\Dom}\int_0^T\!\! f(t) v^\star(t,\omega,x) \dt \; k(\omega,x) \dx \dPr(\omega)
\end{equation*}
for any $f\in\CC^\infty_0([0,T])$ and any $k\in\LL^2(\Omega\times\Dom)$. It follows that \blue{$v^\star(\cdot,\omega,x)=\dot{u}^\star(\cdot,\omega,x)$ for almost every $(\omega,x)\in\Omega\times\Dom$}, and identifying the dual of $\LL^2(\Omega\times\Dom)$ with the space itself, we deduce $u^\star\in\HH^1([0,T];\LL^2(\Omega\times\Dom))$.

\medskip

\noindent
\emph{Step 2. $u^\star$ is deterministic.} 
Recalling Lemma~2.1 of~\cite{BMW94}, there exists $S_0\subseteq \mathscr{D}^\infty(\Omega)$ which is dense in $\LL^2(\Omega)$ such that
\begin{equation*}
  (D_\omega h)(T(x)\omega) = D_x \Big(h(T(x)\omega)\Big) \qquad \text{for any $h\in S_0$}.
\end{equation*}
Taking $f$ and $g$ as above and $h\in S_0$, by ``integrating by parts'', we find
\begin{align*}
  & \eps\int_0^T\!\!\blue{\int_\Omega\int_\Dom}D_xu^\eps(t,\omega,x) \, f(t) \, g(x) \, h\left(T\left(\frac x\eps\right)\omega\right) \dx \dPr(\omega)\dt
  \\
  &= -\eps\int_0^T\!\!\blue{\int_\Omega\int_\Dom} u^\eps(t,\omega,x) \, f(t) \, D_xg(x) \, h\left(T\left(\frac x\eps\right)\omega\right) \dx \dPr(\omega)\dt
  \\
  & \qquad - \int_0^T\!\!\blue{\int_\Omega\int_\Dom} u^\eps(t,\omega,x) \, f(t) \, g(x) \, (D_\omega h)\left(T\left(\frac x\eps\right) \omega\right) \dx\dPr(\omega)\dt.
\end{align*}
Passing to the two--scale limit, and noting that the first and second integrals remain bounded by assumption, we see that
\begin{equation*}
  0 = \int_0^T\!\!\blue{\int_\Omega\int_\Dom} u^\star(t,\omega,x) \, f(t) \, g(x) \, D_\omega h(\omega)\dx\dPr(\omega)\dt.
\end{equation*}
It follows that, for any $h\in S_0$,
\begin{equation*}
  0 = \int_\Omega u^\star(t,\omega,x) \, D_\omega h(\omega) \dPr(\omega) \qquad \text{for almost every $(t,x)\in[0,T]\times\Dom$}.
\end{equation*}
Since $S_0$ is dense in $\mathscr{D}^\infty(\Omega)$, we obtain that $D_\omega u^\star(t,\omega,x)=0$ for almost every $(t,\omega,x)\in[0,T]\times\Omega\times\Dom$. Since $T$ is ergodic, the discussion in Section~2 of~\cite{BMW94} demonstrates that any $u\in\LL^2(\Omega)$ such that $D_\omega u=0$ is constant. We may thus choose a representative such that $u^\star=u^\star(t,x)$, and we have demonstrated that $u^\star\in\HH^1([0,T];\LL^2(\Dom))$.

\medskip

\noindent
\emph{Step 3. Two--scale convergence of $D_xu^\eps$ and $D_x\dot{u}^\eps$.} 
Since $D_xu^\eps$ is bounded in $\LL^2(\blue{[0,T]\times\Omega\times\Dom})^d$, there exists a further subsequence along which $D_xu^\eps$ two--scale converges to some $\xi\in \LL^2([0,T]\times\Omega\times\Dom)^d$. Since $D_x\dot{u}^\eps$ is also bounded in $\LL^2(\blue{[0,T]\times\Omega\times\Dom})^d$, we can require that it also two--scale converges. Using the same arguments as in Step 1, we can show that $D_x\dot{u}^\eps$ two--scale converges to $\dot{\xi}$ and therefore that $\xi$ can be identified with a function in $\HH^1([0,T];\LL^2(\Omega\times\Dom)^d)$.

\smallskip

Taking $f\in\CC^\infty_0([0,T])$ and $g\in\CC^1_0(\Dom\cup\GDir)^d\cap\L2Pot(\Dom)^\perp$, i.e. $g$ such that $-\div_xg=0$ in $\HBCPrime$, we obtain, applying the definition of two--scale convergence, that
\begin{align*}
  0&=-\lim_{\eps\to0} \int_0^T\!\!\int_\Omega\int_\Dom u^\eps(t,\omega,x) \, f(t) \, \div_x g(x) \dx\dPr(\omega)\dt
  \\
  &=\lim_{\eps\to0} \int_0^T\!\!\int_\Omega\int_\Dom D_xu^\eps(t,\omega,x) \cdot g(x) \, f(t) \dx\dPr(\omega)\dt
  \\
  &=\int_0^T\!\!\int_\Omega\int_\Dom \xi(t,\omega,x) \cdot g(x) \, f(t) \dx\dPr(\omega)\dt.
\end{align*}
By the density of such $g$ in $\L2Pot(\Dom)^\perp$, we have that $\dps \int_\Omega \xi(t,\omega,\cdot)\dPr(\omega)\in\L2Pot(\Dom)$ for almost every $t\in[0,T]$, and therefore there exists $w\in\LL^2([0,T];\HBC)$ such that
\begin{equation*}
  D_xw(t,x) = \int_\Omega \xi(t,\omega,x)\dPr(\omega)\qquad\text{for almost every $(t,x)\in[0,T]\times\Dom$}.
\end{equation*}
Since $\xi \in \HH^1([0,T];\LL^2(\Omega\times\Dom))^d$, we further obtain that $D_xw\in\HH^1([0,T];\LL^2(\Dom)^d)$, and hence $w\in\HH^1([0,T];\HBC)$.

\smallskip

Taking now a more general space test function, we have, for any $\varphi\in\CC^\infty_0(\Dom)$, that
\begin{align*}
  \int_0^T\!\!\int_\Dom D_xw(t,x) \, f(t) \, \varphi(x)\dx\dt
  &=
  \lim_{\eps\to0}\int_0^T\!\!\int_\Omega\int_\Dom D_xu^\eps(t,\omega,x) \, f(t) \, \varphi(x)\dx\dPr(\omega)\dt
  \\
  &=-\lim_{\eps\to0}\int_0^T\!\!\int_\Omega\int_\Dom u^\eps(t,\omega,x) \, f(t) \, D_x\varphi(x)\dx\dPr(\omega)\dt
  \\
  &=-\int_0^T\!\!\int_\Dom u^\star(t,x) \, f(t) \, D_x\varphi(x)\dx\dt.
\end{align*}
This shows that $D_x w= D_x u^\star$, and hence that we may take $u^\star\in\HH^1([0,T];\HH^1(\Dom))$.

\medskip

We now show that $u^\star(t,\cdot) \in \HBC$ for almost any $t$. Let $f\in\CC^\infty_0([0,T])$ and $g\in\CC^1_0(\Dom\cup\GNeu)$. The function $(t,x,\omega) \mapsto f(t) \, g(x)$ is admissible. Since $u^\eps$ two--scale converges to $u^\star$ which is deterministic, we have
$$
\int_0^T\!\!\int_\Dom \Ex\left[u^\eps(t,\cdot,x)\right] \, f(t) \, g(x) \dx \dt
=
\int_0^T\!\!\int_\Dom\int_\Omega u^\eps(t,\omega,x) \, f(t) \, g(x) \dx \dPr(\omega) \dt
\underset{\eps \to 0}{\to}
\int_0^T\!\!\int_\Dom u^\star(t,x) \, f(t) \, g(x) \dx \dt.
$$
By density of such functions $f(t) \, g(x)$ in $\LL^2([0,T] \times\Dom)$, we hence get that $\Ex[u^\eps]$ weakly converges in $\LL^2([0,T] \times\Dom)$ to $u^\star$. In addition, since $\Ex[u^\eps]$ is bounded in $\HH^1([0,T];\HBC)$, we know that, up to a subsequence extraction, it converges (weakly in $\HH^1([0,T];\HBC)$ and strongly in $\LL^2([0,T] \times\Dom)$) to some $u_1 \in \HH^1([0,T];\HBC)$. By uniqueness of the limit in $\LL^2([0,T] \times\Dom)$, we get that $u_1 = u^\star$, and hence $u^\star \in \HH^1([0,T];\HBC)$.

\medskip

We know that $D_xu^\eps$ two--scale converges to $\xi$, that we write in the form $\xi = D_xu^\star+\theta$ for some $\theta\in\HH^1([0,T]; \LL^2(\Omega\times\Dom)\red{^d})$. We claim that $\theta(t,\cdot,x)\in\L2Pot(\Omega)$ for almost every $(t,x)\in[0,T]\times\Dom$. \blue{Following the construction made in the proof of Lemma~2.3(b) in~\cite{BMW94}, there exists a set $S_1\subset \mathscr{D}^\infty(\Omega)^d$ which is dense in the closure of the kernel of $\div_\omega$ (or equivalently, in $\L2Pot(\Omega)^\perp$) such that
  \begin{equation*}
    \eps\,\div_x \big( \zeta\left(T(x/\eps)\omega\right) \big) = \big(\div_\omega\zeta\big)\left(T(x/\eps)\omega\right)=0.
  \end{equation*}
}
Let $f\in\CC^\infty_0([0,T])$, $g\in\CC^1_0(\Dom)$ and $\zeta\in S_1$. We have that
\begin{multline*}
  \int_0^T\!\!\int_\Omega\int_\Dom f(t) \, g(x) \, \left[D_xu^\eps(t,\omega,x)-D_xu^\star(t,x)\right] \cdot \zeta\left(T\left(\frac x\eps\right)\omega\right)\dx\dPr(\omega)\dt\\
    =-\int_0^T\!\!\int_\Omega\int_\Dom\left[u^\eps(t,\omega,x)-u^\star(t,x)\right] f(t) \, D_xg(x) \cdot \zeta\left(T\left(\frac x\eps\right)\omega\right)\dx\dPr(\omega)\dt.
\end{multline*}
Passing to the two--scale limit, we get
\begin{equation*}
  \int_0^T\!\!\int_\Omega\int_\Dom f(t) \, g(x) \, \theta(t,\omega,x) \cdot \zeta(\omega)\dx\dPr\dt = 0.
\end{equation*}
\blue{Now, by the previously asserted density of $S_1$ in $\L2Pot(\Omega)^\perp$,} we deduce that $\theta(t,\cdot,x)\in\L2Pot(\Omega)$ for almost every $(t,x)\in[0,T]\times\Dom$. \blue{Applying the result of~III.11.17 in~\cite{DunfordSchwartzI}, we may require that $\theta\in\HH^1\b([0,T];\LL^2(\Dom;\L2Pot(\Omega))\b)$, which} concludes the proof of assertions~(\ref{i}) and~(\ref{ii}) of the lemma.

\medskip

\noindent
\emph{Step 4. Two--scale convergence of $u^\eps(0)$, $D_xu^\eps(0)$, $u^\eps(T)$ and $D_xu^\eps(T)$.} 
Since $\HH^1([0,T];\LL^2(\Omega;\HH^1(\Dom)))$ is embedded in $\CC([0,T];\LL^2(\Omega;\HH^1(\Dom)))$, we may select continuous representatives of $u^\eps$ which have well--defined values at $t=0$ and $t=T$. In addition, $u^\eps(0)$ and $u^\eps(T)$ are bounded in $\LL^2(\Omega;\HH^1(\Dom))$. Using Theorem~3.7(b) of~\cite{BMW94}, we deduce that (up to extracting a further subsequence) $u^\eps(0)$ two--scale converges to some $U_0 \in\HH^1(\Dom)$ and $D_xu^\eps(0)$ two--scale converges to $D_xU_0+\Theta_0$ for some $\Theta_0 \in \LL^2(\Dom;\L2Pot(\Omega))$, in the sense that, for any $\psi \in \LL^2(\Omega\times\Dom)$ and $\Psi \in\LL^2(\Omega\times\Dom)^d$ which are admissible in the sense of~\cite{BMW94},
\begin{align*}
  \lim_{\eps\to 0}\int_\Omega\int_\Dom u^\eps(0,\omega,x) \, \psi\left(T\left(\frac{x}{\eps}\right)\omega,x\right) \dx \dPr(\omega)
  &=
  \int_\Omega\int_\Dom U_0(x) \, \psi(\omega,x) \dx \dPr(\omega),
  \\
  \lim_{\eps\to 0}\int_\Omega\int_\Dom D_xu^\eps(0,\omega,x) \cdot \Psi \left(T\left(\frac{x}{\eps}\right)\omega,x\right) \dx \dPr(\omega)
  &=
  \int_\Omega\int_\Dom \left[D_xU_0(x)+\Theta_0(\omega,x)\right] \cdot \Psi(\omega,x) \dx \dPr(\omega).
\end{align*} 
Likewise, $u^\eps(T)$ two--scale converges to some $U_T \in \HH^1(\Dom)$ and $D_xu^\eps(T)$ two--scale converges to $D_xU_T+\Theta_T$ for some $\Theta_T \in \LL^2(\Dom;\L2Pot(\Omega))$.

\smallskip

Next, we wish to check that the operations of taking the two--scale limit and selecting the initial or final values commute, in order to identify $U_0$, $\Theta_0$, $U_T$ and $\Theta_T$ with respect to $u^\star$ and $\theta$. To do so, consider $\varphi_\delta(t) = \varphi(t/\delta)/\delta$ for some $\varphi\in\CC^\infty_0(\R)$ which is even, non-negative and satisfies
\begin{equation*}
  \int_0^\infty \varphi(t)\dt = 1.
\end{equation*}
Let $X$ be a Banach space. For any $u\in\HH^1([0,T];X)$ and any $0 \leq t \leq T$, we have
$$
\left\| u(t)-u(0) \right\|_X
\leq
\int_0^t \|\dot{u}(s)\|_X \ds
\leq
\sqrt{t} \, \|\dot{u}\|_{\LL^2([0,T];X)}.
$$
We hence deduce that\red{, when $\delta$ is sufficiently small,}
\begin{align}
  \left\|\int_0^T\varphi_\delta(t) \, u(t) \dt - u(0)\right\|_X
  &=
  \left\|\int_0^T\varphi_\delta(t) \, [u(t)-u(0)] \dt \right\|_X
  \nonumber
  \\
  &\leq \int_0^T \sqrt{t} \, \|\dot{u}\|_{\LL^2([0,T];X)} \, \varphi_\delta(t)\dt
  \nonumber
  \\
  & \leq \sqrt{\delta} \, \|\dot{u}\|_{\LL^2([0,T];X)} \int_0^\infty \sqrt{t} \, \varphi(t)\dt
  \nonumber
  \\
  & =C\sqrt{\delta} \, \|\dot{u}\|_{\LL^2([0,T];X)}.
  \label{eq:PointEvaluation}
\end{align}
Taking $g\in\LL^2(\Omega\times\Dom)$ which is admissible as above, and applying the definition of two--scale convergence in $\LL^2([0,T]\times\Omega\times\Dom)$, we find that
\begin{align*}
  & \bigg|\int_0^T\!\!\int_\Omega\int_\Dom u^\star(t,x) \, \varphi_\delta(t) \, g(\omega,x) \dx \dPr(\omega) \dt - \int_\Omega\int_\Dom U_0(x) \, g(\omega,x) \dx \dPr(\omega)\bigg|
  \\
  &= \lim_{\eps\to 0} \bigg|\int_0^T\!\!\int_\Omega\int_\Dom u^\eps(t,\omega,x) \, \varphi_\delta(t) \, g\left(T\left(\frac{x}{\eps}\right)\omega,x\right) \dx \dPr(\omega) \dt
  -\int_\Omega\int_\Dom u^\eps(0,\omega,x) \, g\left(T\left(\frac{x}{\eps}\right)\omega\right) \dx \dPr(\omega)\bigg|
  \\
  & \leq C\sqrt{\delta},
\end{align*}
where the final estimate follows by using the estimate~\eqref{eq:PointEvaluation} in combination with the fact that the sequence $u^\eps$ is uniformly bounded in $\HH^1([0,T];\LL^2(\Omega;\HH^1(\Dom)))$. Letting $\delta \to 0$, and recalling that $u^\star$ belongs to $\HH^1([0,T];\HH^1(\Dom))$ and thus has a representative which is continuous in time, we obtain that $u^\star(0,x)=U_0(x)$ for almost every $x\in\Dom$.

A similar argument applies to show that $D_xu^\star(0,x)+\theta(0,\omega,x) = D_xU_0(x)+\Theta_0(\omega,x)$ a.e. in $\Omega \times \Dom$, and likewise at the final time $T$. This concludes the proof of assertions~(\ref{iii}) and~(\ref{iv}) of the lemma.

\medskip

\noindent
\emph{Step 5. Deterministic two--scale limit at initial time.}
We now assume that $u^\eps(0,\omega,\cdot) = u_0$ in $\HBC$ for almost every $\omega\in\Omega$. In view of assertion~(\ref{iii}), we see that $u^\star(0,\cdot)=u_0$. Taking now $g\in\CC^\infty_0(\Dom)$ and $h\in\mathscr{D}^\infty(\Omega)$, we have, in view of assertion~(\ref{iv}) and of the fact that $D_xu^\star(0,\cdot) = D_x u_0$, that
\begin{equation*}
  0
  =
  \int_\Omega\int_\Dom \left[D_xu^\eps(0,\omega,x)-D_xu_0(x)\right] g(x) \, h\left(T\left(\frac{x}{\eps}\right)\omega\right)\dx\dPr(\omega)
  \underset{\eps \to 0}{\to}
  \int_\Omega\int_\Dom \theta(0,x,\omega) \, g(x) \, h(\omega)\dx\dPr(\omega).
\end{equation*}
By density, we find that $\theta(0,\omega,x)=0$ for almost every $(\omega,x)\in\Omega\times\Dom$. This completes the proof of Lemma~\ref{th:2scale_H1}.
\end{proof}

We note that a key consequence of the previous lemma is that gradient fields $D_xu^\eps$ which are uniformly bounded in $\LL^2(\Omega \times \Dom)^d$ have two--scale convergent subsequences with limits in $\L2Pot(\Dom)+\LL^2(\Dom;\L2Pot(\Omega))$. In the following result, we show that divergence--free fields satisfy a similar property.


\begin{lemma}
\label{th:2scale_divfree_L2}
Suppose that \blue{$\sigma^\eps \in \LL^2\left([0,T]\times\Omega;\L2Pot(\Dom)^\perp\right)$} is a uniformly bounded sequence. Then there exist two functions $\sigma^\star \in \LL^2\left([0,T];\L2Pot(\Dom)^\perp\right)$ and \blue{$\zeta \in \LL^2\left([0,T]\times\Dom;\L2Pot(\Omega)^\perp\right)$} and a subsequence along which $\sigma^\eps$ two--scale converges to $\sigma^\star+\zeta$. Furthermore, the expectation of $\zeta$ vanishes: $\dps \int_\Omega \zeta(t,\omega,x) \dPr(\omega) = 0$ for almost every $(t,x)\in[0,T]\times\Dom$.
\end{lemma}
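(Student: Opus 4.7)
The plan is to first use Lemma~\ref{th:2scale_L2} applied componentwise to extract a subsequence of $\sigma^\eps$ (still denoted $\sigma^\eps$) which two--scale converges to some $\tau^\star\in\LL^2([0,T]\times\Omega\times\Dom)^d$. We then define
\begin{equation*}
  \sigma^\star(t,x):=\int_\Omega \tau^\star(t,\omega,x)\dPr(\omega)
  \qquad\text{and}\qquad
  \zeta(t,\omega,x):=\tau^\star(t,\omega,x)-\sigma^\star(t,x),
\end{equation*}
so that $\tau^\star=\sigma^\star+\zeta$ and $\int_\Omega \zeta\dPr=0$ by construction. It remains to verify the two membership statements $\sigma^\star(t,\cdot)\in\L2Pot(\Dom)^\perp$ and $\zeta(t,\cdot,x)\in\L2Pot(\Omega)^\perp$ for almost every $(t,x)$, along with the appropriate Bochner integrability, which will follow from the scalar integrability already known for $\tau^\star$.

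To prove that $\sigma^\star(t,\cdot)\in\L2Pot(\Dom)^\perp$, I would test the two--scale convergence against deterministic test functions of the form $\psi(t,\omega,x)=f(t)\,D_x\varphi(x)$ with $f\in\CC^\infty_0([0,T])$ and $\varphi\in\CC^1_0(\Dom\cup\GNeu)$, which are admissible in the sense of Definition~\ref{def:admissible}. Since $\sigma^\eps(t,\omega,\cdot)\in\L2Pot(\Dom)^\perp$ for almost every $(t,\omega)$, the corresponding prelimit integral vanishes identically, so that passing to the two--scale limit gives
\begin{equation*}
  0=\int_0^T\!\!\int_\Omega\int_\Dom \tau^\star(t,\omega,x)\cdot f(t)\,D_x\varphi(x)\dx\dPr(\omega)\dt
  =\int_0^T\!\! f(t)\int_\Dom \sigma^\star(t,x)\cdot D_x\varphi(x)\dx\dt.
\end{equation*}
Letting $f$ and $\varphi$ vary, using Fubini and density of $\CC^1_0(\Dom\cup\GNeu)$ in $\HBC$, this gives $\sigma^\star(t,\cdot)\in\L2Pot(\Dom)^\perp$ for almost every $t$.

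The more delicate step is to prove that $\zeta(t,\cdot,x)\in\L2Pot(\Omega)^\perp$ for almost every $(t,x)$, and this is the step I expect to be the main obstacle, since it is where the divergence--free constraint on $\sigma^\eps$ must be coupled to microscopic oscillations. Following the technique used in Step~3 of the proof of Lemma~\ref{th:2scale_H1}, I would pick $h$ in the dense subset $S_0\subset\mathscr{D}^\infty(\Omega)$ provided by Lemma~2.1 of~\cite{BMW94}, which satisfies $(D_\omega h)(T(x)\omega)=D_x[h(T(x)\omega)]$, and use the oscillating test function $\varphi_\eps(x):=\eps\, g(x)\,h(T(x/\eps)\omega)\in\HBC$ with $g\in\CC^1_0(\Dom)$. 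Expanding $D_x\varphi_\eps$ and using the divergence--free property of $\sigma^\eps$ gives
\begin{equation*}
  0=\eps\int_0^T\!\!\int_\Omega\int_\Dom f(t)\,\sigma^\eps\cdot D_xg(x)\,h\!\left(T(x/\eps)\omega\right)\dx\dPr(\omega)\dt+\int_0^T\!\!\int_\Omega\int_\Dom f(t)\,\sigma^\eps\cdot g(x)\,(D_\omega h)\!\left(T(x/\eps)\omega\right)\dx\dPr(\omega)\dt.
\end{equation*}
The first integral vanishes as $\eps\to0$ because $\sigma^\eps$ is uniformly bounded, while the second one converges to
\begin{equation*}
  \int_0^T\!\!\int_\Omega\int_\Dom f(t)\,\tau^\star(t,\omega,x)\cdot g(x)\,D_\omega h(\omega)\dx\dPr(\omega)\dt
\end{equation*}
by the two--scale limit. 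Varying $f$, $g$ and $h$ over dense subsets, we deduce that $\tau^\star(t,\cdot,x)\in\L2Pot(\Omega)^\perp$ for almost every $(t,x)$. Since $\sigma^\star$ is deterministic, Remark~\ref{rem:evident} applied to any $u\in\HH^1(\Omega)$ ensures $\sigma^\star\cdot\int_\Omega D_\omega u\dPr=0$, so $\sigma^\star\in\L2Pot(\Omega)^\perp$ trivially, and therefore $\zeta=\tau^\star-\sigma^\star\in\L2Pot(\Omega)^\perp$ for almost every $(t,x)$, concluding the proof.
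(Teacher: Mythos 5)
Your proof is correct and follows essentially the same route as the paper's: extract a two--scale limit via Lemma~\ref{th:2scale_L2}, recover $\sigma^\star$ by testing with deterministic divergence--free test functions, and recover the $\L2Pot(\Omega)^\perp$ membership of the fluctuating part via the oscillating test function $\eps\,g(x)\,h(T(x/\eps)\omega)$ built from $h\in S_0$. The only cosmetic difference is that the paper subtracts $\sigma^\star$ before applying the oscillating test, showing directly that $\zeta=\Sigma-\sigma^\star\in\L2Pot(\Omega)^\perp$, whereas you test $\sigma^\eps$ itself, conclude that the full limit $\tau^\star$ lies in $\L2Pot(\Omega)^\perp$, and then subtract the (trivially orthogonal, because $\omega$--constant) $\sigma^\star$; these are interchangeable.
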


\begin{proof}
\blue{As in the proof of Lemma~\ref{th:2scale_H1}, we first note that applying the result of~III.11.17 in~\cite{DunfordSchwartzI} allows us to represent $\sigma^\eps$ in $\LL^2([0,T]\times\Omega\times\Dom)$, for which we use the same notation.} Applying Lemma~\ref{th:2scale_L2}, we may extract a subsequence such that $\sigma^\eps$ two--scale converges to some $\Sigma\in\LL^2([0,T]\times\Omega\times\Dom)^d$. Taking $f\in\CC^\infty([0,T])$ and $g\in\CC^1_0(\Dom\cup\GNeu)$, we have that
\begin{equation*}
  0 = \lim_{\eps\to0}\int_0^T\!\!\int_\Dom\int_\Omega f(t) \, \sigma^\eps(t,\omega,x) \cdot D_xg(x) \dPr(\omega)\dx\dt = \int_0^T\!\!\int_\Dom\int_\Omega f(t) \, \Sigma(t,\omega,x) \cdot D_xg(x)\dx\dPr(\omega)\dt.
\end{equation*}
Defining $\dps \sigma^\star(t,x) = \int_\Omega \Sigma(t,\omega,x)\dPr(\omega)$, it follows that $\sigma^\star(t,\cdot)\in\L2Pot(\Dom)^\perp$ for almost every $t\in[0,T]$. Using the integrability in time of $\Sigma$, we have $\sigma^\star \in \LL^2\left([0,T];\L2Pot(\Dom)^\perp\right)$.

As in the proof of Lemma~\ref{th:2scale_H1}, we recall from Section~2 of~\cite{BMW94} that there exists $S_0\subseteq\mathscr{D}^\infty(\Omega)$ which is dense in $\LL^2(\Omega)$ such that
\begin{equation*}
  D_x \Big( h(T(x)\omega) \Big) = (D_\omega h)(T(x)\omega)\qquad\text{for any $h\in S_0$}.
\end{equation*}
Taking $f$ as before, $g\in\CC^1_0(\Dom)$ and $h \in S_0$, we get
\begin{multline*}
  -\eps \int_0^T\!\!\int_\Dom \int_\Omega f(t) \, h\left(T\left(\frac{x}{\eps}\right)\omega\right) \, \b[\sigma^\eps(t,\omega,x)-\sigma^\star(t,x)\b] \cdot D_xg(x) \dx\dPr(\omega)\dt\\
  =\int_0^T\!\!\int_\Dom\int_\Omega f(t) \, g(x) \, \b[\sigma^\eps(t,\omega,x)-\sigma^\star(t,x)\b] \cdot (D_\omega h)\left(T\left(\frac{x}{\eps}\right)\omega\right)\dx\dPr(\omega)\dt.
\end{multline*}
Defining $\zeta = \Sigma-\sigma^\star$, and passing to the two--scale limit in the above expression, we obtain
\begin{equation*}
  0=\int_0^T\!\!\int_\Omega\int_\Dom f(t) \, g(x) \, \zeta(t,\omega,x) \cdot D_\omega h(\omega)\dx\dPr(\omega)\dt.
\end{equation*}
Since $S_0$ is dense in $\LL^2(\Omega)$, this implies that $\zeta(t,\cdot,x) \in \L2Pot(\Omega)^\perp$ for almost every $(t,x)\in[0,T]\times\Dom$. We have thus shown that \blue{$\zeta \in \LL^2\left([0,T]\times\Dom;\L2Pot(\Omega)^\perp\right)$.} Furthermore, in view of the definition of $\sigma^\star$, we observe that the expectation of $\zeta$ vanishes. This concludes the proof of Lemma~\ref{th:2scale_divfree_L2}.
\end{proof}

\subsection{Characterising $(\Psi^\eta_{t,\omega})^*$}
\label{sec:dual_representation}

\blue{In this section, we provide a preliminary result (namely Lemma~\ref{th:dual_char} below) which characterises various properties of a uniformly convex functional (precisely defined by~\eqref{eq:def_precise} below) defined on $\HBC$ and which only depends on $D_xu$. In order to do so, we make use of an approximation result, Lemma~\ref{th:convex_approx} below, which provides a smooth, measurable approximation of the dissipation potential density $\psi$. This approximation result is also needed for the proof of our main result. Since both lemmas rely crucially on the various structural assumptions made on $\psi$, we recall here these assumptions (made in Section~\ref{sec:dissipation_assumptions}) for the reader's convenience.

\medskip

\begin{center}
  \framed{0.9\textwidth}{
  {\bf Assumptions on $\psi$.}
  \begin{enumerate}
  \item[$(\psi1)$] For any $\xi\in\R^d$, the function $(\omega,x)\in\Omega\times\R^d\mapsto\psi(\omega,x,\xi)$ is measurable with respect to the sigma--algebra generated by $\Sigma\times\Leb^d$, and is stationary in the sense of~\eqref{eq:Stationarity}, i.e. there exists $\psi_0:\Omega\times\R^d\to\R$ such that
    \begin{equation*}
      \text{for any $\xi\in\R^d$}, \quad \psi(\omega,x,\xi) = \psi_0\big(T(x)\omega,\xi\big) \quad\text{for any $x\in\R^d$ and almost every $\omega\in\Omega$}.
    \end{equation*}
  \item[$(\psi2)$] $\psi(\omega,x,\xi)\geq 0$ for any $(\omega,x,\xi) \in \Omega\times\R^d\times\R^d$, and $\psi(\omega,x,0)=0$ for any $(\omega,x)\in \Omega\times\R^d$.
  \item[$(\psi3)$] $\psi$ is uniformly strongly convex in its final variable, i.e. there exists $c>0$ such that
      \begin{equation*}
        \xi \in \R^d \mapsto \psi(\omega,x,\xi)-c|\xi|^2\quad\text{is convex for almost every $(\omega,x)\in\Omega\times\R^d$}.
      \end{equation*}
    \item[$(\psi4)$] There exists $C>0$ such that, for any $\xi\in\R^d$,
    \begin{equation*}
      \psi(\omega,x,\xi)\leq C(1+|\xi|^2)\quad\text{for almost every $(\omega,x)\in\Omega\times\R^d$}.
    \end{equation*}
  \end{enumerate}}
\end{center}

\medskip

\noindent
Under these assumptions, we prove the following result.

\begin{lemma}\label{th:convex_approx}
For any $\eta>0$, there exists a function $\psi_\eta:\Omega\times\R^d\times\R^d\to\R$, defined to be the Moreau envelope
\begin{equation}\label{eq:psi_eta}
  \psi_\eta(\omega,x,\xi) := \inf_{p\in\R^d} \left\{ \psi(\omega,x,p) + \frac{1}{2\eta} |\xi-p|^2 \right\},
\end{equation}
which satisfies the following properties:
\begin{enumerate}
\item For almost every $(\omega,x)\in\Omega\times\R^d$, the function $\xi\mapsto\psi_\eta(\omega,x,\xi)$ is $\CC^1$ and convex, with $\xi\mapsto D_\xi\psi_\eta(\omega,x,\xi)$ being uniformly Lipschitz on compact sets. \label{ii_sec5}
\item For any $\xi\in\R^d$, the function $(\omega,x)\mapsto\psi_\eta(\omega,x,\xi)$ is measurable. It is stationary in the sense of~\eqref{eq:Stationarity}.\label{i_sec5}
\item For all $\xi\in\R^d$ and for almost every $(\omega,x)\in\Omega\times\R^d$, we have\label{iii_sec5}
\begin{equation} \label{eq:borne_psi_eta}
  0 \leq \psi_\eta(\omega,x,\xi) \leq \psi(\omega,x,\xi) \leq \psi_\eta(\omega,x,\xi)+8\eta\, C^2(1+|\xi|^2)
\end{equation}
where $C$ is the constant appearing in Assumption~$(\psi4)$.
\item There exists a constant $\mathcal{C}$, which only depends on the constants $c$ and $C$ appearing in Assumptions~$(\psi3)$ and~$(\psi4)$, such that, for almost every $(\omega,x,\xi) \in \Omega \times \R^d \times \R^d$,\label{iv_sec5}
\begin{equation} \label{eq:bound_d_psi_eta}
  \left| D_\xi \psi_\eta(\omega,x,\xi) \right| \leq \mathcal{C} \, (1+|\xi|).
\end{equation}
\item Let $(\psi_\eta)^*$ be the Legendre--Fenchel transform of $\psi_\eta$ with respect to its third variable. For $\eta$ sufficiently small (e.g. whenever $\eta \leq c/(16 C^2)$), we have \label{vi_sec5}
\begin{equation} \label{eq:utile}
  \psi(\omega,x,\xi) \leq \frac{\psi_\eta(\omega,x,\xi)}{m_\eta} + \eta \frac{8 C^2}{m_\eta}
  \quad\text{for any $(\omega,x,\xi)\in \Omega\times\R^d\times \R^d$}
\end{equation}
and
\begin{equation} \label{eq:borne_psi_eta_star}
  (\psi_\eta)^*(\omega,x,\zeta)
  \geq
  \psi^*(\omega,x,\zeta)
  \geq
  \frac{(\psi_\eta)^*(\omega,x,m_\eta \, \zeta)}{m_\eta} - \eta \frac{8 C^2}{m_\eta}
  \quad\text{for any $(\omega,x,\zeta)\in \Omega\times\R^d\times \R^d$}
\end{equation}
with
\begin{equation} \label{eq:def_m_eta}
m_\eta := 1 - 8 \eta \, \frac{C^2}{c} \geq \frac{1}{2} > 0.
\end{equation}
Furthermore, for almost any $(\omega,x) \in \Omega\times\R^d$, the function $\zeta \mapsto (\psi_\eta)^*(\omega,x,\zeta)$ is convex.
\item The function $\psi_\eta(\omega,x,\xi)$ is (uniformly in $(\omega,x)$) strongly convex in its final variable, in the sense that the function $\dps \xi \mapsto \psi_\eta(\omega,x,\xi) - \frac{c}{1+2c\eta} |\xi|^2$ is convex. \label{vii_sec5}
\item For almost any $(\omega,x) \in \Omega\times\R^d$, the function $\zeta \mapsto (\psi_\eta)^*(\omega,x,\zeta)$ is $\CC^1$. Furthermore, there exists a constant $\mathcal{C}^*$, which only depends on the constants $c$ and $C$ appearing in Assumptions~$(\psi3)$ and~$(\psi4)$, such that, for almost every $(\omega,x,\zeta) \in \Omega \times \R^d \times \R^d$ and any $\eta \leq 1$, \label{viii_sec5}
\begin{equation} \label{eq:bound_d_psi_eta_star}
  \left| D_\zeta (\psi_\eta)^*(\omega,x,\zeta) \right| \leq \mathcal{C}^* \, (1+|\zeta|).
\end{equation}
\end{enumerate}
\end{lemma}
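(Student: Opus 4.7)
The plan is to view $\psi_\eta$ as a Moreau envelope (i.e., an infimal convolution $\psi \square q_\eta$ with $q_\eta(\xi)=|\xi|^2/(2\eta)$) and exploit the rich classical theory that comes with this construction. The assumptions $(\psi1)$--$(\psi4)$ guarantee that for almost every $(\omega,x)$ the map $p \mapsto \psi(\omega,x,p) + |\xi-p|^2/(2\eta)$ is proper, continuous, and strongly convex (with constant $1/\eta > 0$), so it admits a unique minimiser $p_\eta(\omega,x,\xi)$ and the first-order condition gives the inclusion $(\xi - p_\eta)/\eta \in \partial_\xi\psi(\omega,x,p_\eta)$. From these observations, assertion~(\ref{ii_sec5}) (smoothness, convexity, and Lipschitz gradient of $\psi_\eta$ in $\xi$ via the identity $D_\xi\psi_\eta(\omega,x,\xi)=(\xi-p_\eta)/\eta$ and the non-expansiveness of the prox) and assertion~(\ref{i_sec5}) follow from standard arguments: for measurability, I would replace $\inf_{p\in\R^d}$ by $\inf_{p\in\Q^d}$ (valid by continuity in $p$), making $\psi_\eta(\cdot,\cdot,\xi)$ a countable infimum of measurable functions; stationarity is inherited by writing $\psi(\omega,x,p)=\psi_0(T(x)\omega,p)$ inside the infimum.

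For~(\ref{iii_sec5}), the inequalities $0\leq\psi_\eta\leq\psi$ are immediate (take $p=\xi$ and use $\psi\geq 0$). The only non-trivial step, and arguably the main technical kernel of the lemma, is the upper bound $\psi\leq\psi_\eta+8\eta C^2(1+|\xi|^2)$. I would pick any $\tau\in\partial_\xi\psi(\omega,x,\xi)$ (non-empty since $\psi$ is proper convex finite-valued) and use convexity at $\xi$ to get $\psi(p_\eta)\geq\psi(\xi)+\tau\cdot(p_\eta-\xi)$, then apply Young's inequality $|\tau||\xi-p_\eta|\leq\frac{\eta}{2}|\tau|^2+\frac{1}{2\eta}|\xi-p_\eta|^2$, so that
\[
\psi(\xi)-\psi_\eta(\xi)=\psi(\xi)-\psi(p_\eta)-\tfrac{1}{2\eta}|\xi-p_\eta|^2\leq\tfrac{\eta}{2}|\tau|^2\leq 8\eta C^2(1+|\xi|^2),
\]
where the final bound comes from~\eqref{eq:subdiff_bnd}. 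For~(\ref{iv_sec5}), the same subgradient bound applied at $p_\eta$ gives $|\xi-p_\eta|^2/\eta^2\leq 16 C^2(1+|p_\eta|^2)\leq 16 C^2(1+2|\xi|^2+2|\xi-p_\eta|^2)$, and solving for $|\xi-p_\eta|^2$ yields the desired $\mathcal{C}(1+|\xi|)$ bound on $D_\xi\psi_\eta$, possibly after first restricting $\eta$ (the constant $\mathcal{C}$ may absorb this).

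To establish~(\ref{vii_sec5}), I would decompose $\psi=c|\cdot|^2+g$ with $g$ convex (possible by $(\psi3)$) and complete the square inside the infimum, which identifies
\[
\psi_\eta(\omega,x,\xi)=g_{\eta'}\!\left(\omega,x,\tfrac{\xi}{1+2c\eta}\right)+\tfrac{c}{1+2c\eta}|\xi|^2,\qquad\eta':=\tfrac{\eta}{1+2c\eta},
\]
the first term being convex in $\xi$ by composition with an affine map, which yields strong convexity of $\psi_\eta$ with constant $c/(1+2c\eta)$ as claimed. For~(\ref{vi_sec5}), the key step is~\eqref{eq:utile}: combining~\eqref{eq:borne_psi_eta} with $\psi(\xi)\geq c|\xi|^2$ (from~\eqref{eq:psi_lower_bound}) gives $\psi(\xi)\leq\psi_\eta(\xi)+8\eta C^2+\frac{8\eta C^2}{c}\psi(\xi)$, and rearranging produces~\eqref{eq:utile} exactly when $m_\eta>0$, i.e. when $\eta\leq c/(16C^2)$. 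Taking the Legendre--Fenchel transform of this inequality (using that the transform reverses order and scales as $(\alpha f)^*(\zeta)=\alpha f^*(\zeta/\alpha)$) together with $\psi_\eta\leq\psi\Rightarrow(\psi_\eta)^*\geq\psi^*$ yields both chains of inequalities in~\eqref{eq:borne_psi_eta_star}; convexity of $(\psi_\eta)^*$ in $\zeta$ is automatic from being a Legendre--Fenchel transform. Finally, for~(\ref{viii_sec5}), the strong convexity of $\psi_\eta$ with modulus $c/(1+2c\eta)$ established in~(\ref{vii_sec5}) is dual to $(\psi_\eta)^*$ being $C^{1,1}$ with gradient Lipschitz constant $(1+2c\eta)/c$ (a standard duality between strong convexity and Lipschitz gradient, e.g. Theorem 26.3 of~\cite{Rockafellar}); since $\psi_\eta(0)=0$ and $\psi_\eta\geq 0$, zero minimises $\psi_\eta$ so $D_\zeta(\psi_\eta)^*(0)=0$, and the Lipschitz bound combined with $\eta\leq 1$ yields~\eqref{eq:bound_d_psi_eta_star}. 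The only genuine obstacle is the upper bound in~(\ref{iii_sec5}) above, which requires the Young's-inequality trick together with the subdifferential bound; once it is in hand, all the remaining items follow by routine convex-analytic manipulations.
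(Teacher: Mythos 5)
Your proposal tracks the paper's proof closely: the Moreau envelope machinery for items~(\ref{ii_sec5})--(\ref{i_sec5}), the Young-at-the-minimiser bound for item~(\ref{iii_sec5}), the order-reversal and scaling of the Legendre--Fenchel transform for item~(\ref{vi_sec5}), and the duality between strong convexity and Lipschitz gradient for item~(\ref{viii_sec5}) all follow the paper's route in substance. Your argument for item~(\ref{vii_sec5}) via the explicit decomposition $\psi = c|\cdot|^2 + g$ and completion of the square --- giving $\psi_\eta(\xi) = \frac{c}{1+2c\eta}|\xi|^2 + g_{\eta'}\!\left(\xi/(1+2c\eta)\right)$ with $\eta' = \eta/(1+2c\eta)$ --- is in fact cleaner than the paper's, which proceeds by a direct computation with Young's inequality in the style of Proposition~8.26 of~\cite{BC17}; your decomposition delivers the exact constant with no epsilon-chasing. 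Likewise your observation that $D_\zeta(\psi_\eta)^*(0) = 0$ (because $0$ globally minimises $\psi_\eta$) streamlines item~(\ref{viii_sec5}) relative to the paper, which instead invokes the bound~\eqref{eq:bound_d_psi_eta} to control $D_\xi\psi_\eta(\omega,x,0)$.

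However, your argument for item~(\ref{iv_sec5}) has a genuine gap. You bound $|p_\eta|^2 \leq 2|\xi|^2 + 2|\xi-p_\eta|^2$ and substitute into $|\xi - p_\eta|^2/\eta^2 \leq 16C^2(1+|p_\eta|^2)$, producing a self-referential inequality in $|\xi - p_\eta|^2$: rearranging gives $\left(\eta^{-2} - 32C^2\right)|\xi - p_\eta|^2 \leq 16C^2(1+2|\xi|^2)$, which yields a bound only when $\eta < 1/(4\sqrt{2}\,C)$, and even then the resulting constant depends on $\eta$ and blows up as $\eta$ tends to $1/(4\sqrt{2}\,C)$. The lemma asserts~\eqref{eq:bound_d_psi_eta} for \emph{every} $\eta>0$ with $\mathcal{C}$ depending only on $c$ and $C$, so your parenthetical ``the constant $\mathcal{C}$ may absorb this'' cannot rescue the argument. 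The paper closes the gap by controlling $|p_\eta|$ directly rather than self-referentially, using the quadratic lower bound $\psi \geq c|\cdot|^2$ from~\eqref{eq:psi_lower_bound}: the chain $c|p_\eta|^2 \leq \psi(p_\eta) \leq \psi_\eta(\xi) \leq \psi(\xi) \leq C(1+|\xi|^2)$ gives $|p_\eta| \leq \sqrt{C/c}\,(1+|\xi|^2)^{1/2}$ uniformly in $\eta$, after which $|D_\xi\psi_\eta(\xi)| \leq 4C\sqrt{1+|p_\eta|^2}$ concludes. The fix is small, but it hinges on the ingredient ($\psi \geq c|\cdot|^2$) you did not bring to bear at this step.
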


\begin{proof}
The definition~\eqref{eq:psi_eta} of $\psi_\eta$ follows that of the Moreau envelope defined in Section~12.4 of~\cite{BC17}, and property~\eqref{ii_sec5} follows from Propositions~12.15 and~12.29 in the same reference.

\smallskip

We note that the function $(\omega,x)\mapsto\psi_\eta(\omega,x,\xi)$ is measurable for each $\xi$, since the infimum in~\eqref{eq:psi_eta} can be taken over $\overline{p}\in\Q^d$ without changing the definition, thereby making the function an infimum over a countable collection of measurable functions, and hence itself measurable. The stationarity of $\psi$ assumed in~$(\psi1)$ directly implies that of $\psi_\eta$, since
\begin{equation}\label{eq:StationaryReg}
\begin{gathered}
  \psi_\eta(\omega,x,\xi) = \inf_{\overline{p}\in\R^d} \left\{ \psi_0\big(T(x)\omega,\overline{p}\big) + \frac{1}{2\eta} \, |\xi-\overline{p}|^2 \right\} = \psi_{0,\eta}\big(T(x)\omega,\xi\big),
  \\[1mm]
  \text{where} \qquad \psi_{0,\eta}(\omega,\xi) := \inf_{\overline{p}\in\R^d} \left\{ \psi_0(\omega,\overline{p}) + \frac{1}{2\eta} |\xi-\overline{p}|^2 \right\} \quad \text{for all $(\omega,\xi)\in\Omega\times\R^d$}.
\end{gathered}
\end{equation}
This proves property~\eqref{i_sec5}.

\smallskip

We now turn to proving property~\eqref{iii_sec5}. Since this property is expected to hold uniformly for almost every $(\omega,x)\in\Omega\times\R^d$, for convenience we set $f(\xi):= \psi(\omega,x,\xi)$ and $f_\eta(\xi):=\psi_\eta(\omega,x,\xi)$. Applying assumption~$(\psi2)$, it is clear that $\dps 0=\inf_{\xi \in \R^d} f(\xi) \leq f_\eta(\xi) \leq f(\xi)$, with the upper inequality being a consequence of the fact that $p=\xi$ is a competitor in the minimisation problem~\eqref{eq:psi_eta} defining $\psi_\eta$. It thus remains to show the upper bound in~\eqref{eq:borne_psi_eta}. To that aim, we consider
\begin{equation*}
  f(\xi)-f_\eta(\xi) = \sup_{p \in \R^d}\left\{ f(\xi)-f(p) -\frac{1}{2\eta}|\xi-p|^2\right\}.
\end{equation*}
The convexity of $f$ and the bound~\eqref{eq:subdiff_bnd}, which itself is a consequence of assumption~$(\psi4)$, together entail that, for any $\xi$ and $p$,
\begin{equation*}
  f(\xi)- f(p)\leq 4C\sqrt{1+|\xi|^2} \ |\xi-p|.
\end{equation*}
Using this estimate and explicitly solving to obtain an upper bound, it follows that
\begin{equation*}
  f(\xi)-f_\eta(\xi)
  \leq
  \sup_{p\in\R^d}\left\{ 4C\sqrt{1+|\xi|^2} \ |\xi-p| -\frac{1}{2\eta}|\xi-p|^2\right\}
  \leq
  8\eta\,C^2\b(1+|\xi|^2\b),
\end{equation*}
which implies the upper bound in~\eqref{eq:borne_psi_eta}. This proves property~\eqref{iii_sec5}.

\smallskip

We now establish property~\eqref{iv_sec5}. Let $p_\xi^\eta\in\R^d$ be the unique minimiser of the problem~\eqref{eq:psi_eta} defining $\psi_\eta$. We have that
\begin{equation*}
  f_\eta(\xi) = f(p_\xi^\eta)+\frac{1}{2\eta}|\xi-p_\xi^\eta|^2.
\end{equation*}
Applying Proposition~12.29 of~\cite{BC17} and using the fact that $p_\xi^\eta$ is a minimiser, we deduce that
\begin{equation*}
  D f_\eta(\xi) = \frac{1}{\eta}\b(\xi-p_\xi^\eta\b) \in \partial f(p_\xi^\eta).
\end{equation*}
Applying the bound~\eqref{eq:subdiff_bnd}, we therefore have
\begin{equation}\label{eq:gradpsieta_bnd}
\b|Df_\eta(\xi)\b| = \frac{1}{\eta}|\xi-p_\xi^\eta| \leq 4 \, C \, \sqrt{1+|p_\xi^\eta|^2}.
\end{equation}
Now, using~\eqref{eq:psi_lower_bound}, property~$(\psi4)$ along with the definition~\eqref{eq:psi_eta} of the Moreau envelope, we find that
\begin{equation*}
  c|p_\xi^\eta|^2\leq f(p_\xi^\eta)\leq f_\eta(\xi)\leq f(\xi)\leq C\b(1+|\xi|^2\b).
\end{equation*}
Combining this observation with~\eqref{eq:gradpsieta_bnd}, we obtain property~\eqref{iv_sec5}.



\smallskip

We next establish property~\eqref{vi_sec5}. Due to the ordering property of the Legendre--Fenchel transform, we deduce from~\eqref{eq:borne_psi_eta} the following upper bound:
\begin{equation} \label{eq:upper_bound}
(\psi_\eta)^*(\omega,x,\zeta)
\geq
\psi^*(\omega,x,\zeta)
\quad\text{for any $(\omega,x,\zeta)\in \Omega\times\R^d\times \R^d$}.
\end{equation}
We now establish a lower bound on $\psi^*$. In view of~\eqref{eq:borne_psi_eta} and~\eqref{eq:psi_lower_bound}, we write that
$$
\psi(\omega,x,\xi) \leq \psi_\eta(\omega,x,\xi) + 8 \eta \, C^2 (1+|\xi|^2) \leq \psi_\eta(\omega,x,\xi) + 8 \eta \, C^2 + 8 \eta \, \frac{C^2}{c} \, \psi(\omega,x,\xi).
$$
Whenever $\eta \leq c/(16 C^2)$, we see that $m_\eta = 1 - 8 \eta \, C^2/c \geq 1/2 > 0$, and thus
$$
\psi(\omega,x,\xi) \leq \frac{\psi_\eta(\omega,x,\xi)}{m_\eta} + \eta \frac{8 C^2}{m_\eta},
$$
which is~\eqref{eq:utile}. We deduce from the above inequality that 
\begin{equation} \label{eq:lower_bound}
  \psi^*(\omega,x,\zeta)
  \geq
  \frac{(\psi_\eta)^*(\omega,x,m_\eta \, \zeta)}{m_\eta} - \eta \frac{8 C^2}{m_\eta}
  \qquad \text{for any $(\omega,x,\zeta)\in \Omega\times\R^d\times \R^d$}.
\end{equation}
Collecting~\eqref{eq:upper_bound} and~\eqref{eq:lower_bound}, we obtain~\eqref{eq:borne_psi_eta_star}. In addition, the function $(\psi_\eta)^*(\omega,x,\zeta)$ is convex in $\zeta$ because it is the Legendre--Fenchel transform, and so a supremum of convex functions (see~\eqref{eq:LFEquivalence} and the discussion in Section~\ref{sec:InitialExample}).

\smallskip

We now turn to establishing property~\eqref{vii_sec5}. In view of property~\eqref{ii_sec5}, we know that $\psi_\eta$ is convex. Following the proof of Proposition 8.26 of~\cite{BC17}, we now establish a more precise statement. For convenience, we again set $f(\xi) = \psi(\omega,x,\xi)$ and $f_\eta(\xi) = \psi_\eta(\omega,x,\xi)$. Consider $\xi_1$ and $\xi_2$ in $\R^d$ and some $\alpha \in (0,1)$. For $i=1,2$, we choose some $\Lambda_i > f_\eta(\xi_i)$, so that there exists some $p_i \in \R^d$ such that $\dps f(p_i) + |\xi_i-p_i|^2/(2\eta) < \Lambda_i$. By definition of $f_\eta\b(\alpha \xi_1 + (1-\alpha) \xi_2\b)$ and using the strong convexity of $f$ and $\cdot \mapsto | \cdot |^2$, we have
\begin{align}
  & f_\eta\b(\alpha \xi_1 + (1-\alpha) \xi_2\b)
  \nonumber
  \\
  &\leq
  f\b(\alpha p_1 + (1-\alpha) p_2\b) + \frac{1}{2\eta} \big| \alpha (\xi_1-p_1) + (1-\alpha) (\xi_2-p_2) \big|^2
  \nonumber
  \\
  &\leq
  \alpha f(p_1) + (1-\alpha) f(p_2) - c \, \alpha(1-\alpha) \, |p_1 - p_2|^2 + \frac{\alpha}{2\eta} |\xi_1-p_1|^2 + \frac{1-\alpha}{2\eta} |\xi_2-p_2|^2 - \frac{\alpha(1-\alpha)}{2\eta} \big| \xi_1-p_1 - \xi_2+p_2 \big|^2
  \nonumber
  \\
  &\leq
  \alpha \Lambda_1 + (1-\alpha) \Lambda_2 - \frac{\alpha(1-\alpha)}{2} R
  \label{eq:combettes}
\end{align}
with
\begin{align*}
  R
  &=
  2 c |p_1 - p_2|^2 + \frac{1}{\eta} \big| (\xi_1-p_1) - (\xi_2-p_2) \big|^2
  \\
  &=
  \left(2c+\frac{1}{\eta} \right) |p_1 - p_2|^2 + \frac{1}{\eta} |\xi_1 - \xi_2|^2 - \frac{2}{\eta} (\xi_1-\xi_2) \cdot (p_1-p_2)
  \\
  & \geq
  \left(2c+\frac{1}{\eta} \right) |p_1 - p_2|^2 + \frac{1}{\eta} |\xi_1 - \xi_2|^2 - \frac{1}{2 \tau \eta} |\xi_1 - \xi_2|^2 - \frac{2 \tau}{\eta} |p_1 - p_2|^2,
\end{align*}
where we have used Young's inequality in the last line for some $\tau > 0$. Choosing $\tau$ such that $2 \tau / \eta = 2c + 1/\eta$, we deduce that
$$
R \geq \frac{1}{\eta} \left( 1 - \frac{1}{2\tau} \right) |\xi_1 - \xi_2|^2 = \frac{2c}{1+2c \eta} |\xi_1 - \xi_2|^2.
$$
Introducing this lower bound in~\eqref{eq:combettes} and passing to the limit $\Lambda_i \to f_\eta(\xi_i)$ for $i=1,2$, we deduce that
$$
f_\eta\b(\alpha \xi_1 + (1-\alpha) \xi_2\b)
\leq
\alpha f_\eta(\xi_1) + (1-\alpha) f_\eta(\xi_2) - \alpha(1-\alpha) \, \frac{c}{1+2c \eta} \, |\xi_1 - \xi_2|^2.
$$
The function $\psi_\eta(\omega,x,\xi)$ is thus strongly convex with respect to $\xi$, in the sense that the function $\dps \xi \mapsto \psi_\eta(\omega,x,\xi) - \frac{c}{1+2c\eta} |\xi|^2$ is convex.

\smallskip

Finally, we establish property~\eqref{viii_sec5}. 
Since $\xi \mapsto \psi_\eta(\omega,x,\xi)$ is $\CC^1$ (see property~\eqref{ii_sec5}), we deduce from the strong convexity of $\psi_\eta$ that, for any $\xi_1$ and $\xi_2$ in $\R^d$, we have
\begin{equation} \label{eq:combettes2}
\frac{2c}{1+2c \eta} | \xi_1 - \xi_2 | \leq | D_\xi \psi_\eta(\omega,x,\xi_1) - D_\xi \psi_\eta(\omega,x,\xi_2) |.
\end{equation}
Now consider $\zeta \in \R^d$. The Legendre--Fenchel transform of $\psi_\eta$ is
$$
(\psi_\eta)^*(\omega,x,\zeta) = \sup_{p \in \R^d} \{ p \cdot \zeta - \psi_\eta(\omega,x,p) \},
$$
and we denote $p^\zeta \in \R^d$ the unique supremizer of the above problem, which satisfies $\zeta = D_\xi \psi_\eta(\omega,x,p^\zeta)$. We note that $(\psi_\eta)^*$ is differentiable by Proposition~18.9 in \cite{BC17}, and we have $D_\zeta (\psi_\eta)^*(\omega,x,\zeta) = p^\zeta$. Thus, using~\eqref{eq:combettes2} with $\xi_1 = p^\zeta$ and $\xi_2 = 0$, we obtain
$$
\left| D_\zeta (\psi_\eta)^*(\omega,x,\zeta) \right| \leq \frac{1+2c\eta}{2c} | \zeta - D_\xi \psi_\eta(\omega,x,0) |.
$$
Using~\eqref{eq:bound_d_psi_eta} and restricting ourselves to the case $\eta \leq 1$, we deduce~\eqref{eq:bound_d_psi_eta_star}.

We use a similar argument to show that $D_\zeta (\psi_\eta)^*$ is Lipschitz-continuous with respect to its third variable. Consider $\zeta_i \in \R^d$ with $i=1,2$. As above, we define $p^{\zeta_i} \in \R^d$ which satisfies $\zeta_i = D_\xi \psi_\eta(\omega,x,p^{\zeta_i})$. We then have $D_\zeta (\psi_\eta)^*(\omega,x,\zeta_i) = p^{\zeta_i}$, and thus, using~\eqref{eq:combettes2} with $\xi_1 = p^{\zeta_1}$ and $\xi_2 = p^{\zeta_2}$, we obtain
$$
\left| D_\zeta (\psi_\eta)^*(\omega,x,\zeta_1) - D_\zeta (\psi_\eta)^*(\omega,x,\zeta_2) \right| \leq \frac{1+2c\eta}{2c} | \zeta_1 - \zeta_2 |.
$$
We hence get that $D_\zeta (\psi_\eta)^*(\omega,x,\cdot)$ is indeed Lipschitz continuous (with a Lipschitz constant uniform in $\eta$ and $(\omega,x)$). This of course implies that $(\psi_\eta)^*(\omega,x,\cdot)$ is $\CC^1$. This completes the proof of Lemma~\ref{th:convex_approx}.
\end{proof}
}

\blue{
\begin{lemma}\label{th:dual_char}
Let $\eta > 0$, and suppose that $\Psi^\eta_{t,\omega}:\HBC\to\R$ takes the form
\begin{equation} \label{eq:def_precise}
  \Psi^\eta_{t,\omega}[v] = \int_\Dom \psi_\eta\left(\omega,x,D_x\dot{\overline{y}}(t)+D_xv\right)\dx,
\end{equation}
where $\overline{y}\in\HH^1([0,T];\HH^1(\Dom))$ and $\psi_\eta$ is defined in Lemma~\ref{th:convex_approx}, assuming that $\psi$ satisfies the assumptions of Section~\ref{sec:dissipation_assumptions}. For any $u\in\LL^2([0,T]\times\Omega;\HBC)$, consider $f = \div_x \big( A(\omega,x) \left(D_x\overline{y}(t)+D_xu(t)\right) \big) \in \LL^2\left([0,T]\times\Omega;\HBCPrime\right)$, where $A$ satisfies the assumptions of Section~\ref{sec:elasticity_assumptions}. Assume furthermore that $\eta$ is sufficiently small (e.g. $\eta \leq c/(16 C^2)$), so that $m_\eta$ defined by~\eqref{eq:def_m_eta} is positive and bounded away from 0. 

Then there exists some $\sigma_f^\eta \in \LL^2([0,T]\times\Omega\times\Dom)^d$ such that $\sigma_f^\eta(t,\omega) \in \L2Pot(\Dom)^\perp$ for almost every $(t,\omega)\in[0,T]\times\Omega$, which satisfies the bound
\begin{equation} \label{eq:bound_sigma}
  \| \sigma_f^\eta \|_{\LL^2([0,T]\times\Omega\times\Dom)^d} \leq K \Big( 1+\left\| D_x\overline{y} \right\|_{\HH^1([0,T];\LL^2(\Dom))^d} + \|D_xu\|_{\LL^2([0,T]\times\Omega\times\Dom)^d} \Big)
\end{equation}
for some constant $K$ which depends only on $\Dom$, $T$ and on the bounds assumed on $A$ and $\psi$, and for which we can write
\begin{equation} \label{eq:Psi_star}
  (\Psi^\eta_{t,\omega})^*[f] = \int_\Dom (\psi_\eta)^*\big(\omega,x,\sigma^\eta_f-A(\omega,x)D_x\overline{y}-A(\omega,x)D_xu\big) -\b(\sigma_f^\eta-A(\omega,x)D_x\overline{y}-A(\omega,x)D_xu\b) \cdot D_x\dot{\overline{y}} \, \dx
\end{equation}
for almost every $(t,\omega)\in[0,T]\times\Omega$.
\end{lemma}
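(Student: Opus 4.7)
The strategy is to compute $(\Psi^\eta_{t,\omega})^*[f]$ by identifying the (unique) maximiser $v^\star \in \HBC$ of the concave functional $v \mapsto \langle f, v\rangle_{\HBC} - \Psi^\eta_{t,\omega}[v]$ via its first-order optimality condition, and then reading off the divergence-free field $\sigma_f^\eta$ before applying the Legendre--Fenchel equivalence~\eqref{eq:LFEquivalence}. I would work pointwise in $(t,\omega)\in[0,T]\times\Omega$: for such fixed $(t,\omega)$, this functional is continuous on $\HBC$ and, by Lemma~\ref{th:convex_approx}(\ref{vii_sec5}) combined with the Poincar\'e inequality~\eqref{eq:poincare}, strongly convex and coercive. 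A unique minimiser $v^\star = v^\star(t,\omega)$ of $\Psi^\eta_{t,\omega}[v] - \langle f,v\rangle_{\HBC}$ thus exists, at which the supremum defining $(\Psi^\eta_{t,\omega})^*[f]$ is attained.

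Since $\psi_\eta(\omega,x,\cdot)$ is $\CC^1$ by Lemma~\ref{th:convex_approx}(\ref{ii_sec5}), the Euler--Lagrange equation for $v^\star$ reads, for every $w\in\HBC$,
\begin{equation*}
\int_\Dom D_\xi \psi_\eta\b(\omega,x,D_x\dot{\overline{y}}+D_xv^\star\b) \cdot D_xw \dx = \langle f, w\rangle_{\HBC} = -\int_\Dom A(\omega,x)\b(D_x\overline{y}+D_xu\b) \cdot D_xw \dx,
\end{equation*}
the second equality using $f = \div_x\b(A(\omega,x)(D_x\overline{y}+D_xu)\b)$ and the sign convention from Section~\ref{sec:Deformations}. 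Consequently, the field
\begin{equation*}
\sigma_f^\eta(t,\omega,x) := D_\xi \psi_\eta\b(\omega,x,D_x\dot{\overline{y}}+D_xv^\star\b) + A(\omega,x)\b(D_x\overline{y}+D_xu\b)
\end{equation*}
belongs to $\L2Pot(\Dom)^\perp$ for almost every $(t,\omega)\in[0,T]\times\Omega$; its measurability in $(t,\omega)$ follows from the uniqueness of $v^\star$ and the continuous dependence on the data.

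To derive~\eqref{eq:Psi_star}, I observe that by construction $\sigma_f^\eta - A(D_x\overline{y}+D_xu) = D_\xi\psi_\eta\b(\omega,x,D_x\dot{\overline{y}}+D_xv^\star\b)$, so the Legendre--Fenchel equivalence~\eqref{eq:LFEquivalence} applied to $\psi_\eta(\omega,x,\cdot)$ gives
\begin{equation*}
\psi_\eta\b(\omega,x,D_x\dot{\overline{y}}+D_xv^\star\b) + (\psi_\eta)^*\b(\omega,x,\sigma_f^\eta - A(D_x\overline{y}+D_xu)\b) = \b(D_x\dot{\overline{y}}+D_xv^\star\b) \cdot \b(\sigma_f^\eta - A(D_x\overline{y}+D_xu)\b).
\end{equation*}
Substituting into $(\Psi^\eta_{t,\omega})^*[f] = \langle f, v^\star\rangle_{\HBC} - \Psi^\eta_{t,\omega}[v^\star]$, using the above expression for $\langle f,v^\star\rangle_{\HBC}$, and exploiting the orthogonality $\int_\Dom \sigma_f^\eta \cdot D_xv^\star \dx = 0$ (since $\sigma_f^\eta \in \L2Pot(\Dom)^\perp$ and $D_xv^\star \in \L2Pot(\Dom)$), the terms involving $D_xv^\star$ cancel, and a short algebraic manipulation produces the stated formula.

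The principal obstacle is the bound~\eqref{eq:bound_sigma}. I would test the Euler--Lagrange equation with $w=v^\star$; the strong monotonicity of $D_\xi\psi_\eta(\omega,x,\cdot)$ with constant $c/(1+2c\eta)$ (a direct consequence of Lemma~\ref{th:convex_approx}(\ref{vii_sec5})), together with Cauchy--Schwarz and the linear-growth bound~\eqref{eq:bound_d_psi_eta} used to control the $D_\xi\psi_\eta(\omega,x,D_x\dot{\overline{y}})$ and $A(D_x\overline{y}+D_xu)$ contributions, yields, uniformly in $(t,\omega)$ and for $\eta \leq c/(16 C^2)$,
\begin{equation*}
\|D_xv^\star\|_{\LL^2(\Dom)} \leq K_1 \b(1 + \|D_x\overline{y}\|_{\LL^2(\Dom)} + \|D_x\dot{\overline{y}}\|_{\LL^2(\Dom)} + \|D_xu\|_{\LL^2(\Dom)}\b).
\end{equation*}
Feeding this estimate back into~\eqref{eq:bound_d_psi_eta} gives a pointwise bound on $|\sigma_f^\eta(t,\omega,x)|$ by $1 + |D_x\dot{\overline{y}}| + |D_xv^\star| + |D_x\overline{y}| + |D_xu|$, and integrating in $(t,\omega,x)$ then delivers~\eqref{eq:bound_sigma}. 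The key subtlety is that the constant $K$ must remain bounded as $\eta \to 0$, which is guaranteed by the explicit threshold $\eta \leq c/(16 C^2)$ ensuring $m_\eta \geq 1/2$, in line with Lemma~\ref{th:convex_approx}(\ref{vi_sec5}).
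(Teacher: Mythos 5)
Your proposal follows essentially the same route as the paper: obtain $(\Psi^\eta_{t,\omega})^*[f]$ from the unique minimiser of $v\mapsto\Psi^\eta_{t,\omega}[v]-\langle f,v\rangle_{\HBC}$ (Direct Method, strong convexity, coercivity), read off the Euler--Lagrange equation, set $\sigma_f^\eta := D_\xi\psi_\eta\b(\omega,\cdot,D_x\dot{\overline{y}}+D_xv^\star\b)+A(D_x\overline{y}+D_xu)$, observe it is divergence--free from the Euler--Lagrange relation, and convert via the Legendre--Fenchel identity together with the orthogonality $\int_\Dom\sigma_f^\eta\cdot D_xv^\star=0$. Your algebra leading to~\eqref{eq:Psi_star} is correct and is an equivalent rearrangement of the one in the paper.

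The one place you genuinely diverge is the derivation of the bound~\eqref{eq:bound_sigma}. The paper estimates $\|D_xv^\star\|_{\LL^2}$ by taking $v=0$ as a competitor in the infimum formulation of $(\Psi^\eta_{t,\omega})^*$, invoking~\eqref{eq:utile}, \eqref{eq:psi_lower_bound} and Young's inequality, which is where $m_\eta$ actually enters. You instead test the Euler--Lagrange equation with $w=v^\star$ and exploit the strong monotonicity of $D_\xi\psi_\eta(\omega,x,\cdot)$ (with constant $2c/(1+2c\eta)$, not $c/(1+2c\eta)$ — a factor-of-two slip) together with the linear growth~\eqref{eq:bound_d_psi_eta}. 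That is an equally valid and perhaps slightly cleaner route. However, the final justification you give for the constant being uniform as $\eta\to 0$ — that it is ``guaranteed by the explicit threshold $\eta\leq c/(16C^2)$ ensuring $m_\eta\geq 1/2$'' — does not match your own argument: $m_\eta$ never appears in the monotonicity computation. In your approach the uniformity is automatic because $2c/(1+2c\eta)\geq 2c/(1+2c)$ for $\eta\leq 1$, and the constant $\mathcal{C}$ in~\eqref{eq:bound_d_psi_eta} is already $\eta$--independent. The threshold on $\eta$ is thus a hypothesis you inherit from the lemma statement rather than something your specific estimate requires. The misattribution does not invalidate the proof, but it should be corrected.
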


\begin{proof}
We recall that, for any $f\in\HBCPrime$, we have
\begin{align}
    (\Psi^\eta_{t,\omega})^*[f] &= \sup\left\{ \< f, v\>_{\HBCPrime,\HBC} - \int_\Dom \psi_\eta\left(\omega,x,D_x\dot{\overline{y}}(t)+D_xv\right) \dx, \quad v\in\HBC\right\}\notag
    \nonumber
    \\
    &=-\inf\left\{\int_\Dom \psi_\eta\left(\omega,x,D_x\dot{\overline{y}}(t)+D_xv\right)\dx-\<f,v\>_{\HBCPrime,\HBC}, \quad v\in\HBC\right\}.
    \label{eq:sup/inf_prob}
\end{align}
A straightforward application of the Direct Method of the Calculus of Variations entails that a unique solution to the latter minimisation problem exists for almost every $(t,\omega)\in[0,T]\times\Omega$, which we denote $v_f^\eta(t,\omega,\cdot) \in \HBC$. The Euler-Lagrange equations read as follows: for almost every $(t,\omega)\in[0,T]\times\Omega$, the function $\zeta_f^\eta(t,\omega,x) = D_\xi\psi_\eta\left(\omega,x,D_x\dot{\overline{y}}(t,x)+D_xv_f^\eta(t,\omega,x)\right)$ satisfies
$$
\int_\Dom \zeta_f^\eta(t,\omega,x) \cdot D_x w(x) \dx = \<f,w\>_{\HBCPrime,\HBC} \qquad\text{for any $w\in\HBC$}.
$$
Using the regularity of $v_f^\eta$ and~\eqref{eq:bound_d_psi_eta}, we see that $\zeta_f^\eta(t,\omega,\cdot) \in \LL^2(\Dom)^d$ for almost every $(t,\omega)\in[0,T]\times \Omega$. Since $f = \div_x \big( A(\omega,x) \left(D_x\overline{y}(t)+D_xu(t)\right) \big)$ we infer that for almost every $(t,\omega)\in[0,T]\times\Omega$
\begin{equation} \label{eq1_a} 
0 = \int_\Dom \b[\zeta_f^\eta(t,\omega,x) + A(\omega,x) \left(D_x\overline{y}(t) + D_xu(t) \right) \b] \cdot D_xw(x)\dx \qquad\text{for any $w \in \HBC$}.
\end{equation}
We now establish uniform bounds on $v_f^\eta$ and $\zeta_f^\eta$. To get a bound on $v_f^\eta$, we use the test function $v=0$ in~\eqref{eq:sup/inf_prob}, the bounds~\eqref{eq:utile} and~\eqref{eq:psi_lower_bound} and the specific choice of $f$:
\begin{align*}
\int_\Dom \psi_\eta\left(\omega,x,D_x\dot{\overline{y}}\right)\dx
&\geq
\int_\Dom \psi_\eta\left(\omega,x,D_x\dot{\overline{y}}+D_xv_f^\eta\right) \dx - \<f,v_f^\eta\>
\\
&\geq
m_\eta \int_\Dom \psi\left(\omega,x,D_x\dot{\overline{y}}+D_xv_f^\eta\right) \dx - \<f,v_f^\eta\> - 8 \eta C^2 \Leb^d(\Dom)
\\
& \geq
c \, m_\eta \left\| D_x\dot{\overline{y}}+D_xv_f^\eta \right\|_{\LL^2(\Dom)}^2 + \int_\Dom D_xv_f^\eta \cdot A(\omega,x) \left(D_x\overline{y}+D_xu\right) - 8 \eta C^2 \Leb^d(\Dom)
\\
& \geq
c \, m_\eta \left\| D_x\dot{\overline{y}}+D_xv_f^\eta \right\|_{\LL^2(\Dom)}^2 - \overline{A} \, \left\| D_xv_f^\eta \right\|_{\LL^2(\Dom)} \| D_x\overline{y}+D_xu \|_{\LL^2(\Dom)} - 8 \eta C^2 \Leb^d(\Dom),
\end{align*}
where $\Leb^d(\Dom)$ denotes the volume of the domain $\Dom$.

We hence deduce, using~$(\psi4)$, \eqref{eq:borne_psi_eta} and Young's inequality, that
\begin{align*}
& C \left( \Leb^d(\Dom) + \left\| D_x\dot{\overline{y}} \right\|_{\LL^2(\Dom)}^2 \right)
\\
&\geq
\int_\Dom \psi\left(\omega,x,D_x\dot{\overline{y}}(t)\right)\dx
\\
&\geq
\int_\Dom \psi_\eta\left(\omega,x,D_x\dot{\overline{y}}(t)\right)\dx
\\
&\geq
\frac{c m_\eta}{2} \, \| D_xv_f^\eta \|_{\LL^2(\Dom)}^2 - c \, m_\eta \left\| D_x\dot{\overline{y}} \right\|_{\LL^2(\Dom)}^2 - \overline{A} \, \| D_xv_f^\eta \|_{\LL^2(\Dom)} \| D_x\overline{y}+D_xu \|_{\LL^2(\Dom)} - 8 \eta C^2 \Leb^d(\Dom)
\\
&\geq
\frac{c m_\eta}{4} \, \| D_xv_f^\eta \|_{\LL^2(\Dom)}^2 - c \, m_\eta \left\| D_x\dot{\overline{y}} \right\|_{\LL^2(\Dom)}^2 - \frac{\overline{A}^2}{c m_\eta} \, \| D_x\overline{y}+D_xu \|^2_{\LL^2(\Dom)} - 8 \eta C^2 \Leb^d(\Dom).
\end{align*}
Integrating over $(t,\omega)$ and using that $m_\eta$ is both bounded above and away from 0, we find that
$$
\|D_x v_f^\eta\|_{\LL^2([0,T]\times\Dom\times\Omega)^d}\leq K \Big( 1+\left\| D_x \overline{y} \right\|_{\HH^1([0,T]; \LL^2(\Dom))^d} + \|D_xu\|_{\LL^2([0,T]\times\Omega\times\Dom)^d} \Big)
$$
for some constant $K$ which depends only on $\Dom$, $T$ and on the bounds assumed on $A$ and $\psi$. Next, using~\eqref{eq:bound_d_psi_eta} and the above estimate, we find that
\begin{align}
  \|\zeta_f^\eta\|^2_{\LL^2([0,T]\times\Dom\times\Omega)^d}
  &\leq 2 \mathcal{C}^2\left(T\Leb^d(\Dom)+\|D_x\dot{\overline{y}}+D_xv_f^\eta\|^2_{\LL^2([0,T]\times\Omega\times\Dom)^d}\right)
  \nonumber
  \\
  &\leq K \left( 1 + \|D_x\dot{\overline{y}}\|^2_{\LL^2([0,T]\times\Dom)^d} + \left\| D_x \overline{y} \right\|^2_{\HH^1([0,T]; \LL^2(\Dom))^d} + \|D_xu\|^2_{\LL^2([0,T]\times\Omega\times\Dom)^d} \right),
  \label{eq:estim_zeta_f}
\end{align}
and hence $\zeta^\eta_f\in\LL^2([0,T]\times\Omega\times\Dom)$.
We now introduce $\sigma_f^\eta := \zeta_f^\eta + A(\omega,x) \left(D_x\overline{y} + D_xu \right)$, which lies in $\LL^2([0,T]\times\Omega\times\Dom)$ as a consequence of the fact that $\zeta_f^\eta \in \LL^2([0,T]\times\Omega\times\Dom)$ and the properties of $A$, $\overline{y}$ and $u$. In view of~\eqref{eq1_a}, we have that $\sigma_f^\eta(t,\omega) \in \L2Pot(\Dom)^\perp$ for almost every $(t,\omega)\in[0,T]\times\Omega$. In view of the bound~\eqref{eq:estim_zeta_f} on $\zeta_f^\eta$, we get that $\sigma_f^\eta$ satisfies~\eqref{eq:bound_sigma}.

\smallskip

Finally, we show~\eqref{eq:Psi_star}. Since $\dps \zeta_f^\eta(t,\omega,x) = D_\xi\psi_\eta\b(\omega,x,D_x\dot{\overline{y}}(t)+D_xv_f^\eta\b)$ for almost every $(t,\omega,x)$, we infer from the property~\eqref{eq:LFEquivalence} of the Legendre--Fenchel transform that
\begin{equation} \label{eq:ralf}
  \psi_\eta\left(\omega,x,D_x\dot{\overline{y}}+D_x v_f^\eta\right)-\zeta_f^\eta \cdot \b(D_x\dot{\overline{y}}+D_xv_f^\eta\b) = -(\psi_\eta)^*(\omega,x,\zeta_f^\eta) \qquad \text{a.e. in $(t,\omega,x)$}.
\end{equation}
Recalling that $v_f^\eta$ achieves the infimum sought in~\eqref{eq:sup/inf_prob}, we find that, for almost every $(t,\omega)$,
\begin{align*}
  -(\Psi^\eta_{t,\omega})^*[f]
  &=
  \int_\Dom\psi_\eta\left(\omega,x,D_x\dot{\overline{y}}+D_x v_f^\eta\right) - \<f,v_f^\eta\>_{\HBCPrime,\HBC}
  \\
  &=
  \int_\Dom\psi_\eta\left(\omega,x,D_x\dot{\overline{y}}+D_x v_f^\eta\right) + \int_\Dom D_xv_f^\eta \cdot A(\omega,x) \left(D_x\overline{y}+D_xu\right)
  \\  
  &= \int_\Dom \zeta_f^\eta \cdot \b(D_x\dot{\overline{y}}+D_xv_f^\eta\b) - (\psi_\eta)^*(\omega,x,\zeta_f^\eta) + \int_\Dom D_xv_f^\eta \cdot (\sigma_f^\eta - \zeta_f^\eta), 
\end{align*}
where, in the last line, we have used~\eqref{eq:ralf} and the relation between $\sigma_f^\eta$ and $\zeta_f^\eta$. Using that $\sigma_f^\eta(t,\omega) \in \L2Pot(\Dom)^\perp$, we obtain~\eqref{eq:Psi_star}. This concludes the proof of Lemma~\ref{th:dual_char}.
\end{proof}
}

\begin{remark}
\blue{We note that, when $\Dom=(a,b)\subset\R$ and $\GDir=\{a,b\}$, there is a particularly precise characterisation of $\sigma_f^\eta$. In this case, $\L2Pot(\Dom)$ is the space of mean--zero functions, and $\L2Pot(\Dom)^\perp$ is simply the space of constant functions. By definition, $\zeta_f^\eta = D_\xi\psi_\eta(\omega,x,D_x\dot{\overline{y}}+D_xv_f^\eta)$, hence $D_x\dot{\overline{y}}+D_xv_f^\eta = D_\sigma(\psi_\eta)^*(\omega,x,\zeta_f^\eta)$ (see~\eqref{eq:LFEquivalence}). We thus obtain that $D_\sigma(\psi_\eta)^*(\omega,\cdot,\zeta_f^\eta) \in \L2Pot(\Dom)$ a.e. in $(t,\omega)$. Using the relation between $\zeta_f^\eta$ and $\sigma_f^\eta$, it follows that, for almost every $(t,\omega)\in[0,T]\times\Omega$, $\sigma_f^\eta(t,\omega)\in\R$ is the constant such that 
\begin{equation*}
  \int_\Dom D_\sigma(\psi_\eta)^*\big(\omega,x,\sigma_f^\eta(t,\omega)-A \, D_x\overline{y} - A \, D_xu \big) \dx = 0.
\end{equation*}
This characterisation of $\sigma_f^\eta$ (in the regime $\eta \to 0$) was exploited in the numerical scheme presented in Section~\ref{sec:numerics}.}
\end{remark}

\section{Existence of solutions for fixed $\eps$}
\label{sec:wellposedness}

With the preliminaries of Section~\ref{sec:setup} now in place, we restate Theorem~\ref{th:eps_existence} in a precise form, which asserts that problem~\eqref{eq:EvolutionProblem} (namely, the evolution problem at fixed $\eps > 0$) is well--posed.

\begin{theorem}[Rigorous statement of Theorem~\ref{th:eps_existence}]
For any $\eps>0$ and any $\overline{y} \in \HH^1\b([0,T];\HH^1(\Dom)\b)$, there exists a unique solution $u^\eps \in \HH^1\big([0,T];\LL^2(\Omega;\HBC)\big)$ such that~\eqref{eq:EvolutionProblem} holds for almost every $(t,\omega)\in[0,T]\times\Omega$.
\end{theorem}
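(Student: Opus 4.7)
My plan is to establish existence through a De Giorgi minimizing movements (time-discretization) scheme, and uniqueness via a direct energy estimate exploiting the strong convexity of $\psi$ assumed in $(\psi3)$.

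For existence at fixed $\omega\in\Omega$ and fixed $\eps>0$, I would partition $[0,T]$ into intervals of size $\tau=T/N$, set $t_k=k\tau$, $u^\tau_0=0$, and define $u^\tau_k$ recursively as the (unique) minimizer of the strictly convex functional
\begin{equation*}
  u \mapsto \tau\,\Psi^\eps_{t_k,\omega}\!\left[\tfrac{u-u^\tau_{k-1}}{\tau}\right] + \Phi^\eps_{t_k,\omega}[u]
\end{equation*}
over $\HBC$. Existence of a unique minimizer at each step follows from the Direct Method: coercivity is guaranteed by~\eqref{eq:psi_lower_bound}, the ellipticity bound $(A3)$, and the Poincar\'e inequality~\eqref{eq:poincare}; strict convexity follows from $(\psi3)$ and $(A3)$. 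The Euler--Lagrange inclusion at each step reads
\begin{equation*}
  0 \in \partial\Psi^\eps_{t_k,\omega}\!\left[\tfrac{u^\tau_k - u^\tau_{k-1}}{\tau}\right] + \nabla\Phi^\eps_{t_k,\omega}[u^\tau_k].
\end{equation*}

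Next, I would derive uniform a priori estimates by testing the discrete inclusion against $(u^\tau_k - u^\tau_{k-1})/\tau$, summing over $k$, and exploiting the Moreau--Yosida-type chain rule for the quadratic $\Phi$. The lower bound $\psi(\omega,x,v)\ge c|v|^2$ yields a uniform bound on the incremental quotients in $\LL^2([0,T];\HBC)$, while $(A3)$ yields a uniform bound on the piecewise constant interpolant in $\LL^\infty([0,T];\HBC)$. Consequently, the piecewise affine interpolant $u^\tau$ is bounded in $\HH^1([0,T];\HBC)$ uniformly in $\tau$. Up to a subsequence, $u^\tau$ converges weakly in $\HH^1([0,T];\HBC)$ to some $u^\eps\in\HH^1([0,T];\HBC)$. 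Using~\eqref{eq:subdiff_bnd}, the dissipative stresses $\sigma^\tau$ extracted from the inclusion are bounded in $\LL^2([0,T]\times\Dom)^d$, and extract a weak limit $\sigma^\star$. Passing to the limit in the linear term $\nabla\Phi^\eps_{t,\omega}$ is immediate; identifying $\sigma^\star\in\partial_\xi\psi(\omega,x/\eps,D_x\dot{\overline{y}}+D_x\dot{u}^\eps)$ is done via Minty's monotonicity trick, using the convexity of $\Psi^\eps_{t,\omega}$ and the limsup inequality obtained by testing with $\dot{u}^\tau$ itself. Measurability in $\omega$ is inherited from the measurability of the parameters $A$ and $\psi$ together with the uniqueness of the minimizer at each discrete step (allowing us to avoid invoking measurable selection theorems).

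For uniqueness, let $u_1,u_2$ be two solutions with associated dissipative stresses $\sigma_i(t,x)\in\partial_\xi\psi(\omega,x/\eps,D_x\dot{\overline{y}}+D_x\dot{u}_i)$ satisfying $\sigma_i=-A(\omega,x/\eps)(D_x\overline{y}+D_xu_i)+ (\text{divergence-free correction})$ in the weak sense. Testing the difference of the two evolution inclusions with $\dot{u}_1-\dot{u}_2$ and using strong monotonicity of $\partial_\xi\psi$ from $(\psi3)$ together with the linear, symmetric, positive character of $\nabla\Phi^\eps_\omega$, I obtain
\begin{equation*}
  \tfrac{1}{2}\tfrac{d}{dt}\!\int_\Dom (D_xu_1-D_xu_2)\cdot A(\omega,x/\eps)(D_xu_1-D_xu_2)\,\dx + 2c\,\|D_x\dot{u}_1-D_x\dot{u}_2\|^2_{\LL^2(\Dom)} \le 0.
\end{equation*}
Integrating over $[0,t]$ and using the common initial condition yields $u_1=u_2$.

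The main technical hurdle I anticipate is the identification of the limit $\sigma^\star$ as an element of $\partial_\xi\psi$ after passing to the limit $\tau\to 0$: the inclusion relates the dissipative stress to the strain rate, and one must combine the weak convergence of $D_x\dot{u}^\tau$ with Minty's trick and the lower semicontinuity of $\Psi^\eps_{t,\omega}$ to close this argument. A secondary point is to ensure joint $(t,\omega,x)$-measurability of the interpolants $u^\tau$ so that the limit $u^\eps$ lies in $\HH^1([0,T];\LL^2(\Omega;\HBC))$; this is handled by performing the above construction $\omega$-wise and exploiting uniqueness of the minimizers at each discrete step, together with the Bochner-integrability framework recalled in Section~\ref{sec:Correctors}.
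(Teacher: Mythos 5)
Your approach is correct in outline but takes a genuinely different route from the paper. The paper does not discretize in time: it observes that since $\Psi^\eps_{t,\omega}$ is uniformly strongly convex and $\nabla\Phi^\eps_{t,\omega}$ is a bounded affine (hence Lipschitz) map, the ``velocity operator'' $\Vel^\eps_{t,\omega}[u]$, defined as the unique minimizer of $v\mapsto\Psi^\eps_{t,\omega}[v]+\<\nabla\Phi^\eps_{t,\omega}[u],v\>_{\HBC}$, is Lipschitz from $\HBC$ to $\HBC$ with constant $\overline{A}/(2c)$. The evolution inclusion then becomes the ODE $\dot{u}^\eps=\Vel^\eps_{t,\omega}[u^\eps]$, and existence and uniqueness follow from a Banach fixed point argument for the Picard operator $\mathcal{T}^\eps[u](t)=u_0+\int_0^t\Vel^\eps_{s,\omega}[u(s)]\ds$ on a short time interval, iterated to cover $[0,T]$. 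Crucially, the paper runs this fixed-point argument directly in $\LL^2([0,T]\times\Omega;\HBC)$, so $\omega$-measurability of the solution is automatic and no separate selection argument is needed; it also dispenses with Minty's trick entirely, because the map $u\mapsto\dot{u}$ is realized as a bona fide Lipschitz function rather than via a variational limit. Your minimizing-movements scheme is more robust in principle (it would survive replacing the quadratic $W$ by a merely convex, superlinear one, where the velocity operator would no longer be Lipschitz), but here it is heavier machinery, and it leaves two technical loose ends that the paper's route sidesteps. First, since $\dot{\overline{y}}\in\LL^2([0,T];\HH^1(\Dom))$ is not defined pointwise in $t$, the discrete functional $\Psi^\eps_{t_k,\omega}$ must use a time-averaged slope $(\overline{y}(t_k)-\overline{y}(t_{k-1}))/\tau$ rather than $\dot{\overline{y}}(t_k)$; this is a standard fix but should be stated. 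Second, your measurability argument---``avoid invoking measurable selection theorems'' by uniqueness of the minimizer---is essentially right (a Carath\'eodory functional with a unique minimizer has a measurable argmin), but it is exactly the kind of step that the paper's formulation in the mixed space $\LL^2([0,T]\times\Omega;\HBC)$ makes unnecessary, which is a nontrivial saving. Your uniqueness argument is correct and is in spirit the same monotonicity estimate the paper uses to prove the Lipschitz bound~\eqref{eq:Vel_Lipschitz}.
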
  

\medskip
  
The proof of this result is given over the course of the present section. The strategy of the proof is to apply the Banach fixed point theorem in a similar manner to its use in the proof of the Cauchy--Lipschitz existence theorem for ODEs.

\subsection{The velocity operator}
\label{sec:velocity-operator}

We first observe that the functionals $u \mapsto \Phi^\eps_{t,\omega}[u] = \Phi^\eps_\omega[\overline{y}(t)+u]$ and $v \mapsto \Psi^\eps_{t,\omega}[v] = \Psi^\eps_\omega[\dot{\overline{y}}(t)+v]$ are uniformly strongly convex on $\HBC$, since $\Phi^\eps_\omega$ and $\Psi^\eps_\omega$ are. In order to assess their convexity constants, we proceed as follows. We note that, for any $(\omega,x,\xi)$, the function
\begin{equation*}
  \zeta\mapsto W(\omega,x,\xi+\zeta) - \frac12 \, \underline{A} \, |\zeta|^2 = W(\omega,x,\xi+\zeta) -\frac12 \, \underline{A} \, |\xi+\zeta|^2 + \underline{A}\,\xi\cdot\zeta + \frac12 \, \underline{A} \, |\xi|^2
\end{equation*}
is convex, since it is the sum of two convex functions by Assumption~$(A3)$. The same property holds for the function $\zeta\mapsto\psi(\omega,x,\xi+\zeta)-c|\zeta|^2$, in view of Assumption~$(\psi3)$. It therefore follows that $\Phi^\eps_{t,\omega}[u]$ and $\Psi^\eps_{t,\omega}[v]$ are uniformly strongly convex on $\HBC$, since
\begin{align*}
u \mapsto \Phi^\eps_{t,\omega}[u] - \frac12 \, \underline{A} \, \|u\|_{\HBC}^2 &= \int_\Dom W\left(\omega,\frac{\cdot}{\eps},D_x\overline{y}(t)+D_xu\right) - \frac12 \, \underline{A} \, |D_xu|^2\dx
\\
v \mapsto \Psi^\eps_{t,\omega}[v] -c \|v\|_{\HBC}^2 &= \int_\Dom \psi\left(\omega,\frac{\cdot}{\eps},D_x\dot{\overline{y}}(t)+D_xv\right)-c|D_xv|^2\dx
\end{align*}
are both convex functions on $\HBC$.

\medskip

Our first step towards proving existence of solutions is to consider the problem of finding $v\in\HBC$ such that, for any fixed $u\in\HBC$,
\begin{equation}
  0\in\partial\Psi^\eps_{t,\omega}[v]+\nabla \Phi^\eps_{t,\omega}[u].
  \label{eq:Vel_eqn}
\end{equation}
We note first that~\eqref{eq:Vel_eqn} may be viewed as a necessary condition for $v$ being a minimizer of the functional $\mathcal{I}^\eps_{t,\omega}:\HBC\to\R$ defined by
\begin{equation*}
  \mathcal{I}^\eps_{t,\omega}[v] := \Psi^\eps_{t,\omega}[v] +\<\nabla \Phi^\eps_{t,\omega}[u],v\>_{\HBC}.
\end{equation*}
This functional is well--defined, and is uniformly strongly convex on $\HBC$, a property which it inherits from $\Psi^\eps_{t,\omega}$. Thus it has a unique minimizer, denoted $\Vel^\eps_{t,\omega}[u]$, which satisfies~\eqref{eq:Vel_eqn}.

\medskip
  
We now show that $\Vel^\eps_{t,\omega}:\HBC \to \HBC$ is a Lipschitz map. Let $u_1,u_2\in\HBC$: then
\begin{equation*}
  -\nabla \Phi^\eps_{t,\omega}[u_i]\in\partial \Psi^\eps_{t,\omega}\b[\Vel^\eps_{t,\omega}[u_i]\b]\quad\text{for both $i=1$ and $i=2$}.
\end{equation*}
Since $\Psi^\eps_{t,\omega}$ is strongly convex, it follows that $\partial\Psi^\eps_{t,\omega}$ is a strongly monotone set--valued mapping (see Chapter~11 of~\cite{Roubicek}), and hence
\begin{equation}
  \b\<-\nabla \Phi^\eps_{t,\omega}[u_1]+\nabla \Phi^\eps_{t,\omega}[u_2],\Vel^\eps_{t,\omega}[u_1]-\Vel^\eps_{t,\omega}[u_2]\b\>_{\HBC} \geq 2c \, \B\| \Vel^\eps_{t,\omega}[u_1]-\Vel^\eps_{t,\omega}[u_2] \B\|^2_{\HBC}.
  \label{eq:Sepst_lower_bnd}
\end{equation}
Using the explicit form~\eqref{eq:nabla_Phi} of $\nabla\Phi^\eps_\omega$, we obtain that
\begin{equation}
  \b\|\nabla \Phi^\eps_{t,\omega}[u_1]-\nabla\Phi^\eps_{t,\omega} [u_2]\b\|_{\HBCPrime} \leq \| A\|_{\LL^\infty(\Omega \times \R^d)}\|D_x u_1 - D_x u_2\|_{\LL^2(\Dom)}\leq \overline{A} \, \|u_1-u_2\|_{\HBC}.
  \label{eq:Sepst_upper_bnd}
\end{equation}
Collecting~\eqref{eq:Sepst_lower_bnd} and~\eqref{eq:Sepst_upper_bnd}, we obtain
\begin{equation}
  \B\|\Vel^\eps_{t,\omega}[u_2]-\Vel^\eps_{t,\omega}[u_1]\B\|_{\HBC} \leq \frac{\overline{A}}{2c} \ \|u_2-u_1\|_{\HBC},
  \label{eq:Vel_Lipschitz}
\end{equation}
which demonstrates that $\Vel^\eps_{t,\omega}$ is Lipschitz continuous, uniformly for almost every $(t,\omega)\in[0,T]\times\Omega$.
  
\subsection{A priori bound}
\label{sec:a_priori_bound}

We now establish an \emph{a priori} bound on $\Vel^\eps_{t,\omega}[u(t)]$. Note first that, if $u\in\LL^2([0,T]\times\Omega;\HBC)$, then $\Vel^\eps_{t,\omega}[u(t,\omega)]\in\HBC$ for almost every $(t,\omega)\in[0,T]\times\Omega$. The fact that $-\nabla\Phi^\eps_{t,\omega}[u(t,\omega)] \in\partial\Psi^\eps_{t,\omega}\b[\Vel^\eps_{t,\omega}[u(t,\omega)]\b]$ entails that
\begin{equation*}
  \Big\<-\nabla\Phi^\eps_{t,\omega}[u(t,\omega)],w\Big\>_{\HBC}\leq \Psi^\eps_{t,\omega}\Big[\Vel^\eps_{t,\omega}[u(t,\omega)]+w\Big]-\Psi^\eps_{t,\omega}\Big[\Vel^\eps_{t,\omega}[u(t,\omega)]\Big] \quad\text{for any $w\in\HBC$}.
\end{equation*}
By taking $w = -\Vel^\eps_{t,\omega}[u(t,\omega)]$ and rearranging, we obtain
\begin{equation*}
  \Psi^\eps_{t,\omega}\Big[\Vel^\eps_{t,\omega}[u(t,\omega)]\Big] \leq \Psi^\eps_{t,\omega}[0]-\Big\<\nabla\Phi^\eps_{t,\omega} [u(t,\omega)],\Vel^\eps_{t,\omega}[u(t,\omega)]\Big\>_{\HBC}.
\end{equation*}
Using the growth condition~\eqref{eq:psi_lower_bound} to estimate the left-hand side from below, and assumptions~$(A3)$ and~$(\psi4)$ to estimate the right-hand side from above, we find that
\begin{align*}
  & c \, \Big\| D_x\dot{\overline{y}}(t) + D_x\Vel^\eps_{t,\omega}[u(t,\omega)] \Big\|_{\LL^2(\Dom)}^2
  \\
  & \leq C \, \Big(1+\left\|D_x\dot{\overline{y}}(t)\right\|_{\LL^2(\Dom)}^2\Big) + \overline{A} \, \Big\| D_x\overline{y}(t)+D_xu(t,\omega)\Big\|_{\LL^2(\Dom)} \Big\| D_x\Vel^\eps_{t,\omega}[u(t,\omega)] \Big\|_{\LL^2(\Dom)}
  \\
  & \leq C \, \Big(1+\left\|D_x\dot{\overline{y}}(t)\right\|_{\LL^2(\Dom)}^2\Big)
  +\frac{\overline{A}^2}{c} \, \Big\| D_x\overline{y}(t)+D_xu(t,\omega)\Big\|^2_{\LL^2(\Dom)} + \frac{c}{4} \, \Big\|D_x\Vel^\eps_{t,\omega}[u(t,\omega)]\Big\|_{\LL^2(\Dom)}^2.
\end{align*}
Using Young's inequality, we write that
\begin{align*}
  & \frac{3c}{4} \, \Big\|D_x\Vel^\eps_{t,\omega}[u(t,\omega)]\Big\|^2_{\LL^2(\Dom)}
  \\
  & \leq
  c \, \Big\| D_x\dot{\overline{y}}(t) + D_x\Vel^\eps_{t,\omega}[u(t,\omega)] \Big\|_{\LL^2(\Dom)}^2 + 3c \, \Big\| D_x\dot{\overline{y}}(t) \Big\|_{\LL^2(\Dom)}^2
  \\
  & \leq
  C \Big[1 \! + \! \left\|D_x\dot{\overline{y}}(t)\right\|_{\LL^2(\Dom)}^2\Big]
  +\frac{2\overline{A}^2}{c} \! \left[ \left\| D_x\overline{y}(t) \right\|^2_{\LL^2(\Dom)} \! + \! \left\| D_xu(t,\omega)\right\|^2_{\LL^2(\Dom)}\right] \! + \frac{c}{4} \Big\|D_x\Vel^\eps_{t,\omega}[u(t,\omega)]\Big\|_{\LL^2(\Dom)}^2 \! + 3c \Big\| D_x\dot{\overline{y}}(t) \Big\|_{\LL^2(\Dom)}^2.
\end{align*}
Upon rearranging, we obtain
\begin{equation*}
\|\Vel^\eps_{t,\omega}[u(t,\omega)]\b\|_{\HBC}^2
\leq
C_0 \B( 1 + \left\| D_x\dot{\overline{y}}(t) \right\|_{\LL^2(\Dom)}^2 + \left\| D_x\overline{y}(t) \right\|_{\LL^2(\Dom)}^2 + \|u(t,\omega)\|_{\HBC}^2 \B)
\end{equation*}
for some deterministic constant $C_0$ independent of $\eps$ and $t$. We hence get that, for almost every $(t,\omega)\in[0,T]\times\Omega$,
\begin{equation}
  \b\|\Vel^\eps_{t,\omega}[u(t,\omega)]\b\|_{\HBC}
  \leq 
  C_0 \B( 1 + \left\| D_x\dot{\overline{y}}(t) \right\|_{\LL^2(\Dom)} + \left\|D_x\overline{y}(t) \right\|_{\LL^2(\Dom)} + \|u(t,\omega)\|_{\HBC} \B).
  \label{eq:apriori_pointwise}
\end{equation}
Since $C_0$ is deterministic and independent of $t$, we can integrate the above bound in $t \in [0,\tau]$ and over $\omega\in\Omega$:
\begin{equation}
\int_0^\tau\!\!\int_\Omega\b\| \Vel^\eps_{t,\omega}[u(t)]\b\|^2_{\HBC}\dPr(\omega)\dt \leq C_0\B( \tau+\|D_x\overline{y}\|^2_{\HH^1([0,\tau];\LL^2(\Dom))}+\|u\|^2_{\LL^2([0,\tau]\times\Omega;\HBC)} \B) \qquad \text{for any }\tau\in[0,T].
  \label{eq:apriori_integrated}
\end{equation}
It follows that, if $u\in\LL^2\b([0,T]\times\Omega;\HBC\b)$, then the mapping $(t,\omega)\mapsto\Vel^\eps_{t,\omega}[u(t,\omega)]$ is well--defined, and also lies in $\LL^2\b([0,T]\times\Omega;\HBC\b)$.

\subsection{Existence of solutions for $\eps>0$}
\label{sec:existence_eps}
  
Define now the operator $\mathcal{T}^\eps:\LL^2([0,\tau]\times\Omega;\HBC)\to\LL^2([0,\tau]\times\Omega;\HBC)$ for some $0<\tau \leq T$ (which will be fixed later) by
\begin{equation*}
  \forall t \in [0,\tau], \qquad \mathcal{T}^\eps[u](t,\omega) = u_0(\omega) +\int_0^t \Vel^\eps_{s,\omega}[u(s,\omega)]\ds,
\end{equation*}
where $u_0 \in \LL^2(\Omega;\HBC)$ is given. This operator is well--defined in view of the arguments leading up to~\eqref{eq:apriori_integrated}. Moreover, we infer from~\eqref{eq:Vel_Lipschitz} that, for any $u$ and $v$ in $\LL^2([0,\tau]\times\Omega;\HBC)$,
\begin{align*}
  \B\|\mathcal{T}^\eps[u]-\mathcal{T}^\eps[v] \B\|^2_{\LL^2([0,\tau]\times\Omega;\HBC)}
  &=
  \int_0^\tau\!\!\int_\Omega\bg\| \int_0^t \left( \Vel^\eps_{s,\omega}[u(s,\omega)]-\Vel^\eps_{s,\omega}[v(s,\omega)] \right) \ds \bg\|_{\HBC}^2\dPr(\omega)\dt
  \\
  &\leq \int_0^\tau\!\!\int_\Omega t \int_0^t \B\| \Vel^\eps_{s,\omega}[u(s,\omega)]-\Vel^\eps_{s,\omega}[v(s,\omega)] \B\|^2_{\HBC}\ds\dPr(\omega)\dt
  \\
  &\leq \frac{\overline{A}^2\tau^2}{4c^2} \, \|u-v\|^2_{\LL^2([0,\tau]\times\Omega;\HBC)}.
\end{align*}
The map $\mathcal{T}^\eps$ is thus a contraction mapping on $\LL^2([0,\tau]\times\Omega;\HBC)$ whenever $\tau<2c/\overline{A}$, and so has a unique fixed point by the Banach Fixed Point Theorem, which we denote $u^\eps \in \LL^2([0,\tau]\times\Omega;\HBC)$. \blue{Moreover, applying the Lemma proved in~III.11.16 of~\cite{DunfordSchwartzI},} it is straightforward to check that this fixed point \blue{has a weak time derivative $\dot{u}^\eps$ in $\LL^2([0,\tau]\times\Omega;\HBC)$} by applying~\eqref{eq:apriori_integrated}. Choosing $u_0 \equiv 0$, we see that this fixed point satisfies the time-evolution equation in~\eqref{eq:EvolutionProblem} for almost every $t\in[0,\tau]$. \blue{Moreover, applying Tonelli's theorem to deduce that $u^\eps:[0,T]\to\LL^2(\Omega;\HBC)$ is a measurable map when we choose a representative such that
\begin{equation*}
  u^\eps(t) = \int_0^t\dot{u}^\eps(s)\ds \in\LL^2(\Omega;\HBC),
\end{equation*}
we deduce that we may take $u^\eps\in\HH^1\b([0,T];\LL^2(\Omega;\HBC)\b)$.} Recalling that functions in $\HH^1\b([0,T];\LL^2(\Omega;\HBC)\b)$ have unique representatives in $\CC\b([0,T];\LL^2(\Omega;\HBC)\b)$, we note that this representative satisfies $u^\eps(0)=u_0=0$. We thus have built a solution to~\eqref{eq:EvolutionProblem} on $[0,\tau]\times\Omega$ for any $\tau<2c/\overline{A}$. Since the argument given above is independent of the initial condition $u_0$, we may apply the same argument iteratively to show that a solution to~\eqref{eq:EvolutionProblem} exists on $[0,T]\times\Omega$. The uniqueness of that solution is a consequence of~\eqref{eq:Vel_Lipschitz}. This therefore completes the proof of Theorem~\ref{th:eps_existence}.
  
\subsection{Boundedness in $\HH^1([0,T];\LL^2(\Omega;\HBC))$}
\label{sec:boundedness}

We now check that the mapping $u^\eps$ has properties sufficient for us to pursue our subsequent analysis. As noted above, since $u^\eps\in\HH^1([0,T];\LL^2(\Omega;\HBC))$, there is a well--defined representative of $u^\eps$ in $\CC([0,T];\LL^2(\Omega;\HBC))$, which satisfies $\dps u^\eps(\tau,\omega) = \int_0^\tau \dot{u}^\eps(t,\omega)\dt$ since $u^\eps(0,\omega)=0$. Since $\dot{u}^\eps(t,\omega)=\Vel^\eps_{t,\omega}[u^\eps(t,\omega)]$ for almost every $(t,\omega)\in[0,T]\times\Omega$, we infer from~\eqref{eq:apriori_pointwise} that
\begin{align*}
  \|u^\eps(\tau,\omega)\|_{\HBC}
  &=
  \bg\| \int_0^\tau\dot{u}^\eps(t,\omega)\dt\bg\|_{\HBC}
  \\
  &\leq \int_0^\tau\|\dot{u}^\eps(t,\omega)\|_{\HBC}\dt
  \\
  &\leq C_0 \int_0^\tau \Big( 1+\left\| D_x\dot{\overline{y}}(t) \right\|_{\LL^2(\Dom)} + \left\|D_x\overline{y}(t)\right\|_{\LL^2(\Dom)} + \|u^\eps(t,\omega) \|_{\HBC} \Big) \dt.
\end{align*}
Upon applying Gr\"onwall's inequality, we get, for any $\tau \in [0,T]$, that
\begin{equation}
  \label{eq:redite_pre}
  \|u^\eps(\tau,\omega)\|_{\HBC}\leq C_0\tau\mathrm{e}^{C_0\tau} + C_0\mathrm{e}^{C_0\tau}\int_0^\tau \b( \left\|D_x\dot{\overline{y}}(t)\right\|_{\LL^2(\Dom)}+\left\|D_x\overline{y}(t)\right\|_{\LL^2(\Dom)} \b) \dt\leq C_1(T),
\end{equation}
where $C_1$ is independent of $\tau$, $\eps$ and $\omega$ (but depends on $T$). Squaring the bound~\eqref{eq:apriori_pointwise} and integrating in \blue{$\omega$ and }$t$, we obtain
\begin{equation}
  \label{eq:redite}
  \blue{\int_0^T\!\!\int_\Omega\|\dot{u}^\eps(t,\omega)\|^2_{\HBC}\dPr(\omega)\dt \leq C_2,}
\end{equation}
where $C_2$ is independent of $\eps$. Collecting~\eqref{eq:redite_pre} and~\eqref{eq:redite}, we have thus established that the family of solutions $u^\eps$ is uniformly bounded in $\HH^1([0,T];\LL^2(\Omega;\HBC))$: there exists $C$ independent of $\eps$ such that
\begin{equation} \label{eq:bound_u_eps}
  \| u^\eps \|_{\HH^1([0,T];\LL^2(\Omega;\HBC))} \leq C.
\end{equation}


\section{Obtaining a homogenized limit}
\label{sec:limit}

The following theorem now gives a precise statement of our main result, Theorem~\ref{th:main}, namely the identification of the homogenized limit of~\eqref{eq:EvolutionProblem} as $\eps\to0$. We use the notion of stochastic two--scale convergence introduced in Section~\ref{sec:stoc2scale}.

\begin{theorem}[Rigorous statement of Theorem~\ref{th:main}]
Assume that $\overline{y}$ belongs to $\HH^1\b([0,T];\HH^1(\Dom)\b)$, and let $u^\eps\in\HH^1\b([0,T];\LL^2(\Omega;\HBC)\b)$ be the sequence of solutions to the evolution problem~\eqref{eq:EvolutionProblem} with the initial condition $u^\eps(0) = 0$. Then, as $\eps\to0$, $u^\eps$ two--scale converges to $u^\star$, and $D_xu^\eps$ two--scale converges to $D_x u^\star+\theta$, where $u^\star\in\HH^1([0,T];\HBC)$ and $\theta\in\HH^1\b([0,T];\LL^2(\Dom;\L2Pot(\Omega))\b)$ are the unique solutions to the system of inclusions
\begin{equation}
  -\div_x\bg[\int_\Omega \partial_\xi\psi_0\b(\omega,D_x\dot{\overline{y}}+D_x\dot{u}^\star+\dot{\theta}\b)+ D_\xi W_0\b(\omega,D_x\overline{y}+D_xu^\star+\theta\b)\dPr(\omega)\bg]\ni0,
  \tag{\ref{eq:homogenised_eqn}}
\end{equation}
\begin{equation}
  -\div_\omega \b[\partial_\xi\psi_0(\omega,D_x\dot{\overline{y}}+D_x\dot{u}^\star +\dot{\theta})+D_\xi W_0(\omega,D_x\overline{y}+D_xu^\star+\theta)\b]\ni 0,
  \tag{\ref{eq:corrector_eqn}}
\end{equation}
with the initial conditions $u^\star(0)=0$ and $\theta(0)=0$. 

The inclusions~\eqref{eq:homogenised_eqn} and~\eqref{eq:corrector_eqn} respectively hold in $\HBCPrime$ and $\LL^2\b(\Dom;(\L2Pot(\Omega))'\b)$ for almost every $t\in[0,T]$. Note that $u^\star$ is deterministic and that $\Ex[\theta]=0$.
\end{theorem}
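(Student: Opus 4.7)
The plan is to apply the standard four-step strategy for homogenization of doubly nonlinear evolution problems: extract two-scale convergent subsequences of the displacement and the associated stresses; read off the macroscopic and microscopic force balances from the orthogonality properties built into the function spaces; close the nonlinear inclusion by a Minty-type argument anchored in a limit energy identity; and finally establish uniqueness to upgrade subsequential convergence to convergence of the full sequence.

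\textbf{Compactness and limit equations.} The evolution equation~\eqref{eq:EvolutionProblem} and the characterisation of $\partial\Psi^\eps_{t,\omega}$ furnish a measurable stress $\sigma^\eps(t,\omega,x) \in \partial_\xi\psi(\omega,x/\eps,D_x\dot{\overline{y}}+D_x\dot u^\eps)$ such that the total stress $\Sigma^\eps := \sigma^\eps + A(\omega,\cdot/\eps)D_x(\overline{y}+u^\eps)$ satisfies $\Sigma^\eps(t,\omega)\in\L2Pot(\Dom)^\perp$ for almost every $(t,\omega)$. By~\eqref{eq:subdiff_bnd}, $\sigma^\eps$ is bounded in $\LL^2$, hence so is $\Sigma^\eps$. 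Combining this with~\eqref{eq:bound_u_eps}, Lemma~\ref{th:2scale_H1} applied to $u^\eps$ and Lemma~\ref{th:2scale_divfree_L2} applied to $\Sigma^\eps$ deliver a subsequence along which $u^\eps \to u^\star$ (deterministic, in $\HH^1([0,T];\HBC)$), $D_xu^\eps \to D_xu^\star+\theta$ with $\theta\in\HH^1([0,T];\LL^2(\Dom;\L2Pot(\Omega)))$ and $\theta(0)=0$, and $\Sigma^\eps \to \Sigma^\star+\zeta$ with $\Sigma^\star(t)\in\L2Pot(\Dom)^\perp$, $\zeta(t,\cdot,x)\in\L2Pot(\Omega)^\perp$ and $\Ex[\zeta]=0$. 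Using stationarity of $A$ as an admissible multiplier, $A(\omega,\cdot/\eps)D_x(\overline{y}+u^\eps)$ two-scale converges to $A_0(\omega)(D_x\overline{y}+D_xu^\star+\theta)$, hence $\sigma^\eps \to \sigma^\star := \Sigma^\star+\zeta-A_0(\omega)(D_x\overline{y}+D_xu^\star+\theta)$. Taking expectations yields~\eqref{eq:homogenised_eqn} because $\Ex[\sigma^\star+A_0(\cdots)] = \Sigma^\star \in \L2Pot(\Dom)^\perp$, and~\eqref{eq:corrector_eqn} follows because $\sigma^\star+A_0(\cdots) = \Sigma^\star+\zeta$ with $\Sigma^\star$ deterministic (so $D_\omega\Sigma^\star = 0$) and $\zeta \in \L2Pot(\Omega)^\perp$.

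\textbf{Main obstacle: identifying $\sigma^\star$ as a subgradient.} The hardest step, in my view, is showing that $\sigma^\star \in \partial_\xi\psi_0(\omega,\dot z^\star)$ with $\dot z^\star := D_x\dot{\overline{y}}+D_x\dot u^\star+\dot\theta$. I would attack it by a Minty argument built from a limit energy identity. Testing~\eqref{eq:EvolutionProblem} against $\dot u^\eps$ and using the chain rule on $\Phi^\eps_\omega[\overline{y}+u^\eps]$ gives
\begin{equation*}
  \int_0^T\!\!\Ex\!\left[\int_\Dom \sigma^\eps \cdot (D_x\dot{\overline{y}}+D_x\dot u^\eps) \dx\right]\dt = \Ex\!\left[\Phi^\eps_\omega[\overline{y}(0)] - \Phi^\eps_\omega[\overline{y}(T)+u^\eps(T)]\right] + \int_0^T\!\!\Ex\!\left[\int_\Dom \Sigma^\eps \cdot D_x\dot{\overline{y}} \dx\right]\dt.
\end{equation*}
The initial energy converges exactly by invariance of $\Pr$ under $T$; the $\Sigma^\eps$ integral passes to the limit by two-scale convergence against the admissible test $D_x\dot{\overline{y}}$; and the final energy is handled by two-scale lower semicontinuity of $\Phi$ (using assertion~(\ref{iv}) of Lemma~\ref{th:2scale_H1} and convexity of $W_0$). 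Comparing with the chain-rule identity for the homogenized energy $\Ex[\int W_0(\omega,D_x(\overline{y}+u^\star)+\theta)\dx]$ and exploiting the orthogonalities $\Sigma^\star\perp\L2Pot(\Dom)$, $\zeta\perp\L2Pot(\Omega)$ together with $\Ex[\zeta]=\Ex[\dot\theta]=0$ (Remarks~\ref{rem:evident} and~\ref{rem:evident2}), one obtains
\begin{equation*}
  \limsup_{\eps\to 0} \int_0^T\!\!\Ex\!\left[\int_\Dom \sigma^\eps \cdot \dot z^\eps \dx\right]\dt \leq \int_0^T\!\!\int_\Omega\!\int_\Dom \sigma^\star \cdot \dot z^\star \dx\dPr\dt,
\end{equation*}
where $\dot z^\eps := D_x\dot{\overline{y}}+D_x\dot u^\eps$. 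Testing the pointwise convexity inequality $\psi(\omega,x/\eps,\widetilde v_\eps) - \psi(\omega,x/\eps,\dot z^\eps) \geq \sigma^\eps \cdot (\widetilde v_\eps - \dot z^\eps)$ for admissible $\widetilde v_\eps(t,\omega,x):=\widetilde v(t,T(x/\eps)\omega,x)$, and passing to the limit using the exact convergence of $\int\psi(\omega,\cdot/\eps,\widetilde v_\eps)$ (again by invariance of $\Pr$), the two-scale lower semicontinuity $\liminf \int\psi(\omega,\cdot/\eps,\dot z^\eps)\geq\int\psi_0(\omega,\dot z^\star)$, two-scale convergence of $\sigma^\eps$ against admissible $\widetilde v_\eps$, and the above limsup bound, yields the integrated Minty inequality
\begin{equation*}
  \int_0^T\!\!\int_\Omega\!\int_\Dom \left[\psi_0(\omega,\widetilde v) - \psi_0(\omega,\dot z^\star) - \sigma^\star \cdot (\widetilde v - \dot z^\star)\right] \dx\dPr\dt \geq 0
\end{equation*}
for every admissible $\widetilde v$. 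Density of admissible functions in $\LL^2([0,T]\times\Omega\times\Dom)^d$ and a standard Lebesgue-differentiation argument then give the pointwise inclusion $\sigma^\star \in \partial_\xi\psi_0(\omega,\dot z^\star)$.

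\textbf{Uniqueness and full convergence.} Given two solutions $(u^\star_i,\theta_i)$ with selections $\sigma^\star_i$, the difference $(w,\phi):=(u^\star_1-u^\star_2,\theta_1-\theta_2)$ satisfies $\sigma^\star_1-\sigma^\star_2+A_0(D_xw+\phi)\in\L2Pot(\Omega)^\perp$ and $\Ex[\sigma^\star_1-\sigma^\star_2+A_0(D_xw+\phi)]\in\L2Pot(\Dom)^\perp$. Testing against $D_x\dot w+\dot\phi$, using the strong monotonicity of $\partial_\xi\psi_0$ provided by $(\psi3)$ together with the quadratic chain rule for the form associated with $A_0$, produces
\begin{equation*}
  2c\int_0^T\!\!\int_\Omega\!\int_\Dom |D_x\dot w+\dot\phi|^2 \dx\dPr\dt + \frac12\int_\Omega\!\int_\Dom A_0(\omega)|D_xw(T)+\phi(T)|^2 \dx\dPr \leq 0,
\end{equation*}
which forces $w\equiv 0$ and $\phi\equiv 0$ (using also the Poincar\'e inequality~\eqref{eq:poincare} for $w\in\HBC$). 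Any subsequence of $\{u^\eps\}$ thus admits a further sub-subsequence two-scale converging to the same $(u^\star,\theta)$, so the full sequence converges as $\eps\to 0$.
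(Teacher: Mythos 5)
Your proposal is correct and takes a genuinely different route from the paper's proof on two key points. First, you bypass the dual dissipation functional $(\Psi^\eps_{t,\omega})^*$ entirely: instead of reformulating the evolution as the scalar De~Giorgi-type energy identity~\eqref{eq:zero_energy} and computing the Legendre--Fenchel transform via the corrector-type minimisation in Lemma~\ref{th:dual_char} (applied to the regularised density $\psi_\eta$), you extract a measurable subgradient $\sigma^\eps\in\partial_\xi\psi(\omega,\cdot/\eps,D_x\dot{\overline{y}}+D_x\dot{u}^\eps)$ directly from~\eqref{eq:EvolutionProblem}, form the physical total stress $\Sigma^\eps=\sigma^\eps+A^\eps D_x(\overline{y}+u^\eps)$ which lives in $\L2Pot(\Dom)^\perp$, and apply Lemma~\ref{th:2scale_divfree_L2} to $\Sigma^\eps$ rather than to the auxiliary stress $\sigma^\eps_\eta$ of Lemma~\ref{th:dual_char}. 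Then you close the nonlinearity with a Minty inequality anchored in a limit energy balance, rather than the Fenchel--Young equality used in Step~5 of Section~\ref{sec:conv_subseq}. Second, for uniqueness you run a strong-monotonicity/Gr\"onwall argument on the difference of two solutions, whereas the paper re-runs the Cauchy--Lipschitz fixed-point machinery (Lemma~\ref{th:0_existence}) used at the $\eps>0$ level. What the paper's route buys is a self-contained treatment of two technical points that your plan treats as given: (i) a \emph{jointly measurable} selection $\sigma^\eps(t,\omega,x)$ of the a.e.\ multivalued subdifferential --- needed before $\Sigma^\eps$ can be declared an element of $\LL^2([0,T]\times\Omega;\L2Pot(\Dom)^\perp)$; the paper sidesteps this by never choosing $\sigma^\eps$ and instead building the measurable stress $\sigma^\eps_\eta$ explicitly as the Euler--Lagrange multiplier of a smooth convex minimisation; and (ii) the two-scale lower semicontinuity $\liminf\int\psi(\omega,\cdot/\eps,\dot{z}^\eps)\geq\int\psi_0(\omega,\dot{z}^\star)$, which is nontrivial because $\psi$ is merely convex (not $\CC^1$), and which the paper establishes precisely through the Moreau-envelope regularisation $\psi_\eta$ of Lemma~\ref{th:convex_approx} in Step~2. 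Since you would still need some version of that regularisation to justify the lower semicontinuity, the apparent gain from avoiding $(\Psi^\eps_{t,\omega})^*$ is real but partial. What your route buys is a shorter and more transparent passage from the $\eps$-level stress balance to the two-scale limit system, since $\Sigma^\eps$, $\Sigma^\star$, $\zeta$ carry a direct physical interpretation that is obscured by the Legendre-transform formulation. Your energy-balance limsup identity and the cancellations via Remarks~\ref{rem:evident}--\ref{rem:evident2} and the orthogonality of $\L2Pot(\Dom)^\perp$, $\L2Pot(\Omega)^\perp$ check out exactly, as does the uniqueness computation using~\eqref{eq:decoupling}.
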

\blue{As pointed out below Theorem~\ref{th:main}, Equations~\eqref{eq:homogenised_eqn} and~\eqref{eq:corrector_eqn} should be understood as follows: there exists some vector valued function $G^\star \in \LL^2\b( [0,T] \times \Omega \times \Dom \b)^d$, satisfying $G^\star(t,\omega,x) \in \partial_\xi\psi_0\left(\omega,D_x\dot{\overline{y}}+D_x\dot{u}^\star +\dot\theta\right)$ for almost any $(t,\omega,x) \in [0,T] \times \Omega \times \Dom$, such that~\eqref{eq:homogenised_eqn_bis} and~\eqref{eq:corrector_eqn_bis} hold.}

\medskip

We have stated in Theorem~\ref{th:eps_existence} the well-posedness of~\eqref{eq:EvolutionProblem} with the initial condition $u^\eps(0)=u_0 \equiv 0$, but the proof shows that the same result holds for any $u_0 \in \LL^2(\Omega;\HBC)$. Similarly, the proof we give for Theorem~\ref{th:main} can be adapted to show that the result holds also when $u^\eps(0)=u_0 \in \LL^2(\Omega;\HBC)$ for any $\eps$ (note that the initial condition is independent from $\eps$), with the initial conditions for the homogenized problem becoming $u^\star(0) = \Ex[u_0]$ and \red{$\theta(0) = 0$. The details of this adaptation are left to the reader.}

\medskip
  
The proof of Theorem~\ref{th:main} is given over the remainder of this section, and proceeds by compactness. The main idea is relatively standard: in view of the uniform \emph{a priori} bounds~\eqref{eq:bound_u_eps} on $u^\eps$, we can extract a two--scale convergent subsequence using Lemma~\ref{th:2scale_H1}. We next identify an equation satisfied by the limit. We eventually demonstrate that that homogenized equation has a unique solution. The whole sequence hence two--scale converges to the derived limit.

\begin{remark}
Theorem~\ref{th:main} implies that
\begin{equation}
  \label{eq:vendredi2}
\lim_{\eps \to 0} \int_0^T\!\!\int_\Dom \left| u^\star - \Ex(u^\eps) \right|^2 = 0.
\end{equation}
Indeed, we know that $u^\eps$ two--scale converges to $u^\star$, which is independent of $\omega$. Taking test functions in Definition~\ref{def:two-scale} that are independent of $\omega$, we obtain that, for any $\psi\in \LL^2\b([0,T];\CC\b(\overline{\Dom}\b)\b)$, we have
\begin{equation}
  \label{eq:vendredi}
  \lim_{\eps\to 0} \int_0^T\!\!\int_\Dom \Ex(u^\eps(t,\cdot,x)) \, \psi \left(t,x\right) \dx \dt
  =
  \int_0^T\!\!\int_\Dom u^\star(t,x) \, \psi(t,x) \dx \dt.
\end{equation}
In addition, we have shown in Section~\ref{sec:boundedness} that, almost surely, $u^\eps(\omega)$ is bounded in $\HH^1([0,T];\HBC)$ by a constant $C$ independent of $\eps$ and $\omega$ (see~\eqref{eq:redite_pre} and~\eqref{eq:redite}). We thus have
$$
\int_0^T\!\!\int_\Dom \B( \Ex(u^\eps) \B)^2 \leq \int_0^T\!\!\int_\Dom \Ex\b(|u^\eps|^2\b) \leq C 
$$
and likewise for $\Ex(D_x u^\eps)$. We hence have that $\overline{u}^\eps = \Ex(u^\eps)$ is bounded in $\HH^1([0,T];\HBC)$. There hence exists $\overline{u}^\star \in \HH^1([0,T];\HBC)$ such that, up to the extraction of a subsequence, $\overline{u}^\eps$ converges to $\overline{u}^\star$, weakly in $\HH^1([0,T];\HBC)$ and strongly in $\LL^2([0,T];\LL^2(\Dom))$. Collecting this result with~\eqref{eq:vendredi}, we get~\eqref{eq:vendredi2}. We furthermore obtain that $\Ex(u^\eps)$ weakly converges to $u^\star$ in $\HH^1([0,T];\HBC)$.
\end{remark}

\subsection{Convergence of subsequences}
\label{sec:conv_subseq}

The \emph{a priori} bound~\eqref{eq:bound_u_eps} on $u^\eps$ allows us to apply Lemma~\ref{th:2scale_H1}. There hence exist some $u^\star \in \HH^1([0,T];\HBC)$, some $\theta\in\HH^1\b([0,T];\LL^2\b(\Dom;\L2Pot(\Omega)\b)\b)$ and a subsequence such that, along that subsequence, $u^\eps$ and $\dot{u}^\eps$ respectively two--scale converge to $u^\star$ and $\dot{u}^\star$, and $D_x u^\eps$ and $D_x \dot{u}^\eps$ respectively two--scale converge to $D_xu^\star+\theta$ and $D_x\dot{u}^\star+\dot{\theta}$. Using these properties, we now demonstrate that $u^\star$ and $\theta$ satisfy a system of nonlinear evolutionary inclusions, namely~\eqref{eq:corrector_eqn} and~\eqref{eq:homogenised_eqn}. The proof falls in 5 steps.

\medskip

\noindent
\emph{Step 1. Reformulation of~\eqref{eq:EvolutionProblem}.}
We claim that the statement that~\eqref{eq:EvolutionProblem} holds up to a set of $\Leb^1$--negligible times $t\in[0,T]$ for $\Pr$--almost every $\omega$ is equivalent to the statement that
\begin{equation}
  0=\int_0^T\!\!\int_\Omega \Big( \Psi^\eps_{t,\omega}[\dot{u}^\eps(t,\omega)] + (\Psi^{\eps}_{t,\omega})^*\b[-\nabla\Phi^\eps_{t,\omega}[u^\eps(t,\omega)]\b]
  -\b\<-\nabla\Phi^\eps_{t,\omega}[u^\eps(t,\omega)],\dot{u}^\eps(t,\omega)\b\>_{\HBC} \Big) \dPr(\omega)\dt ,
  \label{eq:zero_energy}
\end{equation}
where $\b(\Psi^\eps_{t,\omega})^*$ denotes the Legendre--Fenchel transform of $\Psi^\eps_{t,\omega}$ with respect to the duality product $\<\cdot,\cdot\>_{\HBC}$. This equivalence is discussed in greater detail in~\cite{MielNotes,Roubicek}. We briefly recall here the main idea, which was also discussed in Section~\ref{sec:InitialExample} (see~\eqref{eq:LFEquivalence}). For any \blue{proper,} convex and lower semicontinuous function $F:X\to\R$ defined on a Banach space $X$, define the Legendre--Fenchel transform $F^*:X'\to\R\cup\{+\infty\}$ by
\begin{equation*}
  F^*(\sigma):=\sup_{\xi\in X}\b\{\<\sigma,\xi\>_X-F(\xi)\b\}.  
\end{equation*}
Using the fact that $F$ is convex, it may be deduced that $F^*$ is also convex, and additionally $F(\xi)+F^*(\sigma)\geq \<\sigma,\xi\>_X$ for any $\xi\in X$ and $\sigma\in X'$. Moreover, the statements
\begin{equation} \label{eq:convex_duality_inclusion}
(1)\quad F(\xi)+F^*(\sigma)=\<\sigma,\xi\>_X, \qquad (2) \quad \sigma\in \partial_\xi F(\xi) \quad \text{and} \qquad (3) \quad \xi\in\partial_\sigma F^*(\sigma)
\end{equation}
are equivalent.

If~\eqref{eq:EvolutionProblem} holds for almost every $(t,\omega)\in[0,T]\times\Omega$, then, using~\eqref{eq:convex_duality_inclusion} and integrating with respect to $t$ and $\omega$, we obtain~\eqref{eq:zero_energy}. Conversely, the integrand in~\eqref{eq:zero_energy} is always non-negative. The equation~\eqref{eq:zero_energy} thus implies that the integrand vanishes for almost every $(t,\omega)\in[0,T]\times\Omega$. The equivalence~\eqref{eq:convex_duality_inclusion} then implies that~\eqref{eq:EvolutionProblem} is satisfied for almost every $(t,\omega)\in[0,T]\times\Omega$. We have thus proved our claim.

\smallskip

In the sequel of the proof, we use the integral formulation~\eqref{eq:zero_energy} to pass to the limit $\eps \to 0$. 
  
\medskip

\noindent
\emph{Step 2. Passing to the limit in the first term of~\eqref{eq:zero_energy}.}
For any $\xi\in\CC^\infty_0\b([0,T];\CC^1_0(\Dom\cup\GNeu;\mathscr{D}^\infty(\Omega))\b)^d$, set $\dps \xi^\eps(t,\omega,x):= \xi\left(t,T\left(\frac{x}{\eps}\right)\omega,x\right)$. In view of the discussion below Definition~\ref{def:admissible}, the function $\xi$ is admissible (i.e. the function $\xi^\eps$ is measurable and square-integrable). We use Lemma~\ref{th:convex_approx} to introduce a measurable and $\CC^1$ approximation $\psi_\eta$ of $\psi$. Using property~\eqref{iii_sec5} of Lemma~\ref{th:convex_approx} and the fact that $\psi_\eta$ is convex and differentiable, we write
\begin{align}
  \psi\left(\omega,\frac{\cdot}{\eps},D_x\dot{\overline{y}}+D_x\dot{u}^\eps \right)
  &\geq
  \psi_\eta\left(\omega,\frac{\cdot}{\eps},D_x\dot{\overline{y}}+D_x\dot{u}^\eps \right)
  \nonumber
  \\
  &\geq
  \psi_\eta\left(\omega,\frac{\cdot}{\eps},D_x\dot{\overline{y}}+\xi^\eps \right)
  +
  D_\xi \psi_\eta\left(\omega,\frac{\cdot}{\eps},D_x\dot{\overline{y}} + \xi^\eps\right)\cdot [D_x\dot{u}^\eps-\xi^\eps]. \label{eq:maison2}
\end{align}
Consider the function $\alpha(t,\omega,x) = D_\xi \psi_{0,\eta}\left(\omega,D_x\dot{\overline{y}}(t)+ \xi(t,\omega,x)\right)$, where $\psi_{0,\eta}$ is defined in~\eqref{eq:StationaryReg}, and set
$$
\alpha^\eps(t,\omega,x)
:=
\alpha\left(t,T\left(\frac{x}{\eps}\right)\omega,x\right)
=
D_\xi \psi_\eta\left(\omega,\frac{x}{\eps},D_x\dot{\overline{y}} + \xi^\eps\right).
$$
Noting that $\psi_\eta$ is $\CC^1$ in its third argument, that $\overline{y}\in\HH^1([0,T];\HH^1(\Dom))$ and that $\xi$ and $\xi^\eps$ are measurable, we have that $\alpha$ and $\alpha^\eps$ are measurable.
Furthermore, using~\eqref{eq:bound_d_psi_eta} and the regularity of $\xi$, $\xi^\eps$ and $\overline{y}$, we see that $\alpha$ and $\alpha^\eps$ belong to $\LL^2([0,T]\times\Omega\times\Dom)$. The function $\alpha$ is hence an admissible test function. Likewise, introduce $\beta(t,\omega,x) = \psi_{0,\eta}\left(\omega,D_x\dot{\overline{y}}(t)+ \xi(t,\omega,x)\right)$, where we again recall that $\psi_{0,\eta}$ is defined in~\eqref{eq:StationaryReg}, and
$$
\beta^\eps(t,\omega,x)
:=
\beta\left(t,T\left(\frac{x}{\eps}\right)\omega,x\right)
=
\psi_\eta\left(\omega,\frac{x}{\eps},D_x\dot{\overline{y}} + \xi^\eps\right).
$$
Using the same arguments as above, we have that $\beta$ and $\beta^\eps$ are measurable. We next infer from property~\eqref{iii_sec5} of Lemma~\ref{th:convex_approx}, Assumption~$(\psi4)$ and the regularity of $\xi$, $\xi^\eps$ and $\overline{y}$ that $\beta$ and $\beta^\eps$ belong to $\LL^{\red{1}}([0,T]\times\Omega\times\Dom)$.

Integrating~\eqref{eq:maison2} and taking the liminf, we have that
\begin{multline}
  \liminf_{\eps\to0} \int_0^T\!\!\int_\Omega \Psi^\eps_{t,\omega}[\dot{u}^\eps_\omega(t)] \dPr(\omega)\dt
  \geq
  \liminf_{\eps\to0}\int_0^T\!\!\int_\Omega\int_\Dom \beta^\eps(t,\omega,x) \dx \dPr(\omega)\dt
  \\
  +
  \liminf_{\eps\to0} \int_0^T\!\!\int_\Omega \int_\Dom \alpha^\eps \cdot [D_x\dot{u}^\eps-\xi^\eps].
  \label{eq:maison3}
\end{multline}
We are now in position to pass to the limit $\eps \to 0$ in the right-hand side of~\eqref{eq:maison3}. Since the ergodic dynamical system $T$ preserves the measure, we have
$$
\int_0^T\!\!\int_\Omega \int_\Dom \! \beta^\eps\left(t,\omega,x\right)\dx\dPr(\omega)\dt
=
\int_0^T\!\!\int_\Dom \left[ \int_\Omega \! \beta\left(t,T\left(\frac{x}{\eps}\right)\omega,x\right)\dPr(\omega) \right] \! \dx\dt
=
\int_0^T\!\!\int_\Dom \left[ \int_\Omega \! \beta\left(t,\omega,x\right) \! \dPr(\omega) \right] \! \dx\dt
$$
and likewise for $\alpha^\eps \cdot \xi^\eps$. Using that $\alpha$ is admissible and passing to the two--scale limit in the remainder term of the right-hand side of~\eqref{eq:maison3}, we get
\begin{multline} \label{eq:maison4}
  \liminf_{\eps\to0}\int_0^T\!\!\int_\Omega\Psi^\eps_{t,\omega}[\dot{u}^\eps_\omega(t)]\dPr(\omega)\dt\\
  \geq 
  \int_0^T\!\!\int_\Omega\int_\Dom \left( \psi_{0,\eta}\left(\omega,D_x\dot{\overline{y}} +\xi\right)
  +D_\xi\psi_{0,\eta}\left(\omega,D_x\dot{\overline{y}}+\xi\right)\cdot \b[D_x\dot{u}^\star+\dot{\theta}-\xi\b] \right)\dx\dPr(\omega)\dt.
\end{multline}
Next, by density of $\CC^\infty_0([0,T];\CC^1_0(\Dom\cup\GNeu;\mathscr{D}^\infty(\Omega)))$ in $\LL^2([0,T]\times\Omega\times\Dom)$, we let $\xi\to D_x\dot{u}^\star+\dot{\theta}$ in $\LL^2([0,T]\times\Omega\times\Dom)^d$. Using the bound~\eqref{eq:bound_d_psi_eta} on $D_\xi\psi_{0,\eta}$, we deduce that
\begin{align*}
  \liminf_{\eps\to0}\int_0^T\!\!\int_\Omega\Psi^\eps_{t,\omega}[\dot{u}^\eps_\omega(t)]\dPr(\omega)\dt
  &\geq \int_0^T\!\!\int_\Omega\int_\Dom \psi_{0,\eta}\left(\omega,D_x\dot{\overline{y}}+D_x\dot{u}^\star+\dot{\theta}\right)\dx\dPr\dt.
\end{align*}
\blue{Now, using property~\eqref{iii_sec5} of $\psi_\eta$ proved in Lemma~\ref{th:convex_approx}, we deduce that
\begin{align*}
  \liminf_{\eps\to0}\int_0^T\!\!\int_\Omega\Psi^\eps_{t,\omega}[\dot{u}^\eps_\omega(t)]\dPr(\omega)\dt
  &\geq \int_0^T\!\!\int_\Omega\int_\Dom \psi_0\left(\omega,D_x\dot{\overline{y}}+D_x\dot{u}^\star+\dot{\theta} \right)\dx\dPr\dt\\
  &\qquad\qquad- 8\eta C^2\int_0^T\!\!\int_\Omega\int_\Dom \left( 1+|D_x\dot{\overline{y}}+D_x\dot{u}^\star+\dot{\theta}|^2 \right) \dx\dPr\dt.
\end{align*}
Taking the limit $\eta\to0$, the latter term on the right--hand side vanishes, and} we obtain
\begin{equation}
  \liminf_{\eps\to0}\int_0^T\!\!\int_\Omega\Psi^\eps_{t,\omega}[\dot{u}^\eps_\omega(t)]\dPr(\omega)\dt
  \geq
  \int_0^T\!\!\int_\Omega\int_\Dom\psi_0\left(\omega,D_x\dot{\overline{y}}+D_x\dot{u}^\star+\dot{\theta}\right)\dx\dPr\dt.
  \label{eq:liminf_term1}
\end{equation}


\noindent
\emph{Step 3. Passing to the limit in the second term of~\eqref{eq:zero_energy}.}
\blue{Consider the regularisation $\psi_\eta$ of $\psi$ introduced in Lemma~\ref{th:convex_approx}. In view of~\eqref{eq:utile}, we have, for any $v \in \HBC$, that
\begin{multline*}
\Psi^{\eps,\eta}_{t,\omega}[v]
:=
\int_\Dom \psi_\eta\left(\omega,\frac{x}{\eps},D_x\dot{\overline{y}}(t)+D_xv\right)\dx
\geq
m_\eta \int_\Dom \psi\left(\omega,\frac{x}{\eps},D_x\dot{\overline{y}}(t)+D_xv\right)\dx - 8 \eta C^2 \Leb^d(\Dom)
\\
=
m_\eta \Psi^\eps_{t,\omega}[v] - 8 \eta C^2 \Leb^d(\Dom),
\end{multline*}
where $\Leb^d(\Dom)$ denotes the volume of the domain $\Dom$. We thus deduce that, for any $f \in \HBCPrime$, we have
\begin{equation} \label{eq:samedi}
(\Psi^\eps_{t,\omega})^*[f] \geq \frac{1}{m_\eta} (\Psi^{\eps,\eta}_{t,\omega})^*[m_\eta \, f] - \eta \frac{8 C^2 \Leb^d(\Dom)}{m_\eta}.
\end{equation}
For any $u \in \HBC$, we have
$$  
\Phi^\eps_{t,\omega}[u]
=
\frac{1}{2} \int_\Dom D_x(\overline{y}(t)+u)\cdot A^\eps D_x(\overline{y}(t)+u)\dx,
$$
where we have denoted $\dps A^\eps = A\left(\omega,\frac{x}{\eps}\right)$. For any $v \in \HBC$, we hence write
\begin{equation} \label{eq:nabla_Phi_t}
\<\nabla \Phi^\eps_{t,\omega}[u^\eps_\omega(t)],v\>_{\HH^1(\Dom)} = \int_\Dom A^\eps D_x(\overline{y}(t)+u^\eps_\omega(t))\cdot D_xv\dx
\end{equation}
and thus $\dps -\nabla\Phi^\eps_{t,\omega}[u^\eps(t)] = \div_x \left( A^\eps D_x(\overline{y}(t)+u^\eps_\omega(t)) \right)$. Combining~\eqref{eq:samedi} and Lemma~\ref{th:dual_char} therefore yield that
\begin{multline} \label{eq:maison}
  \eta \frac{8 C^2 T \Leb^d(\Dom)}{m_\eta} + \int_0^T\!\!\int_\Omega(\Psi^\eps_{t,\omega})^*\left[-\nabla\Phi^\eps_{t,\omega}[u^\eps_\omega(t)]\right]\dPr(\omega)\dt
  \\
  \geq \frac{1}{m_\eta} \int_0^T\!\!\int_\Omega\int_\Dom (\psi_\eta)^*\left(\omega,\frac{x}{\eps},\sigma^\eps_\eta-m_\eta A^\eps D_x(\overline{y}+u^\eps_\omega)\right)-\Big( \sigma^\eps_\eta - m_\eta A^\eps D_x\left(\overline{y}+u^\eps_\omega\right)\Big) \cdot D_x\dot{\overline{y}} \, \dx\dPr(\omega)\dt
\end{multline}
for some $\sigma^\eps_\eta \in \LL^2([0,T]\times\Omega\times\Dom)^d$ satisfying $\sigma^\eps_\eta(t,\omega) \in \L2Pot(\Dom)^\perp$ for almost every $(t,\omega)$. By combining the bound~\eqref{eq:bound_sigma} on $\sigma^\eps_\eta$ with the \emph{a priori} bounds on $u^\eps$ obtained in Section~\ref{sec:boundedness}, we note that $\sigma^\eps_\eta$ is uniformly bounded in $\LL^2([0,T]\times\Omega\times\Dom)^d$. Considering $\eta$ of the form $\eta = 1/n$ for $n \in \N^\star$, applying Lemma~\ref{th:2scale_divfree_L2} for any fixed $\eta$ of that form and using a diagonalization argument, we infer that there exists a subsequence $\{ \sigma^{\eps'}_\eta \}$ (with $\eps'$ independent of $\eta = 1/n$) along which $\sigma^{\eps'}_\eta$ two--scale converges to $\sigma^\star_\eta+\zeta_\eta$ when $\eps' \to 0$, where $\sigma^\star_\eta \in \LL^2\left([0,T];\L2Pot(\Dom)^\perp\right)$ and $\zeta_\eta \in \LL^2\left([0,T];\LL^2(\Dom;\L2Pot(\Omega)^\perp)\right)$. For simplicity of notation, we again index this subsequence by $\eps$.

\smallskip

Using the convexity of $(\psi_\eta)^*$ (note property~\eqref{vi_sec5} of Lemma~\ref{th:convex_approx}), we deduce from~\eqref{eq:maison} that
\begin{align*}
  & \eta \frac{8 C^2 T \Leb^d(\Dom)}{m_\eta} + \int_0^T\!\!\int_\Omega(\Psi^\eps_{t,\omega})^*\left[-\nabla\Phi^\eps_{t,\omega}[u^\eps_\omega(t)]\right]\dPr(\omega)\dt
  \\
  & \geq \frac{1}{m_\eta} \int_0^T\!\!\int_\Omega\int_\Dom (\psi_\eta)^*\left(\omega,\frac{x}{\eps},\xi^\eps - m_\eta A^\eps D_x\overline{y} \right) - \Big( \sigma^\eps_\eta - m_\eta A^\eps D_x\left(\overline{y}+u^\eps_\omega\right) \Big) \cdot D_x\dot{\overline{y}} \, \dx\dPr(\omega)\dt
  \\
  & \qquad +
  \frac{1}{m_\eta} \int_0^T\!\!\int_\Omega\int_\Dom D_\xi (\psi_\eta)^*\left(\omega,\frac{x}{\eps},\xi^\eps - m_\eta A^\eps D_x\overline{y} \right) \cdot [\sigma^\eps - \xi^\eps - m_\eta A^\eps D_xu^\eps_\omega] \, \dx\dPr(\omega)\dt
\end{align*}
for any $\xi\in\CC^\infty_0\b([0,T];\CC^1_0(\Dom\cup\GNeu;\mathscr{D}^\infty(\Omega))\b)^d$ and where $\dps \xi^\eps(t,\omega,x)= \xi\left(t,T\left(\frac{x}{\eps}\right)\omega,x\right)$. We are now in position to pass to the two--scale limit, in the same manner as used to arrive to~\eqref{eq:maison4}. We first note that $A_0(\omega) \, D_x\dot{\overline{y}}$ is an admissible test function (see e.g. the remarks before Proposition~3.1 of~\cite{BMW94}). Using the fact that both $D_\xi (\psi_{0,\eta})^*\Big(\omega,\xi- m_\eta A_0(\omega) D_x\overline{y} \Big)$ and $A_0(\omega) \, D_\xi (\psi_{0,\eta})^*\Big(\omega,\xi- m_\eta A_0(\omega) D_x\overline{y} \Big)$ are also admissible test functions (recall indeed that $(\psi_{0,\eta})^*$ is $\CC^1$ in its third argument and satisfies~\eqref{eq:bound_d_psi_eta_star}), we obtain that
\begin{align*}
  & \eta \frac{8 C^2 T \Leb^d(\Dom)}{m_\eta} + \liminf_{\eps\to0}\int_0^T\!\!\int_\Omega(\Psi^\eps_{t,\omega})^*\left[-\nabla \Phi^\eps_{t,\omega}[u^\eps_\omega(t)]\right]\dPr(\omega)\dt
  \\
  & \geq \frac{1}{m_\eta} \int_0^T\!\!\int_\Dom \int_\Omega (\psi_{0,\eta})^*\Big(\omega,\xi - m_\eta A_0(\omega) D_x\overline{y} \Big) - \Big( \sigma^\star_\eta+\zeta_\eta - m_\eta A_0(\omega)\b(D_x\overline{y}+D_xu^\star+\theta\b) \Big) \cdot D_x\dot{\overline{y}} \, \dx\dPr(\omega)\dt
  \\
  & \quad +
  \frac{1}{m_\eta} \int_0^T\!\!\int_\Omega\int_\Dom D_\xi (\psi_{0,\eta})^*\Big(\omega,\xi- m_\eta A_0(\omega) D_x\overline{y} \Big) \cdot \left[\sigma^\star_\eta + \zeta_\eta - \xi - m_\eta A_0(\omega)\b(D_xu^\star+\theta\b) \right] \dx\dPr(\omega)\dt.
\end{align*}
Letting $\xi \to \sigma^\star_\eta + \zeta_\eta - m_\eta A_0(\omega)\b(D_xu^\star+\theta\b)$ and using the bound~\eqref{eq:bound_d_psi_eta_star} on $D_\xi(\psi_{0,\eta})^*$, we infer that
\begin{align}
  & \eta \frac{8 C^2 T \Leb^d(\Dom)}{m_\eta} + \liminf_{\eps\to0}\int_0^T\!\!\int_\Omega(\Psi^\eps_{t,\omega})^*\left[-\nabla \Phi^\eps_{t,\omega}[u^\eps_\omega(t)]\right]\dPr(\omega)\dt
  \nonumber
  \\
  & \geq \frac{1}{m_\eta} \int_0^T\!\!\int_\Dom \int_\Omega (\psi_{0,\eta})^*\Big(\omega,\sigma^\star_\eta+\zeta_\eta-m_\eta A_0(\omega)\b[D_x\overline{y}+D_xu^\star+\theta\b] \Big) \, \dx\dPr(\omega)\dt
  \nonumber
  \\
  & \qquad - \frac{1}{m_\eta} \int_0^T\!\!\int_\Dom \int_\Omega \Big( \sigma^\star_\eta+\zeta_\eta-m_\eta A_0(\omega)\b(D_x\overline{y}+D_xu^\star+\theta\b) \Big) \cdot D_x\dot{\overline{y}} \, \dx\dPr(\omega)\dt
  \nonumber
  \\
  & \geq \frac{1}{m_\eta} \int_0^T\!\!\int_\Dom \int_\Omega \psi_0^*\Big(\omega,\sigma^\star_\eta+\zeta_\eta-m_\eta A_0(\omega)\b[D_x\overline{y}+D_xu^\star+\theta\b] \Big) \, \dx\dPr(\omega)\dt
  \nonumber
  \\
  & \qquad - \frac{1}{m_\eta} \int_0^T\!\!\int_\Dom \int_\Omega \Big( \sigma^\star_\eta+\zeta_\eta-m_\eta A_0(\omega)\b(D_x\overline{y}+D_xu^\star+\theta\b) \Big) \cdot D_x\dot{\overline{y}} \, \dx\dPr(\omega)\dt,
  \label{eq:samedi4}
\end{align}
where we have used the bound~\eqref{eq:borne_psi_eta_star} in the last inequality. We are now in position to pass to the limit $\eta = 1/n \to 0$. To that aim, we first establish some bounds on $\sigma^\star_\eta$ and $\zeta_\eta$. Using Proposition~3.5(c) of~\cite{BMW94}, we see that
\begin{equation} \label{eq:samedi2}
\| \sigma^\star_\eta + \zeta_\eta \|_{\LL^2([0,T]\times\Omega\times\Dom)} \leq \liminf_{\eps \to 0} \| \sigma^\eps_\eta \|_{\LL^2([0,T]\times\Omega\times\Dom)} \leq C
\end{equation}
for some $C$ independent of $\eta$. Since $\sigma^\star_\eta$ is deterministic and the expectation of $\zeta_\eta$ vanishes, we can write that
\begin{equation} \label{eq:samedi3}
\| \sigma^\star_\eta \|_{\LL^2([0,T]\times\Dom)} = \left\| \Ex[\sigma^\star_\eta + \zeta_\eta] \right\|_{\LL^2([0,T]\times\Dom)} \leq \| \sigma^\star_\eta + \zeta_\eta \|_{\LL^2([0,T]\times\Omega\times\Dom)}.
\end{equation}
Collecting~\eqref{eq:samedi2} and~\eqref{eq:samedi3}, we see that the sequence $\{ \sigma^\star_\eta \}$ (respectively $\{ \zeta_\eta \}$) is bounded, and thus weakly converges (up to the extraction of a subsequence) to some $\sigma^\star \in \LL^2\left([0,T];\L2Pot(\Dom)^\perp\right)$ (respectively $\zeta \in \LL^2\left([0,T]\times\Dom;\L2Pot(\Omega)^\perp\right)$) in $\LL^2([0,T]\times\Dom)^d$ (respectively $\LL^2([0,T]\times\Omega\times\Dom)^d$).

Using these weak limits, we deduce, by letting $\eta \to 0$ in~\eqref{eq:samedi4}, that
\begin{multline}
  \liminf_{\eps\to0}\int_0^T\!\!\int_\Omega(\Psi^\eps_{t,\omega})^*\b[-\nabla \Phi^\eps_{t,\omega}[u^\eps_\omega(t)]\b]\dPr(\omega)\dt
  \geq \int_0^T\!\!\int_\Dom \int_\Omega \psi^*_0\B(\omega,\sigma^\star+\zeta-A_0(\omega)\b[D_x\overline{y}+D_xu^\star+\theta\b]\B)
  \\
  -\B(\sigma^\star+\zeta-A_0(\omega)\b(D_x\overline{y}+D_xu^\star+\theta\b)\B)\cdot D_x\dot{\overline{y}} \, \dx\dPr(\omega)\dt.
  \label{eq:liminf_term2}
\end{multline}
To pass to the limit in the first term of the right-hand side of~\eqref{eq:samedi4}, we have used the fact that $\psi_0^*$ is convex and lower semicontinuous in its third variable (see Theorem~2.43(i) of~\cite{dacorogna}), the fact that $\psi_0^*$ is bounded from below by $-C$ (a direct consequence of Assumption~$(\psi4)$) and Theorem~3.20 of~\cite{dacorogna}.} \medskip

\noindent
\emph{Step 4. Passing to the limit in the third term of~\eqref{eq:zero_energy}.}
We note that, using~\eqref{eq:nabla_Phi_t} and the chain rule, we have
\begin{align}
  & \int_0^T\!\!\int_\Omega \b\<\nabla\Phi^\eps_{t,\omega}[u^\eps_\omega(t)],\dot{u}^\eps_\omega(t)\b\>_{\HBC}
  \nonumber
  \\
  &=
  \int_0^T\!\!\int_\Omega\int_\Dom A\left(\omega,\frac{x}{\eps}\right) \left(D_x\overline{y}+D_xu^\eps_\omega\right)\cdot\left(D_x\dot{\overline{y}}+D_x\dot{u}^\eps_\omega\right)\dx\dPr(\omega)\dt
  \nonumber
  \\
  & \qquad - \int_0^T\!\!\int_\Omega\int_\Dom A\left(\omega,\frac{x}{\eps}\right) \left(D_x\overline{y}+D_xu^\eps_\omega\right)\cdot D_x\dot{\overline{y}} \, \dx\dPr(\omega)\dt
  \nonumber
  \\
  &=\int_0^T\!\!\int_\Omega\frac{\mathrm{d}}{\dt}\Phi^\eps_{t,\omega}[u^\eps_\omega]\dPr(\omega)\dx
  - \int_0^T\!\!\int_\Omega\int_\Dom A\left(\omega,\frac{x}{\eps}\right) \left(D_x\overline{y}+D_xu^\eps_\omega\right)\cdot D_x\dot{\overline{y}} \, \dx\dPr(\omega)\dt
  \nonumber
  \\
  &=
  \int_\Omega \Big( \Phi^\eps_{T,\omega}[u^\eps_\omega(T)]-\Phi^\eps_{0,\omega}[u^\eps_\omega(0)] \Big) \dPr(\omega)
  - \int_0^T\!\!\int_\Omega\int_\Dom A\left(\omega,\frac{x}{\eps}\right) \left(D_x\overline{y}+D_xu^\eps_\omega\right)\cdot D_x\dot{\overline{y}} \, \dx\dPr(\omega)\dt.
  \label{eq:liminf_term6}
\end{align}
Since $\Phi^\eps_{T,\omega}$ is convex, we can again use the arguments of Steps 2 and 3 \red{(recalling that the sequence selected at the beginning of the proof by applying Lemma~\ref{th:2scale_H1} also two--scale converges at initial and final times, in the sense described in~\cite{BMW94}). Since the function $A_0 \, (D_x\overline{y}(T) + \xi)$ (for any $\xi \in \CC^1_0\b(\Dom\cup\GNeu;\mathscr{D}^\infty(\Omega)\b)^d$) is admissible}, we obtain
\begin{align}
  &\liminf_{\eps\to 0} \int_\Omega\Phi^\eps_{T,\omega}[u^\eps_\omega(T)]\dPr(\omega)
  \nonumber
  \\
  &\geq
  \frac{1}{2} \int_\Omega \int_\Dom A_0(\omega)\B(D_x\overline{y}(T)+D_xu^\star(T)+\theta(T)\B) \cdot\B(D_x\overline{y}(T)+D_xu^\star(T)+\theta(T)\B)\dPr(\omega)\dx
  \nonumber
  \\
  &=
  \int_\Omega \int_\Dom W_0\B(\omega,D_x\overline{y}(T)+D_xu^\star(T)+\theta(T)\B)\dPr(\omega)\dx.
  \label{eq:liminf_term3}
\end{align}
Furthermore, since $A_0 \, D_x\dot{\overline{y}}$ is admissible, we see that
\begin{multline}
\lim_{\eps\to0} \int_0^T\!\!\int_\Omega\int_\Dom A\left(\omega,\frac{x}{\eps}\right) \left(D_x\overline{y}+D_xu^\eps_\omega\right)\cdot D_x\dot{\overline{y}} \dx\dPr(\omega)\dt
\\=
\int_0^T\!\!\int_\Omega \int_\Dom A_0(\omega) \left(D_x\overline{y}+D_xu^\star+\theta\right)\cdot D_x\dot{\overline{y}} \, \dx\dPr(\omega)\dt.
\label{eq:liminf_term5}
\end{multline}
Noting that $u^\eps_\omega(0)=0$ and next using that $\theta(0) = D_xu^\star(0) = 0$ (see Lemma~\ref{th:2scale_H1}), we get that
\begin{align}
\int_\Omega\Phi^\eps_{0,\omega}[u^\eps_\omega(0)]\dPr(\omega)
&=
\frac{1}{2} \int_\Omega \int_\Dom D_x\overline{y}(0)\cdot A\left(\omega,\frac{x}{\eps}\right) \, D_x\overline{y}(0)\dx\dPr(\omega)
\nonumber
\\
&=
\frac{1}{2} \int_\Omega \int_\Dom D_x\overline{y}(0)\cdot A_0(\omega) \, D_x\overline{y}(0)\dx\dPr(\omega)
\nonumber
\\
&=
\int_\Omega \int_\Dom W_0\B(\omega,D_x\overline{y}(0)+D_xu^\star(0)+\theta(0)\B)\dPr(\omega)\dx.
\label{eq:liminf_term4}
\end{align}
Inserting~\eqref{eq:liminf_term3}, \eqref{eq:liminf_term5} and~\eqref{eq:liminf_term4} in~\eqref{eq:liminf_term6}, we obtain that
\begin{align} 
  &\liminf_{\eps\to 0} \int_0^T\!\!\int_\Omega \b\<\nabla\Phi^\eps_{t,\omega}[u^\eps_\omega(t)],\dot{u}^\eps_\omega(t)\b\>_{\HBC}
  \nonumber
  \\
  &\geq
  \int_\Omega \int_\Dom \left( W_0\B(\omega,D_x\overline{y}(T)+D_xu^\star(T)+\theta(T)\B) - W_0\B(\omega,D_x\overline{y}(0)+D_xu^\star(0)+\theta(0)\B)\right) \dPr(\omega)\dx
  \nonumber
  \\
  &\qquad
  - \int_0^T\!\!\int_\Omega \int_\Dom A_0(\omega) \left(D_x\overline{y}+D_xu^\star+\theta\right)\cdot D_x\dot{\overline{y}} \dx\dPr(\omega)\dt
  \nonumber
  \\
  &=
  \int_0^T\!\!\int_\Omega \int_\Dom A_0(\omega) \left(D_x\overline{y}+D_xu^\star+\theta\right)\cdot \left( D_x\dot{u}^\star+\dot{\theta} \right) \dx\dPr(\omega)\dt.
  \label{eq:liminf_term7}
\end{align}

\medskip

\noindent
\emph{Step 5. Conclusion.}
Collecting~\eqref{eq:zero_energy} with the estimates~\eqref{eq:liminf_term1}, \eqref{eq:liminf_term2} and~\eqref{eq:liminf_term7}, we deduce that
\begin{align}
  0 &=
  \liminf_{\eps\to0}\int_0^T\!\!\int_\Omega \left( \Psi^\eps_{t,\omega}[\dot{u}^\eps_\omega(t)]+(\Psi^\eps_{t,\omega})^*\b[-\nabla\Phi^\eps_{t,\omega}[u^\eps_\omega(t)]\b]+\b\<\nabla\Phi^\eps_{t,\omega}[u^\eps_\omega(t)],\dot{u}^\eps_\omega(t)\b\>_{\HBC} \right)\dPr(\omega)\dt
  \nonumber
  \\
  & \geq
  \int_0^T\!\!\int_\Dom\int_\Omega \left\{ \psi_0\B(\omega,D_x\dot{\overline{y}} +D_x\dot{u}^\star +\dot{\theta}\B)+\psi^*_0\B(\omega,\sigma^\star+\zeta-A_0(\omega) [D_x\overline{y}+D_xu^\star+\theta]\B) \right.
  \nonumber
  \\
  & \qquad \qquad \qquad -\B(\sigma^\star+\zeta-A_0(\omega)\b(D_x\overline{y}+D_xu^\star +\theta\b)\B)\cdot D_x\dot{\overline{y}}
  \nonumber
  \\
  & \qquad \qquad \qquad \left. + A_0(\omega)\b(D_x\overline{y}+D_xu^\star+\theta\b) \cdot \left( D_x\dot{u}^\star+\dot{\theta} \right) \right\} \dPr(\omega)\dx\dt.
  \label{eq:Psi*_liminf}
\end{align}
Since $\sigma^\star\in\LL^2\left([0,T];\L2Pot(\Dom)^\perp\right)$, we have $\dps \int_\Dom \sigma^\star \cdot D_x\dot{u}^\star \dx = 0$. Since $\theta\in\HH^1\left([0,T];\LL^2(\Dom;\L2Pot(\Omega))\right)$ and $\zeta\in\LL^2\left([0,T];\LL^2(\Dom;\L2Pot(\Omega)^\perp)\right)$, we also see that $\dps \int_\Omega \zeta \cdot \blue{\dot{\theta}} \dPr(\omega) = 0$. \blue{Moreover, it is a consequence of Lemma~\ref{th:2scale_divfree_L2}, and respectively, Lemma~\ref{th:2scale_H1} and Remark~\ref{rem:evident2} together, that the expectation of $\zeta$ and the expectation of $\dot{\theta}$ vanishes.} It hence follows that
\begin{equation*}
  \int_0^T\!\!\int_\Omega\int_\Dom\b(\sigma^\star+\zeta\b)\cdot\b(D_x\dot{u}^\star+\dot{\theta}\b)\dx\dPr(\omega)\dt=0.
\end{equation*}
We can therefore \red{subtract this quantity from} the right--hand side of~\eqref{eq:Psi*_liminf} to obtain
\begin{multline*}
  0 \geq \int_0^T\!\!\int_\Omega \int_\Dom \left\{ \psi_0\left(\omega,D_x\dot{\overline{y}} + D_x\dot{u}^\star + \dot{\theta} \right) + \psi^*_0\B(\omega,\sigma^\star+\zeta-A_0(\omega) [D_x\overline{y}+D_xu^\star+\theta]\B) \right.
  \\
  \left. -\B(\sigma^\star+\zeta-A_0(\omega)\b(D_x\overline{y} + D_xu^\star + \theta\b)\B)\cdot \left(D_x\dot{\overline{y}}+D_x\dot{u}^\star+\dot{\theta}\right) \right\} \dx\dPr(\omega)\dt.
\end{multline*}
Since $\psi_0(\omega,\xi)+\psi^*_0(\omega,\sigma)\geq\sigma\cdot \xi$ for any two vectors $\xi$ and $\sigma\in\R^d$, we see that the integrand is non-negative for almost every $(t,\omega,x)\in[0,T]\times\Omega\times\Dom$, which therefore implies that
\begin{multline}
  0=\psi_0\left(\omega,D_x\dot{\overline{y}}+D_x\dot{u}^\star+\dot{\theta}\right) +\psi^*_0\B(\omega,\sigma^\star+\zeta-A_0(\omega) [D_x\overline{y}+D_xu^\star+\theta]\B)
  \\
  -\B(\sigma^\star+\zeta-A_0(\omega)\b(D_x\overline{y}+D_xu^\star+\theta\b) \B) \cdot \left(D_x\dot{\overline{y}}+D_x\dot{u}^\star+\dot{\theta}\right).
  \label{eq:limit_energy_equality}
\end{multline}
Using~\eqref{eq:convex_duality_inclusion}, we deduce from the above relation that
$$
\sigma^\star+\zeta-A_0(\omega)\b(D_x\overline{y}+D_xu^\star+\theta\b) \in \partial_\xi \psi_0\left(\omega,D_x\dot{\overline{y}}+D_x\dot{u}^\star+\dot{\theta}\right),
$$
which we can also write as
\begin{equation} \label{eq:matin}
\sigma^\star+\zeta \in \partial_\xi \psi_0\left(\omega,D_x\dot{\overline{y}}+D_x\dot{u}^\star+\dot{\theta}\right) + D_\xi W_0\left(\omega,D_x\overline{y}+D_xu^\star+\theta\right).
\end{equation}
Recall now (see Step 3) that $\sigma^\star\in\LL^2\left([0,T];\L2Pot(\Dom)^\perp\right)$ and $\zeta\in\LL^2\left([0,T];\LL^2(\Dom;\L2Pot(\Omega)^\perp)\right)$ and (see beginning of Step 5) that the expectation of $\zeta$ vanishes. Taking the expectation and next the divergence in $x$ in equation~\eqref{eq:matin}, we obtain~\eqref{eq:homogenised_eqn}. Taking the divergence in $\omega$ in~\eqref{eq:matin}, we obtain~\eqref{eq:corrector_eqn}.

\medskip

We hence have proved our claim stated at the beginning of Section~\ref{sec:conv_subseq} that $u^\star$ and $\theta$ satisfy the system of nonlinear evolutionary inclusions~\eqref{eq:corrector_eqn} and~\eqref{eq:homogenised_eqn}, along with the initial and boundary conditions. \blue{Setting $G^\star(t,\omega,x) = \sigma^\star + \zeta - D_\xi W_0\left(\omega,D_x\overline{y}+D_xu^\star+\theta\right)$, we deduce from~\eqref{eq:matin} that~\eqref{eq:corrector_eqn_bis} and~\eqref{eq:homogenised_eqn_bis} hold.}
 
\subsection{Existence and uniqueness for the limiting evolution}
\label{sec:exist-uniq-limit}

We now define an additional function space and the notation necessary to study~\eqref{eq:corrector_eqn} and~\eqref{eq:homogenised_eqn}. Consider the product space $\H:=\HBC\times\LL^2(\Dom;\L2Pot(\Omega))$. When endowed with the inner product
$$
\forall (u^\star,\theta) \in \H, \quad \forall (v^\star,\xi)\in\H, \qquad 
\b((u^\star,\theta),(v^\star,\xi)\b)_\H := \int_\Omega\int_\Dom D_xu^\star(x)\cdot D_xv^\star(x)+\theta(\omega,x)\cdot\xi(\omega,x)\dx\dPr(\omega),
$$
it is straightforward to check that $\H$ is a Hilbert space (recall that the two ``component'' inner products were defined in Section~\ref{sec:Deformations} and Section~\ref{sec:Correctors}). We note the following fact which we frequently use below:
\begin{equation}
  \forall (u^\star,\theta) \in \H, \quad \forall (v^\star,\xi)\in\H, \qquad 
  \int_\Omega\int_\Dom \b(D_xu^\star+\theta) \cdot(D_x v^\star+\xi) \dPr(\omega)\dx =\b((u^\star,\theta),(v^\star,\xi)\b)_\H.
  \label{eq:decoupling}
\end{equation}
This follows from the fact that the expectation of any function in $\L2Pot(\Omega)$ vanishes (see Remark~\ref{rem:evident2}).

Consider the functionals $\Psi^0_t:\H\to\R$ and $\Phi^0_t:\H\to\R$ defined by
\begin{align*}
  \Psi^0_t(v^\star,\xi)
  &:=\int_\Omega \int_\Dom \psi_0\B(\omega,D_x\dot{\overline{y}}(t)+D_xv^\star+\xi\B)\dx\dPr(\omega),
  \\
  \Phi^0_t(u^\star,\theta)
  &:=\int_\Omega \int_\Dom W_0\B(\omega,D_x\overline{y}(t)+D_xu^\star+\theta\B)\dx\dPr(\omega).
\end{align*}
These functionals are well--defined since assumption~$(A1)$ made in Section~\ref{sec:elasticity_assumptions}, assumption~$(\psi1)$ made in Section~\ref{sec:dissipation_assumptions} and the fact that $\overline{y}\in\HH^1([0,T];\HH^1(\Dom))$ ensure measurability, while assumptions~$(A3)$ and~$(\psi4)$ ensure integrability. The functionals $\Phi^0_t$ and $\Psi^0_t$ are strongly convex on $\H$, a property which they inherit from the strong convexity of $\psi$ and $W$ (see e.g. assumption~$(\psi3)$) and an application of~\eqref{eq:decoupling}.

We may characterise $f\in\partial\Psi^0_t(v^\star,\xi)\subset\H'$ as
\begin{equation}\label{eq:homogenized_subdiff_a}
  \forall (w^\star,\nu)\in\H, \qquad
  \<f,(w^\star,\nu)\>_\H
  =
  \int_\Omega\int_\Dom \left(\sigma^\star+\zeta\right)\cdot(D_xw^\star+\nu)\dx\dPr(\omega)
\end{equation}
where $\sigma^\star\in\LL^2(\Dom)^d$ and $\zeta\in\LL^2(\Dom\times\Omega)^d$ satisfy, for almost every $(\omega,x)\in\Omega\times\Dom$,
\begin{equation}\label{eq:homogenized_subdiff_b}
\sigma^\star(x)+\zeta(\omega,x) \in \partial_\xi\psi_0\b(\omega,D_x\dot{\overline{y}}(t)+D_xv^\star+\xi\b) \quad \text{and} \quad \int_\Omega \zeta(\omega,x)\dPr(\omega)=0.
\end{equation}
Applying~\eqref{eq:subdiff_bnd}, we see that there exists $K>0$ such that
\begin{equation*}
  \|f\|_{\H'} \leq K \B( \Leb^d(\Dom) + \left\|D_x\dot{\overline{y}}(t) \right\|_{\LL^2(\Dom)^d} + \|(v^\star,\xi)\|_\H \B) \qquad \text{for any $f\in\partial\Psi^0_t(v^\star,\xi)$}.
\end{equation*}
Moreover, for any $(u^\star,\theta)\in\H$, we have
\begin{equation} \label{eq:tata}
  \forall (w^\star,\nu)\in\H, \qquad
  \b\<\nabla\Phi^0_t(u^\star,\theta),(w^\star,\nu)\b\>_\H
=
\int_\Omega \int_\Dom A_0(\omega)\left(D_x\overline{y}(t)+D_x u^\star+\theta\right)\cdot \left(D_xw^\star+\nu\right)\dx\dPr(\omega),
\end{equation}
and therefore it is straightforward to show that
\begin{equation*}
  \left\|\nabla\Phi^0_t(u^\star,\theta)\right\|_{\H'} \leq \overline{A} \, \B( \left\|D_x\overline{y}(t)\right\|_{\LL^2(\Dom)^d} + \|(u^\star,\theta)\|_\H \B).
\end{equation*}

\medskip

We understand the equations~\eqref{eq:corrector_eqn} and~\eqref{eq:homogenised_eqn} as the inclusion
\begin{equation}
  \partial \Psi^0_t\left(\dot{u}^\star(t),\dot{\theta}(t)\right) +\nabla\Phi^0_t\left(u^\star(t),\theta(t)\right)\ni0\quad\text{in $\H'$ for almost every $t\in[0,T]$}.
  \label{eq:homogenised_system}
\end{equation}
Indeed, using~\eqref{eq:homogenized_subdiff_a} and~\eqref{eq:tata}, the inclusion~\eqref{eq:homogenised_system} can be recast as: for any $(w^\star,\nu)\in\H$,
\begin{equation} \label{eq:maison_seattle}
\int_\Omega\int_\Dom \Big( \sigma^\star+\zeta+A_0(\omega)\left(D_x\overline{y}(t)+D_x u^\star+\theta\right) \Big) \cdot(D_xw^\star+\nu)\dx\dPr(\omega) = 0
\end{equation}
where, in view of~\eqref{eq:homogenized_subdiff_b}, $\sigma^\star\in\LL^2(\Dom)^d$ and $\zeta\in\LL^2(\Dom\times\Omega)^d$ satisfy, for almost every $(\omega,x)\in\Omega\times\Dom$,
$$
\sigma^\star(x)+\zeta(\omega,x) \in \partial_\xi\psi_0\b(\omega,D_x\dot{\overline{y}}(t)+D_x\dot{u}^\star+\dot{\theta}\b) \quad \text{and} \quad \int_\Omega \zeta(\omega,x)\dPr(\omega)=0.
$$
Taking $\nu \equiv 0$ in~\eqref{eq:maison_seattle}, we obtain~\eqref{eq:homogenised_eqn}. Taking $w^\star \equiv 0$ and $\nu(x,\omega) = \nu_1(x) D_\omega \nu_2(\omega)$ in~\eqref{eq:maison_seattle} (for any $\nu_1 \in \LL^2(\Dom)$ and $\nu_2 \in \HH^1(\Omega)$), we obtain~\eqref{eq:corrector_eqn}. \blue{Conversely, it is clear that~\eqref{eq:homogenised_eqn}--\eqref{eq:corrector_eqn} (in the sense of~\eqref{eq:homogenised_eqn_bis}--\eqref{eq:corrector_eqn_bis}) imply~\eqref{eq:maison_seattle}.}

\medskip

The following lemma demonstrates the existence and uniqueness of solutions to the problem~\eqref{eq:homogenised_system}.
  
\begin{lemma}
\label{th:0_existence}
For any initial condition $(u^\star_0,\theta_0) \in \H$, there exists a unique solution $(u^\star,\theta)\in\HH^1([0,T];\H)$ to~\eqref{eq:homogenised_system} with $u^\star(0,\cdot)=u^\star_0$ and $\theta(0,\cdot,\cdot)=\theta_0$. Moreover, $(u^\star,\theta)$ satisfies
\begin{equation*}
  0 = \int_0^T \Psi^0_t\left(\dot{u}^\star(t),\dot{\theta}(t)\right) + (\Psi^0_t)^*\Big(-\nabla\Phi^0_t(u^\star(t),\theta(t))\Big) + \left\<\nabla\Phi^0_t\left(u^\star(t),\theta(t)\right),\left(\dot{u}^\star(t),\dot{\theta}(t)\right)\right\>\dt.
\end{equation*}
\end{lemma}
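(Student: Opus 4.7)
The plan is to mirror the Banach fixed-point strategy used in Section~\ref{sec:wellposedness} for the $\eps$-problem, working now on the Hilbert space $\H=\HBC\times\LL^2(\Dom;\L2Pot(\Omega))$. The structural properties are the same: $\Psi^0_t$ is strongly convex on $\H$ with convexity constant $c$ (a direct consequence of assumption~$(\psi3)$ together with the decoupling identity~\eqref{eq:decoupling}), and $\Phi^0_t$ is quadratic with Lipschitz gradient, the Lipschitz constant being controlled by $\overline A$ through~\eqref{eq:tata} and~\eqref{eq:decoupling}. So essentially every estimate in Sections~\ref{sec:velocity-operator}--\ref{sec:existence_eps} transposes word for word; the only adjustment is to replace the space $\LL^2(\Omega;\HBC)$ by $\H$ throughout.

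First, I would introduce the velocity operator $\Vel^0_t:\H\to\H$ by setting $\Vel^0_t(u^\star,\theta)$ to be the unique minimizer in $\H$ of
\begin{equation*}
  (w^\star,\nu)\mapsto \mathcal{I}^0_t(w^\star,\nu) := \Psi^0_t(w^\star,\nu) + \b\<\nabla\Phi^0_t(u^\star,\theta),(w^\star,\nu)\b\>_\H,
\end{equation*}
which exists and is unique by the strong convexity of $\Psi^0_t$ and the Direct Method. The Euler--Lagrange condition reads $-\nabla\Phi^0_t(u^\star,\theta)\in\partial\Psi^0_t(\Vel^0_t(u^\star,\theta))$. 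The strong monotonicity of $\partial\Psi^0_t$ (constant $2c$), combined with the Lipschitz bound $\|\nabla\Phi^0_t(u^\star_1,\theta_1)-\nabla\Phi^0_t(u^\star_2,\theta_2)\|_{\H'}\leq \overline A\,\|(u^\star_1,\theta_1)-(u^\star_2,\theta_2)\|_\H$ that follows from~\eqref{eq:tata}, yields
\begin{equation*}
  \b\|\Vel^0_t(u^\star_1,\theta_1)-\Vel^0_t(u^\star_2,\theta_2)\b\|_\H \leq \frac{\overline A}{2c}\,\b\|(u^\star_1,\theta_1)-(u^\star_2,\theta_2)\b\|_\H,
\end{equation*}
uniformly in $t$, exactly as in~\eqref{eq:Vel_Lipschitz}. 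An \emph{a priori} bound of the form
\begin{equation*}
  \|\Vel^0_t(u^\star,\theta)\|_\H \leq C_0\b(1+\|D_x\dot{\overline y}(t)\|_{\LL^2(\Dom)}+\|D_x\overline y(t)\|_{\LL^2(\Dom)}+\|(u^\star,\theta)\|_\H\b)
\end{equation*}
is then obtained by testing the inclusion against $-\Vel^0_t(u^\star,\theta)$ and invoking the lower bound~\eqref{eq:psi_lower_bound} for $\Psi^0_t$ together with assumption~$(\psi4)$, reproducing verbatim the computation leading to~\eqref{eq:apriori_pointwise}.

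Next, I would define, for $0<\tau\leq T$ to be chosen, the mapping $\mathcal{T}^0:\LL^2([0,\tau];\H)\to\LL^2([0,\tau];\H)$ by
\begin{equation*}
  \mathcal{T}^0[(u^\star,\theta)](t) := (u^\star_0,\theta_0) + \int_0^t \Vel^0_s\b((u^\star,\theta)(s)\b)\ds.
\end{equation*}
The \emph{a priori} bound ensures $\mathcal{T}^0$ is well-defined, and the Lipschitz estimate on $\Vel^0_t$ yields
\begin{equation*}
  \|\mathcal{T}^0[(u^\star_1,\theta_1)]-\mathcal{T}^0[(u^\star_2,\theta_2)]\|_{\LL^2([0,\tau];\H)}^2 \leq \frac{\overline A^2\tau^2}{4c^2}\,\|(u^\star_1,\theta_1)-(u^\star_2,\theta_2)\|_{\LL^2([0,\tau];\H)}^2,
\end{equation*}
so $\mathcal{T}^0$ is a contraction for $\tau<2c/\overline A$. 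The Banach fixed-point theorem yields a unique fixed point on $[0,\tau]$; arguing as in Section~\ref{sec:existence_eps}, the fixed point lies in $\HH^1([0,\tau];\H)$, satisfies the initial condition in the continuous representative, and verifies the inclusion~\eqref{eq:homogenised_system} for a.e.\ $t\in[0,\tau]$. Iterating this construction finitely many times covers the whole interval $[0,T]$, yielding a global solution; uniqueness follows directly from the Lipschitz estimate.

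Finally, the energy identity is an immediate consequence of the Fenchel equivalence~\eqref{eq:convex_duality_inclusion} applied pointwise in $t$: for almost every $t\in[0,T]$,
\begin{equation*}
  \Psi^0_t(\dot u^\star(t),\dot\theta(t)) + (\Psi^0_t)^*\b(-\nabla\Phi^0_t(u^\star(t),\theta(t))\b) = \b\<-\nabla\Phi^0_t(u^\star(t),\theta(t)),(\dot u^\star(t),\dot\theta(t))\b\>_\H,
\end{equation*}
and integration over $[0,T]$ gives the claimed identity. I do not anticipate any serious obstacle here; the main point is simply to notice that the product structure of $\H$ combined with the decoupling identity~\eqref{eq:decoupling} allows every estimate from Section~\ref{sec:wellposedness} to be replayed unchanged. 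The only conceptual subtlety is checking that the fixed point of $\mathcal{T}^0$ has a weak time derivative in $\LL^2([0,T];\H)$, which follows as in the $\eps$-case from the pointwise \emph{a priori} bound on $\Vel^0_t$ and a Gr\"onwall argument identical to~\eqref{eq:redite_pre}--\eqref{eq:redite}.
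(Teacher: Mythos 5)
Your proposal follows the paper's own proof essentially word for word: the same velocity operator $\Vel^0_t$ obtained by minimising $\mathcal{I}^0_t$, the same Lipschitz constant $\overline{A}/(2c)$ from strong monotonicity of $\partial\Psi^0_t$, the same \emph{a priori} bound, the same Banach fixed-point argument on $\LL^2([0,\tau];\H)$ with iteration, and the same appeal to the Fenchel equivalence for the integral identity. The only cosmetic difference is that the paper writes out the coercivity estimate for $\mathcal{I}^0_t$ explicitly before invoking existence of a minimiser, whereas you compress this into a reference to the Direct Method.
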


The proof proceeds along similar lines as that of Theorem~\ref{th:eps_existence}.
  
\begin{proof}
For any $(u^\star,\theta) \in \H$, consider the functional $\mathcal{I}^0_t:\H\to\R$ defined by
\begin{equation*}
  \mathcal{I}^0_t(v^\star,\xi)=\Psi^0_t(v^\star,\xi) +\b\<\nabla\Phi^0_t(u^\star,\theta),(v^\star,\xi)\b\>_\H.
\end{equation*}
Using~\eqref{eq:psi_lower_bound} and~\eqref{eq:decoupling}, and applying Young's inequality, we obtain
\begin{align*}
  \mathcal{I}^0_t(v^\star,\xi)
  &\geq c\int_\Omega\int_\Dom\b|D_x\dot{\overline{y}}(t)+D_xv^\star+\xi\b|^2\dx\dPr(\omega)-\b\|\nabla\Phi^0_t(u^\star,\theta)\b\|_{\H'}\b\|(v^\star,\xi)\b\|_\H
  \\
  & = c \int_\Omega\int_\Dom \b|D_x\dot{\overline{y}}(t)+D_xv^\star\b|^2 + |\xi|^2\dx\dPr(\omega) -\b\|\nabla\Phi^0_t(u^\star,\theta)\b\|_{\H'}\b\|(v^\star,\xi)\b\|_\H
  \\
  &\geq c\,\|(v^\star,\xi)\|^2_\H + c\left\|D_x\dot{\overline{y}}(t)\right\|^2_{\LL^2(\Dom)^d} - 2c\int_\Dom|D_x\dot{\overline{y}}(t) \cdot D_xv^\star|\dx-\frac1{c}\b\|\nabla\Phi^0_t(u^\star,\theta)\b\|_{\H'}^2-\frac{c}{4}\b\|(v^\star,\xi)\b\|^2_\H \\
  &\geq \frac{c}{2}\,\|(v^\star,\xi)\|^2_\H - 3c\left\|D_x\dot{\overline{y}}(t)\right\|^2_{\LL^2(\Dom)^d} - \frac1{c}\b\|\nabla\Phi^0_t(u^\star,\theta)\b\|_{\H'}^2.
\end{align*}
It follows that $\mathcal{I}^0_t$ is coercive on $\H$. We have noted above that $\Psi^0_t$ is strongly convex on $\H$, and so is $\mathcal{I}^0_t$. The functional $\mathcal{I}^0_t$ hence admits a unique minimizer in $\H$, denoted $\Vel^0_t(u^\star,\theta)$. Moreover, this minimizer satisfies
\begin{equation*}
  0\in\partial \Psi^0_t\b(\Vel^0_t(u^\star,\theta)\b) +\nabla\Phi^0_t(u^\star,\theta).
\end{equation*} 
Using the fact that $\nabla \Phi^0_t$ is a bounded affine map from $\H$ to $\H'$, we may use the strong convexity of $\Psi^0_t$ to argue as in Section~\ref{sec:velocity-operator}, and deduce (see~\eqref{eq:Vel_Lipschitz}) that
\begin{equation*}
  \forall (u^\star,\theta) \in \H, \quad \forall (v^\star,\xi) \in \H, \qquad \Big\|\Vel^0_t(u^\star,\theta)-\Vel^0_t(v^\star,\xi)\Big\|_\H \leq \frac{\overline{A}}{2c} \, \b\|(u^\star,\theta)-(v^\star,\xi)\b\|_\H,
\end{equation*}
which demonstrates that $\Vel^0_t:\H\to\H$ is a uniformly Lipschitz operator.
  
Next, for any $(u^\star,\theta)\in\LL^2([0,T];\H)$, a similar argument to that given in Section~\ref{sec:a_priori_bound} (see~\eqref{eq:apriori_pointwise}) entails that, for almost every $t\in[0,T]$,
\begin{equation*}
  \B\|\Vel^0_t(u^\star(t),\theta(t))\B\|_\H \leq C_0 \B( 1+ \|D_x\dot{\overline{y}}(t)\|_{\LL^2(\Dom)^d} + \|D_x\overline{y}(t)\b\|_{\LL^2(\Dom)^d} +\b\|(u^\star(t),\theta(t))\b\|_\H\B)
\end{equation*}
for some $C_0$ independent of $t$. Since $\overline{y}\in\HH^1([0,T];\HH^1(\Dom))$, we have that $t \mapsto \Vel^0_t(u^\star(t),\theta(t))$ belongs to $\LL^2([0,T];\H)$.

Finally, an argument identical to that given in Section~\ref{sec:existence_eps} entails the existence and uniqueness of a solution to~\eqref{eq:homogenised_system} in $\HH^1\b([0,T];\H\b)$ with the initial condition $(u^\star_0,\theta_0) \in \H$.

Moreover, \eqref{eq:homogenised_system} is satisfied for almost every $t\in[0,T]$ if and only if
\begin{equation*}
  0 = \int_0^T \Psi^0_t\left(\dot{u}^\star(t),\dot{\theta}(t)\right) + (\Psi^0_t)^*\B(-\nabla\Phi^0_t(u^\star(t),\theta(t))\B) + \left\<\nabla\Phi^0_t\left(u^\star(t),\theta(t)\right),\left(\dot{u}^\star(t),\dot{\theta}(t)\right)\right\>\dt.
\end{equation*}
The proof of this statement follows the same lines as those used in Step 1 of Section~\ref{sec:conv_subseq}, where we recast the evolution problem~\eqref{eq:EvolutionProblem} in the integral form~\eqref{eq:zero_energy}. This concludes the proof of Lemma~\ref{th:0_existence}.
\end{proof}

\subsection{Conclusion of the proof of Theorem~\ref{th:main}}
\label{sec:conc}

We have shown in Section~\ref{sec:conv_subseq} that any subsequence of $u^\eps$ contains a further subsequence which satisfies the conclusions of Lemma~\ref{th:2scale_H1}, where the corresponding limits $u^\star$ and $\theta$ satisfy~\eqref{eq:matin}, and hence~\eqref{eq:homogenised_eqn}--\eqref{eq:corrector_eqn}. We have next shown in Section~\ref{sec:exist-uniq-limit} that~\eqref{eq:homogenised_eqn}--\eqref{eq:corrector_eqn} can be understood as the inclusion~\eqref{eq:homogenised_system}. In view of Lemma~\ref{th:0_existence}, we observe that the solution to~\eqref{eq:homogenised_system} exists and is unique. Since the initial subsequence is arbitrary, and since $\Phi^0_t$ and $\Psi^0_t$ in~\eqref{eq:homogenised_system} do not depend on that subsequence, it follows that the entire sequence $u^\eps$ satisfies two--scale convergence in the sense stated in Lemma~\ref{th:2scale_H1} to the unique solution to~\eqref{eq:homogenised_system}. The proof of Theorem~\ref{th:main} is therefore complete.

\subsection*{Acknowledgements}

\noindent
{\bf Thanks:} The authors would like to thank Felix Otto and Patrick Le~Tallec for informative discussions while carrying out this work. They also thank the anonymous referees whose detailed comments helped to significantly improve this article.
      


\medskip

\noindent
{\bf Conflict of interest:} The authors declare that there is no conflict of interest regarding this work.
    
\bibliographystyle{plain}
\bibliography{michelin}

\def\cprime{$'$}
\begin{thebibliography}{10}

\bibitem{Allaire94}
G.~Allaire.
\newblock Homogenization and two-scale convergence.
\newblock {\em SIAM J. Math. Anal.}, 23(6):1482--1518, 1992.

\bibitem{BJC95}
J.M. Ball, C.~Chu, and R.D. James.
\newblock Hysteresis during stress-induced variant rearrangement.
\newblock {\em J. Phys. IV}, 5(C8):C8--245 -- C8--251, 1995.

\bibitem{BC17}
H.H. Bauschke and P.L. Combettes.
\newblock {\em Convex analysis and monotone operator theory in {H}ilbert
  spaces}.
\newblock CMS Books in Mathematics/Ouvrages de Math\'ematiques de la SMC.
  Springer, Cham, first edition, 2011.

\bibitem{Julia}
J.~Bezanson, A.~Edelman, S.~Karpinski, and V.~Shah.
\newblock Julia: A fresh approach to numerical computing.
\newblock {\em SIAM Review}, 59(1):65--98, 2017.

\bibitem{BMW94}
A.~Bourgeat, A.~Mikeli{\'c}, and S.~Wright.
\newblock Stochastic two-scale convergence in the mean and applications.
\newblock {\em J. Reine Angew. Math.}, 456:19--51, 1994.

\bibitem{B94}
A.~Braides.
\newblock Loss of polyconvexity by homogenization.
\newblock {\em Arch. Rational Mech. Anal.}, 127(2):183--190, 1994.

\bibitem{dacorogna}
B.~Dacorogna.
\newblock {\em Direct methods in the calculus of variations}, volume~78 of {\em
  Applied Mathematical Sciences}.
\newblock Springer, second edition, 2008.

\bibitem{DunfordSchwartzI}
N.~Dunford and J.T. Schwartz.
\newblock {\em Linear {O}perators. {I}. {G}eneral {T}heory}.
\newblock With the assistance of W.G. Bade and R.G. Bartle. Pure and Applied
  Mathematics, Vol. 7. Interscience Publishers, Inc., New York; Interscience
  Publishers, Ltd., London, 1958.

\bibitem{Heida17}
M.~Heida.
\newblock Stochastic homogenization of rate-independent systems and
  applications.
\newblock {\em Continuum Mech. Thermodyn.}, 29(3):853--894, 2017.

\bibitem{HS16}
M.~Heida and B.~Schweizer.
\newblock Non-periodic homogenization of infinitesimal strain plasticity
  equations.
\newblock {\em ZAMM - Journal of Applied Mathematics and Mechanics /
  Zeitschrift für Angewandte Mathematik und Mechanik}, 96(1):5--23, 2016.

\bibitem{JKO94}
V.V. Jikov, S.M. Kozlov, and O.A. Ole{\u\i}nik.
\newblock {\em Homogenization of differential operators and integral
  functionals}.
\newblock Springer-Verlag, Berlin, 1994.

\bibitem{KR98}
M.~Kaliske and H.~Rothert.
\newblock Constitutive approach to rate-independent properties of filled
  elastomers.
\newblock {\em Int. J. Solids Struct.}, 35(17):2057--2071, 1998.

\bibitem{K03}
M.~Kl{\"u}ppel.
\newblock The role of disorder in filler reinforcement of elastomers on various
  length scales.
\newblock In B.~Capella, M.~Geuss, M.~Kl{\"u}ppel, M.~Munz, E.~Schulz, and
  H.~Sturm, editors, {\em Filler-Reinforced Elastomers/Scanning Force
  Microscopy}, pages 1--86. Springer, Berlin, Heidelberg, 2003.

\bibitem{LU00}
A.A. Likhachev and K.~Ullakko.
\newblock Magnetic-field-controlled twin boundaries motion and giant
  magneto-mechanical effects in {Ni--Mn--Ga} shape memory alloy.
\newblock {\em Phys. Lett. A}, 275(1):142--151, 2000.

\bibitem{L96}
A.~Lion.
\newblock A constitutive model for carbon black filled rubber: experimental
  investigations and mathematical representation.
\newblock {\em Continuum Mech. Thermodyn.}, 8(3):153--169, 1996.

\bibitem{MF04}
W.V. Mars and A.~Fatemi.
\newblock Factors that affect the fatigue life of rubber: a literature survey.
\newblock {\em Rubber Chem. Technol.}, 77(3):391--412, 2004.

\bibitem{M07}
B.~Marvalova.
\newblock Viscoelastic properties of filled rubber. {E}xperimental observations
  and material modelling.
\newblock {\em Engineering Mechanics}, 14(1-2):81--89, 2007.

\bibitem{MK00}
C.~Miehe and J.~Keck.
\newblock Superimposed finite elastic--viscoelastic--plastoelastic stress
  response with damage in filled rubbery polymers. {E}xperiments, modelling and
  algorithmic implementation.
\newblock {\em J. Mech. Phys. Solids}, 48(2):323--365, 2000.

\bibitem{MielNotes}
A.~Mielke.
\newblock On evolutionary {$\Gamma$}-convergence for gradient systems.
\newblock In A.~Muntean, J.~Rademacher, and A.~Zagaris, editors, {\em
  Macroscopic and Large Scale Phenomena: Coarse Graining, Mean Field Limits and
  Ergodicity}, Lecture Notes in Applied Mathematics and Mechanics, pages
  187--249. Springer, 2015.

\bibitem{N89}
G.~Nguetseng.
\newblock A general convergence result for a functional related to the theory
  of homogenization.
\newblock {\em SIAM J. Math. Anal.}, 20(3):608--623, 1989.

\bibitem{Rockafellar}
R.T. Rockafellar.
\newblock {\em Convex analysis}, volume~28 of {\em Princeton Mathematical
  Series}.
\newblock Princeton University Press, 1970.

\bibitem{Roubicek}
T.~Roub{\'{\i}}{\v{c}}ek.
\newblock {\em Nonlinear partial differential equations with applications},
  volume 153 of {\em International Series of Numerical Mathematics}.
\newblock Birkhäuser, second edition, 2013.

\bibitem{T90}
L.~Tartar.
\newblock Memory effects and homogenization.
\newblock {\em Arch. Rational Mech. Anal.}, 111(2):121--133, 1990.

\bibitem{Treloar}
L.R.G. Treloar.
\newblock {\em The physics of rubber elasticity}.
\newblock Oxford University Press, USA, 1975.

\bibitem{TN04}
C.~Truesdell and W.~Noll.
\newblock {\em The non-linear field theories of mechanics}.
\newblock Springer-Verlag, Berlin, third edition, 2004.

\bibitem{Visintin}
A.~Visintin.
\newblock {\em Differential models of hysteresis}, volume 111 of {\em Applied
  Mathematical Sciences}.
\newblock Springer-Verlag, Berlin, 1994.

\bibitem{ZP06}
V.V. Zhikov and A.L. Pyatnitskii.
\newblock Homogenization of random singular structures and random measures.
\newblock {\em Izvestiya: Mathematics}, 70(1):19, 2006.

\end{thebibliography}
\end{document}